\newtheorem{theorem}{\bf Theorem}[section]
\newtheorem{lemma}[theorem]{\bf Lemma}
\newtheorem{conjecture}[theorem]{\bf Conjecture}
\numberwithin{equation}{section}
\begin{document}
\title{{\Large Ramsey numbers of $5$-uniform loose  cycles}}

\author{ M. Shahsiah$^{\textrm{a},\textrm{b}}$ \\[2pt]
{\small $^{\textrm{a}}$Department of Mathematics, University of Khansar, Khansar, 87916-85163, Iran}\\[2pt]
{\small $^{\textrm{b}}$School of Mathematics, Institute for Research in Fundamental Sciences (IPM),}\\
{\small P.O. Box 19395-5746, Tehran, Iran }\\[2pt]
{shahsiah@ipm.ir}}

\date{}

\maketitle

\begin{abstract}
Gy\'{a}rf\'{a}s et al. determined the asymptotic value of the diagonal
 Ramsey number of  $\mathcal{C}^k_n$, $R(\mathcal{C}^k_n,\mathcal{C}^k_n),$  generating the same result for $k=3$ due to Haxell et al. Recently, the exact values of the Ramsey numbers of 3-uniform loose paths and cycles are completely determined. These results are motivations to conjecture that
for every $n\geq m\geq 3$ and $k\geq 3,$
$$R(\mathcal{C}^k_n,\mathcal{C}^k_m)=(k-1)n+\lfloor\frac{m-1}{2}\rfloor,$$
as mentioned by Omidi et al.
More recently, it is shown that this conjecture is true   for $n=m\geq 2$ and $k\geq 7$ and for $k=4$  when $n>m$ or $n=m$ is odd. Here we investigate this conjecture for $k=5$ and demonstrate that it holds for $k=5$ and  sufficiently large $n$.
%
%  We also demonstrate that
% this holds for $k=5,$ where $n\geq
%\lfloor\frac{3m}{2}\rfloor$.

\noindent{\small { Keywords:} Ramsey number,  Loose path, Loose cycle.}\\
{\small AMS subject classification: 05C65, 05C55, 05D10.}

\end{abstract}

%%%%%%%%%%%%%%%%%%%%%%%%%%%%%%%%%%%%%%%%%%%%%%%%%%%%%%%%%%%%%%%%%%%%%%%%
\section{\normalsize Introduction}

\bigskip
For two $k$-uniform hypergraphs  $\mathcal{G}$ and
$\mathcal{H},$
  the \textit{Ramsey number} $R(\mathcal{G},\mathcal{H})$ is
  the smallest  integer $N$ such that in every red-blue coloring of the
  edges of the complete $k$-uniform hypergraph $\mathcal{K}^k_N$ on $N$ vertices,  there is  a monochromatic copy of $\mathcal{G}$ in color red  or a monochromatic  copy of $\mathcal{H}$ in color blue.
 A {\it $k$-uniform  loose cycle} (shortly, a {\it cycle of length $n$}), denoted by  $\mathcal{C}_n^k,$   is a hypergraph with vertex set
$\{v_1,v_2,\ldots,v_{n(k-1)}\}$ and with the set of $n$ edges
$e_i=\{v_{(i-1)(k-1)+1},v_{(i-1)(k-1)+2},\ldots, v_{(i-1)(k-1)+k}\}$, $1\leq i\leq n$, where
we use mod $n(k-1)$ arithmetic.
%and adding a number $t$ to a set
%$H=\{v_1,v_2,\ldots, v_k\}$ means a shift, i.e. the set obtained
%by adding $t$ to subscripts of each element of $H$.
Similarly, a
{\it $k$-uniform  loose path} (shortly, a {\it path of length $n$}), denoted by  $\mathcal{P}_n^k,$  is a hypergraph with vertex set
$\{v_1,v_2,\ldots,v_{n(k-1)+1}\}$  and with the set of $n$ edges
$e_i=\{v_{(i-1)(k-1)+1},v_{(i-1)(k-1)+2},\ldots, v_{(i-1)(k-1)+k}\}$, $1\leq i\leq n$. For
an edge $e_i=\{v_{(i-1)(k-1)+1},v_{(i-1)(k-1)+2},\ldots, v_{i(k-1)+1}\}$ of a given loose path (also a given loose cycle)
$\mathcal{K}$, we denote by $f_{\mathcal{K},e_i}$ and $l_{\mathcal{K},e_i}$ the first  vertex  ($v_{(i-1)(k-1)+1}$) and the last vertex ($v_{i(k-1)+1}$) of $e_i,$ respectively.\\

%
% the first vertex ($v_{(i-1)(k-1)+1}$) and the last vertex ($v_{i(k-1)+1}$) are denoted by
%$f_{\mathcal{K},e_i}$ and
%$l_{\mathcal{K},e_i}$, respectively.\\
%In
%this paper, we consider the problem of finding the 2-color Ramsey
%number of $k$-uniform loose paths and cycles for
% $k=4,5$.\\
%For $k=2$ we get the usual definitions of a cycle $C_n$ and a path $P_n$ with $n$
%edges.\\

The problem  of determining or estimating Ramsey numbers is one of
the central problems in   combinatorics which has been of
interest to many investigators. In contrast to the graph case, there are relatively few results on the hypergraph Ramsey numbers.
%not much is on the Ramsey numbers of hypergraphs.
%Recently, this topic has received considerable attention.
The  Ramsey numbers of hypergraph loose
cycles were first considered
 by  Haxell et al. \cite{Ramsy number of
loose cycle}. They showed that
$R(\mathcal{C}^3_n,\mathcal{C}^3_n)$ is asymptotically
$\frac{5}{2}n$.
%More
%precisely, they proved that for all $\eta>0$ there exists
%$n_0=n_0(\eta)$ such that for every $n> n_0$, every $2$-coloring
%of $\mathcal{K}^{3}_{ 5(1+\eta)n/2}$ contains a monochromatic copy
%of $C_n^3$.
 Gy\'{a}rf\'{a}s et al.
%  S\'{a}rk\"{o}zy and Szemer\'{e}di
  \cite{Ramsy number of loose cycle for k-uniform} generalized this result  to
$k$-uniform loose cycles and showed that for $k\geq 3,$ $R(\mathcal{C}^k_n,\mathcal{C}^k_n)$ is asymptotic to $\frac{1}{2}(2k-1)n.$
%
% More precisely, they proved that for all
%$\eta>0$ there exists $n_0=n_0(\eta)$ such that for every $n> n_0,$ every 2-coloring
%of $\mathcal{K}^{k}_N$ with $N=(1+\eta)\frac{1}{2}(2k-1)n$
%contains a monochromatic copy of $\mathcal{C}^k_n.$\\

The investigation of the exact values of hypergraph loose paths and cycles was initiated by Gy\'{a}rf\'{a}s et al. \cite{subm},
%The first results on the exact values of loose paths and cycles were obtained  by Gy\'{a}rf\'{a}s and Raeisi \cite{subm},
 who   determined the exact
 values of
the Ramsey numbers of two $k$-uniform loose triangles and quadrangles. Recently, in \cite{The Ramsey number of loose paths  and loose cycles
in 3-uniform hypergraphs}, the authors completely determined the exact values of the Ramsey numbers of 3-uniform loose paths and cycles. More precisely,
they showed the following.
%Recently some interesting results were obtained on the exact values of the Ramsey numbers of loose paths and cycles of arbitrary lengths.
 %Gy\'{a}rf\'{a}s and Raeisi \cite{subm}  determined the exact
% values of
%the Ramsey numbers of two $k$-uniform loose triangles and quadrangles.
 %In \cite{The Ramsey number of loose paths  and loose cycles
%in 3-uniform hypergraphs}, the authors  proved  the following general result on the Ramsey number of $3$-uniform loose paths and cycles.

\begin{theorem}{\rm \cite{The Ramsey number of loose paths  and loose cycles
in 3-uniform hypergraphs}}\label{Omidi} For every $n\geq m\geq 3,$
\begin{eqnarray*}\label{5}
R(\mathcal{P}^3_n,\mathcal{P}^3_m)=R(\mathcal{P}^3_n,\mathcal{C}^3_m)=R(\mathcal{C}^3_n,\mathcal{C}^3_m)+1=2n+\Big\lfloor\frac{m+1}{2}\Big\rfloor.
\end{eqnarray*}
Moreover, for $n>m\geq 3,$ we have
\begin{eqnarray*}\label{5}
R(\mathcal{P}^3_m,\mathcal{C}^3_n)=2n+\Big\lfloor\frac{m-1}{2}\Big\rfloor.
\end{eqnarray*}
\end{theorem}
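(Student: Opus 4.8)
The plan is to prove each equality by matching lower and upper bounds; throughout I write $N$ for the conjectured value of the relevant Ramsey number, and I use that a $3$-uniform loose path of length $\ell$ has $2\ell+1$ vertices while a loose cycle of length $\ell$ has $2\ell$ vertices. Every lower bound comes from one family of colorings. I would partition the vertices of $\mathcal{K}^3_{N-1}$ as $V=X\sqcup Y$ and color an edge with the first color if it lies entirely inside $X$ and with the second color if it meets $Y$. A first-color structure then lives inside $X$, so it is absent once $|X|\leq 2p$ (to block a path of length $p$) or $|X|\leq 2p-1$ (to block a cycle of length $p$). A second-color structure has every edge meeting $Y$; since in a loose path or loose cycle each vertex lies in at most two edges, covering all $q$ edges forces $|Y|\geq\lceil q/2\rceil$, so both a path and a cycle of length $q$ are blocked as soon as $|Y|\leq\lfloor\frac{q-1}{2}\rfloor$. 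This covering bound is exactly what produces the floor in the final answer.

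Feeding in the right sizes gives all four lower bounds. For $R(\mathcal{P}^3_n,\mathcal{P}^3_m)$ and $R(\mathcal{P}^3_n,\mathcal{C}^3_m)$ I take red $=$ inside $X$ with $|X|=2n$ (blocking a red $\mathcal{P}^3_n$) and blue $=$ meeting $Y$ with $|Y|=\lfloor\frac{m-1}{2}\rfloor$ (blocking both a blue $\mathcal{P}^3_m$ and a blue $\mathcal{C}^3_m$), for a total of $2n+\lfloor\frac{m-1}{2}\rfloor=N-1$ with $N=2n+\lfloor\frac{m+1}{2}\rfloor$. For $R(\mathcal{C}^3_n,\mathcal{C}^3_m)$ I shrink to $|X|=2n-1$, since a red $\mathcal{C}^3_n$ needs $2n$ vertices, which lowers the total by one and explains the ``$+1$''. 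For the moreover statement $R(\mathcal{P}^3_m,\mathcal{C}^3_n)$ with $n>m$ I swap the roles, making blue $=$ inside $X$ with $|X|=2n-1$ (blocking the large blue $\mathcal{C}^3_n$) and red $=$ meeting $Y$ with $|Y|=\lfloor\frac{m-1}{2}\rfloor$ (blocking the small red $\mathcal{P}^3_m$).

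For the upper bounds I would induct on $m$, taking the loose-triangle and loose-quadrangle Ramsey numbers of Gy\'arf\'as et al.\ as the two base cases. Fix a red/blue coloring of $\mathcal{K}^3_N$ and assume there is no red copy of the first target. Because $\lfloor\frac{m+1}{2}\rfloor-\lfloor\frac{m-1}{2}\rfloor=1$, deleting a single vertex leaves exactly the Ramsey number for the parameter $m-2$, so the induction hypothesis yields a blue loose path (or cycle) of length $m-2$. The core is then an extension step: using the roughly $2n$ vertices not yet on this structure, together with the hypothesis that no red $\mathcal{P}^3_n$ exists, I would attach two further blue edges at the ends, choosing the new link vertices so that looseness is preserved, to reach length $m$. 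To pass from the path statement to $R(\mathcal{P}^3_n,\mathcal{C}^3_m)$ I would close a blue $\mathcal{P}^3_m$ into a blue $\mathcal{C}^3_m$ by joining its two end-link vertices through a spare vertex with a blue edge, again arguing that the relevant connecting edges cannot all be red without assembling a red $\mathcal{P}^3_n$. The reverse inequality $R(\mathcal{P}^3_m,\mathcal{C}^3_n)\leq 2n+\lfloor\frac{m-1}{2}\rfloor$ is handled symmetrically: assuming no short red $\mathcal{P}^3_m$ makes the red edges too sparse to obstruct, and since $n>m$ there is room to build a near-spanning blue loose path and close it into a blue $\mathcal{C}^3_n$.

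The hard part will be the extension/connecting step and the exact constant it must produce. Matching $\lfloor\frac{m+1}{2}\rfloor$ from above requires that the extension add precisely the right number of vertices and respect the parity hidden in the floor, so an unavoidable case analysis of the end-edges of the longest monochromatic path—according to whether their link vertices are reused—is where the genuine difficulty lies, rather than in any global count. Equally delicate is reconciling the path and cycle statements: one must check that closing a path into a cycle never costs an extra vertex, which is exactly why $R(\mathcal{C}^3_n,\mathcal{C}^3_m)$ comes out one smaller than the common path value instead of equal to it. I expect controlling these end-edge configurations to be the main obstacle.
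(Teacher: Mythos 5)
This statement is not proved in the paper at all: it is Theorem~\ref{Omidi}, quoted verbatim from the reference of Omidi and Shahsiah on $3$-uniform loose paths and cycles, and the present paper only uses it as motivation for Conjectures~\ref{our conjecture} and~\ref{our conjecture2}. So there is no in-paper proof to compare against; I can only assess your proposal on its own terms and against the strategy of the cited source.

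Your lower-bound constructions are correct and are exactly the standard ones (they are the content of Lemma~1 of Gy\'arf\'as--Raeisi, which this paper itself invokes to dispense with lower bounds): the partition coloring with $|X|=2n$ or $2n-1$ and $|Y|=\lfloor\frac{m-1}{2}\rfloor$, together with the covering bound $|Y|\geq\lceil q/2\rceil$ coming from the fact that each vertex of a loose path or cycle lies in at most two edges, gives all four values $N-1$ as you compute. The genuine gap is the upper bound. Your outline (induct on $m$ in steps of two, obtain a blue $\mathcal{P}^3_{m-2}$ from the induction hypothesis after deleting a vertex, then ``attach two further blue edges at the ends'') names the right arithmetic, since $\lfloor\frac{m+1}{2}\rfloor-\lfloor\frac{m-1}{2}\rfloor=1$, but the attachment step \emph{is} the theorem, and you leave it entirely unargued. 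In the cited work, and in the analogous arguments of the present paper (Lemmas~\ref{spacial configuration2}--\ref{there is a Pl}), this step requires a careful maximality argument: one takes a maximal monochromatic path with respect to the leftover vertex set $W$ and extracts monochromatic configurations of the opposite color whose end vertices lie in $W$ and which avoid prescribed link vertices, so that consecutive configurations can be concatenated without violating looseness; the case analysis over which end vertices are reusable, and the parity bookkeeping that makes the count come out to exactly $\lfloor\frac{m+1}{2}\rfloor$, is where all the work lives. You correctly identify this as ``the main obstacle,'' but identifying an obstacle is not the same as overcoming it, so as written the proposal establishes only the lower bounds.
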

Regarding Ramsey numbers of $k$-uniform loose paths and cycles for $k\geq 3,$
 %They also posed the following conjecture in \cite{Ramsey numbers of loose cycles in uniform hypergraphs}, as mentioned in \cite{subm} by Gyarfas et al.
 in \cite{Ramsey numbers of loose cycles in uniform hypergraphs} the authors posed the following conjecture, as mentioned also in \cite{subm}.
%In \cite{Ramsey numbers of loose cycles in uniform hypergraphs},
%we presented another proof of Theorem \ref{Omidi} and posed the
%following conjecture.
%This gives a positive answer to a question of Gy\'{a}rf\'{a}s and
%Raeisi (see \cite{subm}).  Moreover, it has been shown that
%$R(\mathcal{P}^3_m,\mathcal{C}^3_n)=2n+\lfloor\frac{m-1}{2}\rfloor$
%for any  $n>m\geq 3$. Based on the above results we pose the following conjecture
%on the  value of the Ramsey number of loose cycles.
%%We believe that this lower bound is essentially the same as the
%upper bound. So we have the following Conjecture:

\begin{conjecture}\label{our conjecture}
Let $k\geq 3$ be an integer number. For every $n\geq m \geq 3$,
\begin{eqnarray*}\label{2}
R(\mathcal{P}^k_{n},\mathcal{P}^k_{m})=R(\mathcal{P}^k_{n},\mathcal{C}^k_{m})=R(\mathcal{C}^k_n,\mathcal{C}^k_m)+1=(k-1)n+\lfloor\frac{m+1}{2}\rfloor.
\end{eqnarray*}
\end{conjecture}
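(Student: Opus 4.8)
The plan is to prove matching lower and upper bounds for each of the three quantities, since the lower bounds are elementary and already pin down the conjectured values, leaving the upper bounds as the real content. For the lower bounds I would use the standard two-block coloring: partition the vertex set of $\mathcal{K}^k_N$ into blocks $A$ and $B$, color an edge red exactly when it lies entirely inside $A$, and blue otherwise, so that every blue edge meets $B$. If $|A|$ is one less than the number of vertices of the red target, no red copy fits inside $A$; and since in a loose path or cycle every vertex lies in at most two edges, at most $2|B|$ edges of a blue copy can meet $B$, so taking $|B|=\lfloor (m-1)/2\rfloor$ forbids a blue copy with $m$ edges. Choosing $|A|=(k-1)n-1$ for a red cycle and $|A|=(k-1)n$ for a red path yields exactly the three stated values, and explains the $+1$ gap between the cycle-cycle and path-path numbers: blocking a red path costs one more vertex than blocking a red cycle, namely the single vertex by which $\mathcal{P}^k_n$ exceeds $\mathcal{C}^k_n$.

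For the upper bounds I would argue by induction, taking the triangle and quadrilateral cases of Gy\'arf\'as et al. as the base and reducing $(n,m)$ to a smaller pair. The core is a structural lemma: in any red-blue coloring of $\mathcal{K}^k_N$ with $N$ equal to the claimed value, consider a longest red loose path (or cycle) $P$; assuming it is shorter than the red target, analyze the blue edges among the endpoints of $P$ and the vertices outside $P$. Because a single vertex can be a link of at most two edges of a loose structure, the maximality of $P$ forces a dense blue pattern on the leftover vertices, from which a blue path or cycle of the required length can be extracted, or else $P$ can be rotated and extended, contradicting maximality. The three equalities would then follow because the same analysis produces, in the leftover blue hypergraph, either a long blue path or a long blue cycle as needed, while the lower bounds certify that the path and cycle numbers differ by exactly one.

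The main obstacle is the exact upper bound, and specifically matching the floor $\lfloor (m\pm 1)/2\rfloor$ and the closing-up of cycles (the $\pm 1$) without slack. The counting ``at most $2|B|$ edges meet $B$'' is tight only when the blue copy uses every available link optimally, so the extension argument must track parity and the precise overlap pattern of consecutive edges, which forces a somewhat delicate case analysis near the ends of the extremal path; closing a blue path into a blue cycle, in particular, must be shown to cost exactly the one extra vertex recorded by the floor term. This is also where the hypothesis that $n$ be sufficiently large enters: one needs enough leftover room to guarantee the rotation/extension step, and for $k=5$ one can additionally lean on the ballpark given by the asymptotic value of Gy\'arf\'as et al.\ to confine the extremal configuration close to the lower-bound coloring, so that the remaining gap can be closed by a finite, hands-on argument.
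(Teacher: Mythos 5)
The statement you are asked about is a \emph{conjecture}, not a theorem: the paper does not prove it, and it is open in general. What the paper actually establishes is a special case of the equivalent cycle--cycle form (Conjecture 1.3), namely $R(\mathcal{C}^5_n,\mathcal{C}^5_m)=4n+\lfloor\frac{m-1}{2}\rfloor$ for $k=5$ and $n\geq\lfloor\frac{3m}{2}\rfloor$, on top of previously known cases ($k=3$; $n=m$ with $k\geq 7$; $k=4$ with $n>m$ or $n=m$ odd). Your proposal, read as a proof of the full statement for all $k\geq 3$ and all $n\geq m\geq 3$, therefore cannot be correct as it stands, and indeed it is not a proof but a plan. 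The lower-bound half is fine: the two-block coloring you describe is exactly the standard construction (Lemma 1 of Gy\'arf\'as--Raeisi, cited in the paper), and your accounting of why a red path costs one more vertex to block than a red cycle correctly explains the $+1$ and the floor terms. But the entire content of the problem lives in the upper bound, and there your text only names the difficulty (``a somewhat delicate case analysis near the ends of the extremal path,'' tracking parity, closing a path into a cycle at the cost of exactly one vertex) without carrying it out. Asserting that maximality of a longest red path ``forces a dense blue pattern'' from which the blue target ``can be extracted'' is precisely the step that requires the paper's heavy machinery --- the $\varpi_S$-configurations of Lemma 2.2, the connection Lemma 2.4, and the long case analyses of Lemmas 3.1--3.3 --- and even with that machinery the paper only closes the argument for $k=5$ under the restriction $n\geq\lfloor\frac{3m}{2}\rfloor$, which enters for reasons quite different from your ``enough leftover room'' heuristic (it is needed to make the edge counts $\ell'=\frac{2}{3}(n-2-r)$ versus $2\lfloor\frac{m-1}{2}\rfloor-2|T|$ rule out bad cases in Lemma 3.3).

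Two further points. First, your suggestion to lean on the asymptotic result of Gy\'arf\'as, S\'ark\"ozy and Szemer\'edi to ``confine the extremal configuration close to the lower-bound coloring'' is a stability strategy that the present paper does not use; it builds the monochromatic cycle directly and constructively, and an asymptotic bound cannot by itself yield an exact Ramsey number without a genuine stability theorem, which you would have to prove. Second, the chain of three equalities does not come for free from ``the same analysis produces either a path or a cycle'': the paper relies on a separately proved equivalence between the path/cycle version (Conjecture 1.2) and the pure cycle version (Conjecture 1.3), established in an earlier paper of Omidi and Shahsiah. If you want a provable statement to aim at, restrict to $k=5$, $n\geq\lfloor\frac{3m}{2}\rfloor$ and follow the induction on $m+n$ with base $R(\mathcal{C}^5_3,\mathcal{C}^5_n)=4n+1$; that is Theorem 3.5 of the paper, and it is the most that the present methods deliver.
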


%The Case $k=3$ follows from Theorem \ref{Omidi}. The following lemma shows that the right hand side of
%(\ref{2}) is a lower bound for the Ramsey number of loose cycles.
% A similar Lemma is proved in \cite{subm} for $k$-uniform hypergraphs. But, for the sake of
% completeness, we include the proof here.
%\noindent Indeed, we showed that the conjecture is equivalent to
%have only the last equality.
%\noindent Also,  the following theorem is obtained on the Ramsey number of loose paths and cycles in $k$-uniform hypergraphs  \cite{Ramsey numbers of loose cycles in uniform
%hypergraphs}.
%
%
%\begin{theorem}{\rm \cite{Ramsey numbers of loose cycles in uniform hypergraphs}}\label{connection}
%Let $n\geq m\geq 2$ be given integers and
%$R(\mathcal{C}^k_{n},\mathcal{C}^k_{m})=(k-1)n+\lfloor\frac{m-1}{2}\rfloor.$
%Then
%$R(\mathcal{P}^k_n,\mathcal{C}^k_m)=(k-1)n+\lfloor\frac{m+1}{2}\rfloor$
%and
%$R(\mathcal{P}^k_n,\mathcal{P}^k_{m-1})=(k-1)n+\lfloor\frac{m}{2}\rfloor$.
%Moreover, for $n=m$ we have
%$R(\mathcal{P}^k_n,\mathcal{P}^k_m)=(k-1)n+\lfloor\frac{m+1}{2}\rfloor$.
%\end{theorem}

\noindent  They also showed that    Conjecture \ref{our conjecture} is equivalent to the following.

\begin{conjecture}\label{our conjecture2}
Let $k\geq 3$ be an integer number. For every $n\geq m \geq 3$,
\begin{eqnarray*}\label{2}
R(\mathcal{C}^k_n,\mathcal{C}^k_m)=(k-1)n+\lfloor\frac{m-1}{2}\rfloor.
\end{eqnarray*}
\end{conjecture}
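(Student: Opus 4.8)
The plan is to prove the two matching inequalities separately, writing $N=(k-1)n+\lfloor\frac{m-1}{2}\rfloor$ throughout. The lower bound is elementary and holds for all $n\ge m\ge3$ and $k\ge3$; the upper bound is the real content, and it is the part that is currently established only in special regimes, so my proposal for it is a strategy together with an honest identification of where it breaks down in general.

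For the lower bound $R(\mathcal{C}^k_n,\mathcal{C}^k_m)\ge N$, I would exhibit a red-blue coloring of $\mathcal{K}^k_{N-1}$ with no red $\mathcal{C}^k_n$ and no blue $\mathcal{C}^k_m$. Partition the $N-1$ vertices as $A\cup B$ with $|A|=(k-1)n-1$ and $|B|=\lfloor\frac{m-1}{2}\rfloor$ (so $|A|+|B|=N-1$), and color an edge $e$ red exactly when $e\subseteq A$, and blue otherwise, i.e. when $e\cap B\ne\emptyset$. Every red subhypergraph then lives inside $A$, so a red loose cycle uses at most $|A|=(k-1)n-1<(k-1)n$ vertices and cannot have length $n$. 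For the blue side, recall that in any loose cycle each vertex lies in at most two of its edges; hence a blue $\mathcal{C}^k_m$, each of whose $m$ edges meets $B$, would force $2|B|\ge m$, contradicting $|B|=\lfloor\frac{m-1}{2}\rfloor<\frac m2$. This settles the lower bound.

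The substance is the upper bound: every red-blue coloring of $\mathcal{K}^k_N$ contains a red $\mathcal{C}^k_n$ or a blue $\mathcal{C}^k_m$. Since the excerpt records that the path form (Conjecture 1.3) and the cycle form (Conjecture 1.4) are equivalent, I would pass freely between monochromatic loose paths and loose cycles, using the fact that a red $\mathcal{P}^k_{n-1}$ closes into a red $\mathcal{C}^k_n$ by adding one red edge through its two endpoints and $k-2$ fresh internal vertices. The proposed route is a double induction, on $m$ with an inner induction on $n$, anchored at $m=3$ by Theorem 1.1 for $k=3$ and by the loose-triangle/quadrangle base cases of Gy\'arf\'as et al. for general $k$. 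The engine of the step would be a stability lemma of the following shape: in a coloring with no blue $\mathcal{C}^k_m$, a longest red loose path $P$ has length at least $n-1$; when it has exactly this length it occupies $(k-1)(n-1)+1$ vertices, leaving $k-2+\lfloor\frac{m-1}{2}\rfloor$ vertices outside. One then argues that these leftover vertices, together with the two endpoints of $P$, are forced by the extremality of $P$ and the absence of a blue $\mathcal{C}^k_m$ into a rigid color pattern, which either supplies a red closing edge through $k-2$ unused vertices (yielding a red $\mathcal{C}^k_n$) or concentrates enough blue edges on the leftover vertices and the link vertices of $P$ to assemble a blue $\mathcal{C}^k_m$, completing the induction.

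The hard part will be carrying out this stability analysis \emph{exactly}, uniformly across all $k\ge3$ and both parities of $m$. The connected-matching and regularity method that yields the asymptotic value $\frac12(2k-1)n$ of Gy\'arf\'as et al. loses the additive term $\lfloor\frac{m-1}{2}\rfloor$ and so cannot detect the precise threshold; one genuinely needs tight control of how a near-extremal monochromatic path meets the final $\lfloor\frac{m-1}{2}\rfloor$ vertices, and this is acutely sensitive both to the parity of $m$ and to the uniformity $k$. That sensitivity is exactly why the statement is presently known only when $k=3$ (Theorem 1.1), in the diagonal case $n=m$ for $k\ge7$, for $k=4$ with $n>m$ or $n=m$ odd, and, in the present paper, for $k=5$ with $n$ large. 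Making the extremal argument rigid enough to close in all small and diagonal even cases — where the near-extremal configurations are least constrained, so the forced color pattern is weakest — is the principal obstacle, and it is the reason the conjecture remains open in full.
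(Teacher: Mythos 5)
Your lower bound is correct and is exactly the standard construction the paper relies on (it invokes Lemma~1 of Gy\'arf\'as--Raeisi for this): the partition $A\cup B$ with $|A|=(k-1)n-1$, $|B|=\lfloor\frac{m-1}{2}\rfloor$, red edges inside $A$, blue edges meeting $B$, and the counting argument that a blue $\mathcal{C}^k_m$ would force $m\le 2|B|\le m-1$ is sound. But the statement you were asked to prove is the upper bound as well, and there you have not given a proof: what you offer is a program (double induction anchored at $m=3$, a ``stability lemma'' asserting that a longest red loose path has length at least $n-1$ and that near-extremal configurations are color-rigid) together with an explicit admission that the program cannot currently be carried out. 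That admission is accurate, but it means the proposal has a genuine gap, and the gap is precisely the entire content of the statement: the stability lemma you posit is unproven and is not a routine step --- proving any version of it is what all the cited papers do, case by case, with heavy technical machinery. A smaller but concrete error inside the sketch: you claim a red $\mathcal{P}^k_{n-1}$ ``closes into a red $\mathcal{C}^k_n$ by adding one red edge through its two endpoints and $k-2$ fresh internal vertices''; nothing forces that closing edge to be red, and handling exactly this obstruction (the closing edge may be blue, and then one must harvest blue structure from it) is a large part of the actual difficulty.

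One point of calibration you should also be aware of: the statement is labelled a \emph{conjecture} in the paper, and the paper itself does not prove it in full generality. Its contribution (Theorem~\ref{main theorem,k=5}) is the case $k=5$, $n\ge\lfloor\frac{3m}{2}\rfloor$, proved by induction on $m+n$ through three lemmas of the form ``blue $\mathcal{C}^5_{m-1}$ forces red $\mathcal{C}^5_{n-1}$'' (Lemma~\ref{pm-1 implies pn-1}) and ``red $\mathcal{C}^5_{n-1}$ with no red $\mathcal{C}^5_n$ forces blue $\mathcal{C}^5_m$'' (Lemmas~\ref{Cn-1 implies small Cm} and~\ref{Cn-1 implies cm}), all resting on the machinery of maximal paths w.r.t.\ a reservoir $W$ and good $\varpi_S$-configurations (Lemmas~\ref{spacial configuration2}--\ref{there is a Pl}). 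This is the concrete, executed version of the rigidity analysis your sketch gestures at; your review of where the general case breaks down (parity of $m$, diagonal and small cases, loss of the additive term under regularity methods) is a fair description of the state of the art, but it is commentary, not proof.
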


More recently, it is   shown that Conjecture \ref{our conjecture2} holds for $n=m$ and $k\geq 7$ (see \cite{diagonal}). For small values of $k$,
in  \cite{4-uniform}, the authors demonstrate that
Conjecture \ref{our conjecture2}
holds for $k=4$ where
$n>m$ or $n=m$ is odd. Therefore, in this regard, investigating  the small cases $k=5,6$  are  interesting.
In this paper, we focus on the case $k=5$ and
%our aim is  study of Ramsey numbers on $5$-uniform loose cycles and
shall show that
%In this paper, we show that for $k=5,$
 Conjecture \ref{our conjecture2}  holds for sufficiently
large $n$.
More precisely, we prove that Conjecture \ref{our conjecture2} holds  for $k=5$ when
$n\geq\lfloor\frac{3m}{2}\rfloor$. We remark that in the proof we extend the method that used in \cite{Ramsey numbers of loose cycles in uniform hypergraphs} and use a modified version of some lemmas in  \cite{4-uniform}.
 Throughout the paper, by Lemma 1 of
\cite{subm}, it  suffices to
 prove only the  upper bound for the claimed Ramsey numbers.
  Throughout the paper, for
a 2-edge colored hypergraph $\mathcal{H}$ we denote by
$\mathcal{H}_{\rm red}$ and $\mathcal{H}_{\rm blue}$ the induced
hypergraphs on red edges and blue edges, respectively.
%Also we
%denote by $|\mathcal{H}|$ and $\|\mathcal{H}\|$ the number of
%vertices and edges of $\mathcal{H}$, respectively.
%%%%%%%%%%%%%%%%%%%%%%%%%%%%

%%%%%%%%%%%%%%%%%%%%%%%%%%%%

%%%%%%%%%%%%%%%%%%%%%%%%%%%%%%%%%%%%%%%%%%%%%%%%%%%%%%%%%%%%%%%%%%%%%%%%%%%%%%%%%%%%%%%%%%%%%%%%%%%%%%%%%%%
\section{\normalsize Preliminaries}

\medskip
In this section, we prove  some  lemmas that will be used in the follow up section.
%we prove  some  lemmas that will be needed in our main results.
% whose proofs are
%based on the property of monochromatic maximal paths in 3-uniform hypergraphs.
 Also, we recall the following result  from  \cite{Ramsey numbers of loose cycles in uniform hypergraphs}.
%
% \begin{theorem}\label{R(Pk3,Pk3)} {\rm \cite{subm}}
%For every $k\geq 3$,
%\begin{itemize}
%\item [\rm{(a)}]~
%$R(\mathcal{P}^k_3,\mathcal{P}^k_3)=R(\mathcal{C}^k_3,\mathcal{P}^k_3)=R(\mathcal{C}^k_3,\mathcal{C}^k_3)+1=3k-1$,
%\item [\rm{(b)}]~
%$R(\mathcal{P}^k_4,\mathcal{P}^k_4)=R(\mathcal{C}^k_4,\mathcal{P}^k_4)=R(\mathcal{C}^k_4,\mathcal{C}^k_4)+1=4k-2$.
%\end{itemize}
%\end{theorem}

 \begin{theorem}\label{R(C3,C4)}{\rm \cite{Ramsey numbers of loose cycles in uniform hypergraphs}}
Let $n,k\geq 3$ be  integer numbers. Then
\begin{eqnarray*}R(\mathcal{C}^k_3,\mathcal{C}^k_n)= (k-1)n+1.\end{eqnarray*}
\end{theorem}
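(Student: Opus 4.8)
The plan is to prove only the upper bound $R(\mathcal{C}^k_3,\mathcal{C}^k_n)\le (k-1)n+1$, since by Lemma~1 of \cite{subm} the matching lower bound reduces to exhibiting a good coloring of $\mathcal{K}^k_{(k-1)n}$: color every edge through a fixed vertex $x$ red and every other edge blue. The red hypergraph is then a \emph{sunflower} with core $\{x\}$, hence has no loose triangle, because any two of its edges already meet in $x$ and so cannot have pairwise one-vertex intersections; meanwhile the blue edges live on only $(k-1)n-1$ vertices and cannot host the spanning cycle $\mathcal{C}^k_n$. This construction previews the difficulty of the upper bound: a red-triangle-free coloring can be star-like, and the content of the theorem is precisely that \emph{one} extra vertex destroys this obstruction.

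For the upper bound I would fix a $2$-coloring of $\mathcal{K}^k_N$, $N=(k-1)n+1$, with no red $\mathcal{C}^k_3$, and let $P=e_1e_2\cdots e_t$ be a longest blue loose path, with first vertex $f$, last vertex $l$, and off-path set $U=V\setminus V(P)$ of size $(k-1)(n-t)$. The first step is to force a blue path of length $n-1$. By maximality, every edge $\{l\}\cup S$ with $S\in\binom{U}{k-1}$ is red, and likewise every $\{f\}\cup S$; when $t\le n-2$ we have $|U|\ge 2(k-1)$, so these forced-red edges form large sunflowers at $l$ and at $f$, leaving ample room inside $U$. Picking disjoint $(k-1)$-sets $S_1,S_2\subseteq U$ gives red edges $\{l\}\cup S_1,\{l\}\cup S_2$ meeting in the single vertex $l$; I would then examine a connecting edge $h$ meeting these two in single, distinct vertices $a\in S_1$, $b\in S_2$ (and avoiding $l$). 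If $h$ is red we obtain a red $\mathcal{C}^k_3$, a contradiction; if $h$ is blue, a rerouting of $P$ — the role played by the modified versions of the lemmas of \cite{4-uniform} — yields a strictly longer blue loose path, again contradicting maximality. Iterating this dichotomy drives $t$ up to $n-1$.

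The final and hardest step is the \emph{closing}: from a blue loose path of length $n-1$ with endpoints $f,l$ I must produce a blue closing edge $\{f,l\}\cup S$ with $S\in\binom{U}{k-2}$ to complete a blue $\mathcal{C}^k_n$. Here the vertex budget is tight, $|U|=k-1$, exactly one more than the $k-2$ vertices needed, and the forced-red structure works against us: the $k-1$ candidate closing edges all contain \emph{both} $f$ and $l$, so they pairwise overlap in at least two vertices and cannot by themselves form a loose triangle, while the two extension edges $\{f\}\cup U$ and $\{l\}\cup U$ overlap in all of $U$. Thus triangle-avoidance does not immediately force any closing edge to be blue, and this is exactly where the ``$+1$'' fights the star-like extremal configuration. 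I expect to resolve it by using the single spare vertex together with the freedom to re-open the (near-)spanning blue path at different edges and re-choose its endpoints, so as to locate a red edge transversal to the two endpoint-sunflowers — meeting each in a single, distinct vertex — which either closes the cycle in blue or completes a red $\mathcal{C}^k_3$; equivalently, one shows that the presence of the extra vertex rules out the star configuration entirely. This closing under the one-spare-vertex constraint is the main obstacle, and everything else amounts to bookkeeping about loose-path and loose-cycle incidences.
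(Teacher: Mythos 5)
First, note that this paper does not prove Theorem~\ref{R(C3,C4)} at all: it is quoted verbatim from \cite{Ramsey numbers of loose cycles in uniform hypergraphs}, so there is no in-paper proof to compare against and your attempt has to stand on its own. Your lower-bound construction is correct: if every edge of $\mathcal{K}^k_{(k-1)n}$ through a fixed vertex $x$ is red and the rest are blue, then a red $\mathcal{C}^k_3$ would force all three pairwise intersections to equal $\{x\}$, contradicting that they must be three distinct vertices, and the blue hypergraph has only $(k-1)n-1$ vertices, too few for $\mathcal{C}^k_n$. Your identification of a red triangle in the growing step is also sound: $\{l\}\cup S_1$, $h$, $\{l\}\cup S_2$ with $h\cap S_1=\{a\}$, $h\cap S_2=\{b\}$, $l\notin h$ covers exactly $3k-3$ vertices and is a genuine loose triangle.

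The upper bound, however, has two real gaps. (a) In the growing step you assert that if the connecting edge $h$ is blue, then ``a rerouting of $P$ yields a strictly longer blue loose path,'' but nothing forces this. The edges forced blue by triangle-freeness are those meeting $S_1$ and $S_2$ in one vertex each and avoiding $l$; such an edge may lie entirely inside $U$ (touching $P$ nowhere), or meet an interior edge of $P$ in $k-2\geq 2$ vertices, and in neither case does it extend $P$ as a \emph{loose} path. Turning ``many blue edges near $U$'' into ``a longer blue path'' is exactly the content of Lemmas~\ref{spacial configuration2} and \ref{there is a Pl} of this paper for $k=5$, and it is delicate; it cannot be waved through. (b) The closing step is not an obstacle you have flagged for later --- it is the theorem. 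As you yourself compute, when $t=n-1$ the only forced-red edges $\{f\}\cup U$ and $\{l\}\cup U$ share all $k-1\geq 2$ vertices of $U$, the $k-1$ candidate closing edges pairwise share at least two vertices, and so triangle-avoidance yields nothing directly; your proposed remedy (re-opening the path to manufacture two red edges meeting in a single vertex plus a red transversal) is stated as an expectation, not carried out, and it is precisely where the spare vertex must be exploited. As written, the proposal is a correct construction plus an accurate map of the difficulties, but not a proof of the upper bound.
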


%%%%%%%%%%%%%%%%%%%%%%%%%%%%

%%%%%%%%%%%%%%%%%%%%%%%%%%%%
\vspace{0.5 cm}
Before we state our main results we need some definitions. Let
$\mathcal{H}$ be a 2-edge colored complete $5$-uniform hypergraph,
$\mathcal{P}$ be a loose path in $\mathcal{H}$ and $W$ be a set of
vertices with $W\cap V(\mathcal{P})=\emptyset$. By a {\it
$\varpi_S$-configuration}, we mean a copy of $\mathcal{P}^5_2$
with edges $$\{x,a_1,a_2,a_3,a_4\},
\{a_4,a_5,a_6,a_7,y\},$$ so that $\{x,y\}\subseteq W$ and  $S=\{a_j : 1\leq j
\leq 7\}\subseteq (e_{i-1}\setminus \{f_{\mathcal{P},e_{i-1}}\})\cup e_{i}
\cup e_{i+1}\cup e_{i+2}$ is a set of unordered vertices of
 $4$
 %(resp. of two)
 consecutive edges of
$\mathcal{P}$ with  $|S\cap (e_{i-1}\setminus \{f_{\mathcal{P},e_{i-1}}\})|\leq 1.$ Note that it is possible to have $S\subseteq e_i\cup e_{i+1}\cup e_{i+2},$ this case happens only when $S\cap (e_{i-1}\setminus \{f_{\mathcal{P},e_{i-1}}\})\subseteq \{f_{\mathcal{P},e_{i}}\}$.
%(resp. for $k=3$)
 The vertices $x$ and $y$ are called {\it
the end vertices} of this configuration. A
$\varpi_{S}$-configuration with $S\subseteq (e_{i-1}\setminus \{f_{\mathcal{P},e_{i-1}}\})\cup e_{i}
\cup e_{i+1}\cup e_{i+2}$,
%(resp. $S\subseteq e_i\cup e_{i+1}$)
 is {\it good} if at least one of the vertices of
$e_{i+2}\setminus e_{i+1}$
%(resp. $e_{i+1}\setminus e_{i}$)
is not in $S$. We say that a monochromatic path
$\mathcal{P}=e_1e_2\ldots e_n$ is {\it maximal with respect to}
(w.r.t. for short) $W\subseteq V(\mathcal{H})\setminus
V(\mathcal{P})$ if there is no $W'\subseteq W$ so that for some
$1\leq r\leq n$ and some $1\leq i \leq n-r+1,$ the path
\begin{eqnarray*}
\mathcal{P}'= \left\lbrace
\begin{array}{lr}
 e'_1e'_{2}\ldots e'_{n+1}, &i=1 \ \ {\rm and}\ \  r=n,\vspace{.2 cm}\\
  e'_1e'_{2}\ldots e'_{r+1}e_{r+1}\ldots e_n, &i=1 \ \ {\rm and}\ \  1\leq r<n,\vspace{.2 cm}\\
e_1\ldots e_{i-1}e'_ie'_{i+1}\ldots e'_{i+r}e_{i+r}\ldots e_n,\ \  & 2\leq i \leq n-r,\vspace{.2 cm}\\
e_1e_2\ldots e_{i-1}e'_ie'_{i+1}\ldots e'_{n+1}
&2 \leq i=n-r+1.
\end{array}
\right.\vspace{.2 cm}
\end{eqnarray*}
%
%either $\mathcal{P}'=e_1e_2\ldots e_{i-1}e'_ie'_{i+1}\ldots e'_{n+1}$ (for $i=n-r+1$) or $\mathcal{P}'=e_1e_2\ldots e_{i-1}e'_ie'_{i+1}\ldots e'_{i+r}e_{i+r}\ldots e_n$ (for some $1\leq i\leq n-r+1$)
 %\begin{eqnarray*}\mathcal{P}'=e_1e_2\ldots e_{i-1}e'_ie'_{i+1}\ldots e'_{i+r}e_{i+r}\ldots e_n,\end{eqnarray*}
 is a monochromatic path with $n+1$ edges and
 the following properties:

 \begin{itemize}
\item[(i)] $V(\mathcal{P}')=V(\mathcal{P})\cup W'$,
 \item[(ii)] if $i=1$, then
$f_{\mathcal{P}',e'_1}=f_{\mathcal{P},e_1}$,
\item[(iii)] if
 $i=n-r+1$, then
$l_{\mathcal{P}',e'_{n+1}}=l_{\mathcal{P},e_n}$.
\end{itemize}
In the other words, we say a monochromatic path $\mathcal{P}$ is maximal w.r.t. $W\subseteq V(\mathcal{H})\setminus
V(\mathcal{P})$, if one   can not extend  $\mathcal{P}$ any longer with the same color  using the vertices of $W.$
 Clearly, if $\mathcal{P}$ is maximal w.r.t. $W$, then it is maximal w.r.t. every
$W'\subseteq W$ and also every loose path $\mathcal{P}'$ which is a sub-hypergraph of $\mathcal{P}$ is again maximal w.r.t. $W$.\\

%  same color to $\mathcal{P}$ in $\mathcal{H}$ where $V(\mathcal{P}')=V(\mathcal{P})\cup W'$,
%   $v_{\mathcal{P}',e'_i}=v_{\mathcal{P},e_i}$ for $i=1$ and $\hat{v}_{\mathcal{P}',e'_{i+r}}=\hat{v}_{\mathcal{P},e_n}$
   % for $i+r-1=n$.

   The following lemma is indeed the modified version of  \cite{4-uniform, {Lemma 2.3}} for $5$-uniform hypergraphs. But, for the sake of completeness, we give a proof here.
   %\noindent   We use these definitions to deduce the following  essential lemma.

\bigskip
\begin{lemma}\label{spacial configuration2}
Assume that   $\mathcal{H}=\mathcal{K}^5_{n},$  is
$2$-edge colored red and blue. Let $\mathcal{P}\subseteq \mathcal{H}_{\rm red}$ be a maximal path w.r.t. $W,$
where $W\subseteq V(\mathcal{H})\setminus V(\mathcal{P})$ and  $|W|\geq 5$.
Let $A_1=\{f_{\mathcal{P},e_{1}}\}=\{v_1\}$ and
$A_i=e_{i-1}\setminus\{f_{\mathcal{P},e_{i-1}}\}$ for $i>1$.
% $W\subseteq V(\mathcal{H})\setminusV(\mathcal{P})$ with $|W|\geq k$.
 Then for every three  consecutive edges $e_i,e_{i+1}$ and $e_{i+2}$
of $\mathcal{P}$ and for each $u\in A_i$ there  is a
 good $\varpi_S$-configuration, say
$C=fg$, in $\mathcal{H}_{\rm blue}$ with  end vertices  $x\in f$ and $y\in g$  in $W$ and
 \begin{eqnarray*}S\subseteq \Big((e_i\setminus
\{f_{\mathcal{P},e_{i}}\})\cup \{u\}\Big)\cup e_{i+1} \cup \Big(e_{i+2}\setminus \{v\}\Big),\end{eqnarray*} for
some $v\in A_{i+3}$. Moreover, there are two subsets $W_1\subseteq W$ and $W_2\subseteq W$ with $|W_1|\geq |W|-3$ and $|W_2|\geq |W|-4$ so that for every distinct vertices $x'\in W_1$ and $y'\in W_2$, the path $C'=\Big((f\setminus\{x\})\cup\{x'\}\Big)\Big((g\setminus\{y\})\cup\{y'\}\Big)$ is also a good $\varpi_S$-configuration in $\mathcal{H}_{\rm blue}$ with  end vertices $x'$ and $y'$ in $W.$
\end{lemma}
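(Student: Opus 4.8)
The plan is to convert the maximality of $\mathcal{P}$ into the existence of the two required blue edges, and to read off goodness and the robustness sets $W_1,W_2$ from the very same construction. Write $e_i=\{p_1,p_2,p_3,p_4,p_5\}$, $e_{i+1}=\{p_5,r_1,r_2,r_3,r_4\}$ and $e_{i+2}=\{r_4,s_1,s_2,s_3,s_4\}$, where $p_1=f_{\mathcal{P},e_i}$, $p_5=f_{\mathcal{P},e_{i+1}}$, $r_4=f_{\mathcal{P},e_{i+2}}$ and $s_4=l_{\mathcal{P},e_{i+2}}$, and recall that $u\in A_i\subseteq e_{i-1}$ while $v\in A_{i+3}=\{s_1,s_2,s_3,s_4\}$ is still to be chosen. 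The pool $P=\big((e_i\setminus\{p_1\})\cup\{u\}\big)\cup e_{i+1}\cup(e_{i+2}\setminus\{v\})$ then has exactly $12$ vertices. At the outset I record two structural observations. First, both edges of the desired $\varpi_S$-configuration are $5$-sets of the shape $\{w\}\cup D$ with $w\in W$ and $D$ a $4$-subset of $P$; since the two edges are to meet only in the path vertex $a_4$ and their end vertices $x,y\in W$ are distinct, any two such blue $5$-sets that intersect in exactly one vertex of $P$ orient automatically into a blue loose $2$-path with its end vertices in $W$. Second, because $v\notin P\supseteq S$ and $v\in e_{i+2}\setminus e_{i+1}$, every configuration built inside $P$ is automatically good, so goodness needs no separate argument.

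The engine is a reformulation of maximality coming from the segment-replacement cases of the definition of ``maximal w.r.t.\ $W$'': one cannot replace a consecutive block of edges of $\mathcal{P}$ by a block with one more edge, built from the same path vertices together with four vertices of $W$, in such a way that the new block is red and keeps its two boundary vertices pinned to the neighbouring (unchanged) edges. In particular, there is no red loose $4$-path running from $p_1$ to $s_4$ through the eleven interior vertices of $e_ie_{i+1}e_{i+2}$ together with four vertices $W'\subseteq W$; and, to make room for $u$, the same holds for the window $e_{i-1}e_ie_{i+1}e_{i+2}$ with its left boundary pinned at $f_{\mathcal{P},e_{i-1}}$. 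The point of using a block with one more edge is that such a red $4$-path which spends exactly one vertex of $W$ on each edge has every edge of the form $\{w\}\cup D$, i.e.\ of exactly the shape of a configuration edge. This is the matching between the forbidden red structure and the blue structure I want to produce.

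I will force the two blue edges one at a time. To obtain a blue edge $\{a_1,a_2,a_3,a_4,w\}$ for most $w$, I first assemble a short \emph{red} partial detour: a red loose path of configuration-type edges running from the pinned endpoint ($p_1$, or $f_{\mathcal{P},e_{i-1}}$ when $u\neq p_1$) up to the intended shared vertex $a_4$, consuming a fixed handful of vertices of $W$; here the hypothesis $|W|\ge 5$ supplies the room for these auxiliary vertices. If such a partial red detour is present, then for every $w\in W$ outside those auxiliary vertices the completing edge $\{a_1,a_2,a_3,a_4,w\}$ is forced to be blue, since otherwise the whole block would be red and would contradict maximality. This is exactly what produces a set $W_1$ of admissible end vertices with $|W_1|\ge|W|-3$, the three exceptions being the auxiliary $W$-vertices. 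Running the symmetric construction from the other pinned end $s_4$ (after fixing $v$) through the second edge yields $W_2$ with $|W_2|\ge|W|-4$, the extra exception reflecting that on this side the reduction must also avoid $v$. Choosing $a_4$ inside $e_{i+1}$ so that the two partial detours meet only there, and then selecting distinct $x\in W_1$ and $y\in W_2$, assembles the good $\varpi_S$-configuration $C=fg$ with $S\subseteq P$; and since the edges $\{a_1,\dots,a_4\}$ and $\{a_4,\dots,a_7\}$ were held fixed throughout, the ``moreover'' clause follows verbatim.

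The main obstacle is the step asserting the auxiliary red partial detours, because maximality forbids only a \emph{completely} red block and does not hand over red partial pieces for free. I expect to clear this by a dichotomy: either enough of the relevant configuration-type $5$-sets in the window are red, in which case a red partial detour of the needed length can be built directly, or they are not, in which case blue configuration edges are already so plentiful that two of them meeting in a single vertex of $P$ with distinct $W$-ends can be chosen outright. Making this dichotomy quantitative, so that the exceptional sets have sizes at most $3$ and at most $4$ and so that $S$ never escapes the pool $P$, is where the careful bookkeeping with $u$ and $v$ enters; this is precisely the point at which the argument modifies Lemma~2.3 of \cite{4-uniform} to the $5$-uniform setting.
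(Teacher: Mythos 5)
There is a genuine gap, and you have located it yourself: the existence of the ``auxiliary red partial detours'' is never established, and the dichotomy you propose to establish it is not carried out. Maximality of $\mathcal{P}$ only tells you that among the four candidate edges of a replacement $4$-path at least one is blue; it does not let you choose \emph{which} one is blue, so you cannot arrange in advance that the first three edges of the detour are red and the last one is the forced-blue configuration edge. The second horn of your dichotomy (``blue configuration edges are plentiful, so pick two outright'') is also insufficient for the \emph{moreover} clause: that clause requires a single fixed pair of $4$-sets $f\setminus\{x\}$ and $g\setminus\{y\}$ such that \emph{almost every} $W$-vertex completes each of them to a blue edge, and an abundance of blue edges scattered over many different $4$-sets does not give this.

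The paper closes exactly this gap with an extremal choice that your sketch is missing: among all $4$-tuples $X=(x_1,x_2,x_3,x_4)$ of distinct vertices of $W$, pick one minimizing the number of blue edges in the associated candidate replacement path $E_X=\{f_1,f_2,f_3,f_4\}$ (each $f_k$ being four path vertices plus $x_k$). Maximality of $\mathcal{P}$ forces some $f_j$ to be blue; minimality of $X$ then forces $(f_j\setminus\{x_j\})\cup\{x\}$ to be blue for \emph{every} $x\in W$ outside the tuple, since a red such edge would yield a tuple with fewer blue edges. This single device produces the blue edge, the fixed $4$-set, and the set $W_1$ with $|W_1|\ge|W|-3$ all at once; repeating it with a second family $g_k=h_k\cup\{y_k\}$ (the $h_k$ chosen according to which $j$ occurred, so that the two blue edges share exactly one pool vertex) gives $g$ and $W_2$ with $|W_2|\ge|W|-4$. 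Your structural observations (that any blue edge of the configuration shape can serve, and that goodness is automatic once $S$ avoids some $v\in e_{i+2}\setminus e_{i+1}$) are correct, but without the minimization step the proof does not go through.
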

\begin{proof}{ Let $\mathcal{P}=e_1e_2\ldots e_m\subseteq \mathcal{H}_{\rm red}$ be a maximal path w.r.t. $W\subseteq V(\mathcal{H})\setminus V(\mathcal{P})$, where
\begin{eqnarray*}
e_i=\{v_{4i-3},v_{4i-2},v_{4i-1},v_{4i},v_{4i+1}\}, \hspace{1 cm} i=1,2,\ldots, m.
\end{eqnarray*}
%\begin{eqnarray*}
%$$e_1=\{v_1,v_2,v_3,v_4\},  e_2=\{v_4,v_5,v_6,v_7\}.$$
 % \end{eqnarray*}
Suppose that $e_i,e_{i+1}$ and $e_{i+2}$ are three consecutive edges of $\mathcal{P}$ and $u\in A_i$. (Note that for $i=1,$ we have $u=v_1$)
 Among  different choices of $4$ distinct vertices of $W,$ choose a $4$-tuple
$X=(x_1,x_2,x_3,x_4)$ so that $E_X$ has the  minimum
number of  blue edges, where $E_X=\{f_1,f_2,f_3,f_4\}$ and
\begin{eqnarray*}
&&f_{1}=\{u,x_1,v_{4i-2},v_{4i+2},v_{4i+6}\},\\
&&f_{2}=\{v_{4i-2},x_2,v_{4i-1},v_{4i+3},v_{4i+7}\},\\
&&f_{{3}}=\{v_{4i-1},x_{3},v_{4i},v_{4i+4},v_{4i+8}\},\\
&&f_{{4}}=\{v_{4i},x_{4},v_{4i+1},v_{4i+5},v_{4i+9}\}.
\end{eqnarray*}
Note that for $1\leq k \leq 4,$ we have $|f_k\cap(e_{i+2}\setminus\{f_{\mathcal{P},e_{i+2}}\})|=1.$
 Since $\mathcal{P}$ is a maximal path w.r.t.   $W,$ there is $1\leq j\leq
4$ so that  the edge $f_{j}$ is blue. Otherwise, replacing $e_ie_{i+1}e_{i+2}$ by $f_1f_2f_3f_4$ in $\mathcal{P}$ yields a red path $\mathcal{P}'$ with $n+1$ edges; this is a contradiction.
 Let $W_1=(W\setminus\{x_1,x_2,x_{3},x_4\})\cup\{x_j\}$.
For each vertex
$x\in W_1$  the edge
$f_x=(f_j\setminus\{x_j\})\cup\{x\}$ is blue. Otherwise, the number of blue edges in $E_Y$
% $Y=(x_1,x_2,\ldots,x_{j-1},x,x_{j+1},x_{k-1}),$
 is less than this number for $E_X$, where $Y$ is obtained from $X$  by replacing  $x_j$ to $x$. This is a contradiction.\\

Now we choose $h_1,h_2,h_3,h_4$ as follows. If $j=1,$ then set
\begin{eqnarray*}
&&h_1=\{u,v_{4i+3},v_{4i+7},v_{4i-1}\},h_2=\{v_{4i-1},v_{4i-2},v_{4i+4},v_{4i}\},\\
&&h_3=\{v_{4i},v_{4i+2},v_{4i+8},v_{4i+1}\},h_4=\{v_{4i+1},v_{4i+5},v_{4i+6},v_{4i+9}\}.
\end{eqnarray*}
If $j=2,$ then set
\begin{eqnarray*}
&&h_1=\{u,v_{4i-2},v_{4i+1},v_{4i}\},h_2=\{v_{4i},v_{4i-1},v_{4i+6},v_{4i+2}\},\\
&&h_3=\{v_{4i+2},v_{4i+3},v_{4i+8},v_{4i+4}\},h_4=\{v_{4i+4},v_{4i+5},v_{4i+7},v_{4i+9}\}.
\end{eqnarray*}
If $j=3,$ then set
\begin{eqnarray*}
&&h_1=\{u,v_{4i-2},v_{4i-1},v_{4i+1}\},h_2=\{v_{4i+1},v_{4i},v_{4i+6},v_{4i+2}\},\\
&&h_3=\{v_{4i+2},v_{4i+4},v_{4i+7},v_{4i+3}\},h_4=\{v_{4i+3},v_{4i+5},v_{4i+8},v_{4i+9}\}.
\end{eqnarray*}
If $j=4,$ then set
\begin{eqnarray*}
&&h_1=\{u,v_{4i-2},v_{4i},v_{4i-1}\},h_2=\{v_{4i-1},v_{4i+1},v_{4i+6},v_{4i+2}\},\\
&&h_3=\{v_{4i+2},v_{4i+5},v_{4i+7},v_{4i+3}\},h_4=\{v_{4i+3},v_{4i+4},v_{4i+8},v_{4i+9}\}.
\end{eqnarray*}
Note that in each the above cases, for $1\leq k \leq 4,$ we have  $|h_k\cap (f_j\setminus\{x_j\})|=1$ and $|h_k\cap(e_{i+2}\setminus(f_j\cup\{f_{\mathcal{P},e_{i+2}}\}))|\leq 1$.
Let $Y=(y_1,y_2,y_{3},y_4)$ be a $4$-tuple of distinct vertices of $W\setminus\{x_j\}$ with minimum number of blue edges in $F_{Y}$,  where $F_Y=\{g_1,g_2,g_3,g_4\}$ and $g_k=h_k\cup\{y_k\}$ for $1\leq k \leq 4$.
 Again since $\mathcal{P}$ is maximal w.r.t. $W$, for some $1\leq \ell\leq 4$ the edge
$g_{\ell}$ is blue and also, for each vertex $y_a\in W_2=
(W\setminus\{x_j,y_{1},y_{2},y_3,y_4\})\cup \{y_\ell\}$ the edge $g_a=(g_{\ell}\setminus\{y_{\ell}\})\cup\{y_a\}$ is blue. Set $f=f_j$ and $g=g_{\ell}$. Clearly $C=fg$ is our desired configuration with end vertices $x_j\in f$ and $y_{\ell}\in g$ in $W.$ Moreover,
 for distinct vertices $x'\in W_1$ and $y'\in W_2$, the path $C'=\Big((f_j\setminus\{x_j\})\cup\{x'\}\Big)\Big((g_{\ell}\setminus\{y_{\ell}\})\cup\{y'\}\Big)$ is also a good $\varpi_S$-configuration in $\mathcal{H}_{\rm blue}$ with  end vertices $x'$ and $y'$ in $W.$
% we have $C=fg$ which is our desired configuration, where $f=(f_j\setminus\{x_j\})\cup\{x'\}$ and $g=(g_{\ell}\setminus\{y_{\ell}\})\cup\{y'\}.$
  Since $|W_1|=|W|-2$, each vertex of $W,$ with the exception of at most $2,$ can be considered as an end vertex of $C'.$
  Note that the
configuration $C$ (and also $C'$) contains at most two vertices of $e_{i+2}\setminus
e_{i+1}$.\\}\end{proof}

Also, we need the following lemma.

\begin{lemma}\label{there is P3:2}
Let $\mathcal{H}=\mathcal{K}^5_{q}$ be $2$-edge colored red and blue
and $\mathcal{P}=e_1e_2\ldots e_n \subseteq \mathcal{H}_{\rm
blue}$ be a maximal path w.r.t. $W,$ where $W\subseteq
V(\mathcal{H})\setminus V(\mathcal{P})$ and $|W|\geq 7$. Assume
that  $A_1=\{f_{\mathcal{P},e_1}\}$ and
$A_i=V(e_{i-1})\setminus\{f_{\mathcal{P},e_{i-1}}\}$ for $i>1$.
Then for every two consecutive edges $e_i$ and $e_{i+1}$ of
$\mathcal{P}$ and for each $u\in A_i$, there is a
$\mathcal{P}_3^5\subseteq \mathcal{H}_{\rm red}$, say
$\mathcal{Q}$, with end vertices in $W$ such that
$V(\mathcal{Q})\subseteq ((e_i\setminus\{f_{\mathcal{P},e_i}\})\cup\{u\})\cup e_{i+1}\cup W,$
%for some $W'\subseteq W$ with $|W'|\leq 6$ and
   at least one of the vertices
of  $A_{i+2}$  is not in $\mathcal{Q}$
 and $\vert W\cap
 V(\mathcal{Q})\vert \leq 6$.
Moreover, each vertex of $W,$ with
the exception of at most one, can be considered as an end vertex
of  $\mathcal{Q}$.
\end{lemma}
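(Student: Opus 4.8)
The plan is to mirror the layered exchange argument of Lemma~\ref{spacial configuration2}, with the two colors interchanged. Here $\mathcal P$ is a maximal \emph{blue} path, so the operative principle becomes: whenever we produce edges that chain into a loose path and that, were they \emph{all blue}, would let us replace a block of consecutive edges of $\mathcal P$ and thereby obtain a blue path with $n+1$ edges on the vertex set $V(\mathcal P)\cup W'$, the maximality of $\mathcal P$ forces at least one of them to be \emph{red}. Dually to the minimum-blue choice in Lemma~\ref{spacial configuration2}, I would pick the vertices of $W$ used in each such family so as to \emph{minimize the number of red edges}; the same exchange computation then shows that the forced red edge remains red when its free vertex of $W$ is replaced by all but a bounded number of the other vertices of $W$. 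This is the only place where the hypotheses interact with the coloring, and it is directly inherited from Lemma~\ref{spacial configuration2}.

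I would first fix coordinates $e_i=\{v_{4i-3},\dots,v_{4i+1}\}$ and $e_{i+1}=\{v_{4i+1},\dots,v_{4i+5}\}$, so that $A_{i+2}=\{v_{4i+2},v_{4i+3},v_{4i+4},v_{4i+5}\}$ and the only path vertices at our disposal are the nine vertices $u,v_{4i-2},\dots,v_{4i+5}$. Since a copy of $\mathcal P_3^5$ has $13$ vertices and the conclusion requires us to omit at least one vertex of $A_{i+2}$ and to use at most six vertices of $W$, the count forces $\mathcal Q$ to take eight of these path vertices and five of $W$ (or seven and six). In particular $\mathcal Q$ cannot keep both of its interior connector vertices on the path; at least some connectors must lie in $W$. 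This is the essential structural difference from Lemma~\ref{spacial configuration2}, where three edges of $\mathcal P$ supplied enough path vertices to keep all connectors off $W$, and it is what raises the requirement to $|W|\ge 7$.

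With this in hand I would build the red path $\mathcal Q=g_1g_2g_3$ in three successive applications of the forcing principle. The first application, using $u$ together with the tail of $e_i$ and a few fresh vertices of $W$, yields a red edge $g_1$ whose endpoint in $W$ is free. Knowing which candidate of the first family turned red, the second family is designed to share exactly one vertex with $g_1$ and to force a red edge $g_2$ adjacent to $g_1$; the third family is built on $g_2$ to force a red edge $g_3$ meeting $g_2$ in one vertex and otherwise disjoint from $g_1$. Choosing the interior connectors to avoid one fixed vertex of $A_{i+2}$ secures the missing-vertex-of-$A_{i+2}$ requirement, and applying the minimum-red selection to the first and third families lets the two end vertices of $g_1g_2g_3$ range over $W$ with at most one exception, which is exactly the last assertion.

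The hard part is the bookkeeping in these three stages, which I expect to require an explicit case list analogous to the four $j$-cases in Lemma~\ref{spacial configuration2}: for \emph{every} possibility of which candidate is forced red in each stage, one must check that consecutive forced edges meet in exactly one vertex, that $g_1,g_2,g_3$ have no further incidences (so that $g_1g_2g_3$ is a genuine, non-degenerate loose path), and that the running totals never exceed six vertices of $W$ nor consume a second vertex of $A_{i+2}$ beyond the one deliberately omitted. Once the families are written down so that these incidence and budget constraints hold in all cases, the color-forcing and the end-vertex flexibility follow verbatim from the exchange argument already carried out for Lemma~\ref{spacial configuration2}.
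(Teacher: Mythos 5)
You have correctly isolated the only mechanism available here---maximality of the blue path forces an edge to be red whenever an all-blue family of candidates would splice into $\mathcal{P}$ to produce a blue path with $n+1$ edges---and your vertex count (at least $5$ and at most $6$ vertices of $W$, at most $8$ path vertices once a vertex of $A_{i+2}$ is discarded) is accurate. But the proposal stops exactly where the lemma begins: the candidate families are never written down, and for a statement of this kind the families \emph{are} the proof. Your architecture of three successive forcings, each contributing one edge of $\mathcal{Q}$, is moreover not obviously executable: each of the second and third families must simultaneously (a) have every member meet the previously forced red edge in exactly one prescribed vertex while avoiding its other vertices, and (b) have its all-blue configuration form a legitimate loose-path replacement of $e_ie_{i+1}$ inside $\mathcal{P}$ (attaching to $e_{i-1}$ at $u$ and to $e_{i+2}$ at $v_{4i+5}$), and this must work for every possible identity of the previously forced edge, all within the nine path vertices that the three stages must share. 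Nothing in Lemma~\ref{spacial configuration2} guarantees this, so ``follows verbatim'' is not justified. A further slip: your claim that the count forces an interior connector of $\mathcal{Q}$ into $W$ is false---$7$ path vertices plus $6$ from $W$ is compatible with both connectors lying on the path, and the paper's own construction in its first case produces exactly such a $\mathcal{Q}$.

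The paper's actual proof avoids iterated forcing altogether. It splits on the colour of two pivot edges, $\{u,v_{4i-2},v_{4i-1},x,x'\}$ and $\{v_{4i-1},v_{4i},v_{4i+1},x,x'\}$ with $x,x'\in W$. If both are red for \emph{all} choices of $x,x'$, these two red edges (meeting in $v_{4i-1}$) already form two thirds of $\mathcal{Q}$, and a single dichotomy (a candidate third edge is either red, and we are done, or blue, in which case replacing $e_{i+1}$ by two blue edges would lengthen $\mathcal{P}$, forcing an alternative third edge to be red) finishes the job. If instead one pivot is blue for some $x,x'$, that blue edge serves as a splice point: extensions of $\mathcal{P}$ built through it force whole batches of edges to be red at once, which are then assembled into $\mathcal{Q}$. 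The end-vertex flexibility comes not from a minimum-red selection but from the arbitrariness of the auxiliary vertices $x_4,\dots,x_7$ in each branch. To complete your argument you would need either to adopt such a structure or to exhibit your three families explicitly and verify the incidence and budget constraints in all resulting subcases; as written, the proof is missing.
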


%%%%%%%%%%%%%%%%%%%%%%%%%%%%
\begin{proof}{ Let $\mathcal{P}=e_1e_2\ldots e_n \subseteq \mathcal{H}_{\rm
blue}$ be a maximal path w.r.t. $W\subseteq V(\mathcal{H})\setminus V(\mathcal{P}),$ where
\begin{eqnarray*}
e_i=\{v_{4i-3},v_{4i-2},v_{4i-1},v_{4i},v_{4i+1}\},\hspace{1 cm} 1\leq i\leq n.
\end{eqnarray*}
Also, let  $e_i=\{v_{4i-3},v_{4i-2},v_{4i-1},v_{4i},v_{4i+1}\}$ and
$e_{i+1}=\{v_{4i+1},v_{4i+2},v_{4i+3},v_{4i+4},v_{4i+5}\}$ be two
consecutive edges of $\mathcal{P},$ $u\in A_i$ and  $W=\{x_1,...,x_t\}$ (note that for $i=1$, we have $u=v_1$).
%and $T=\{ 1,2,\cdots,t\}$.
\bigskip

\noindent \textbf{Case 1. }There exist   $x_{j},x_{j'}\in W$
 such that the edge
$e=\{u,v_{4i-2},v_{4i-1},x_j,x_{j'}\}$ is blue.

\medskip
\noindent Suppose without loss of generality that  $x_j=x_1$ and
$x_{j'}=x_2$. If for every vertex $x_3\in W\setminus\{x_1,x_2\}$ the
edge $f_1=\{x_2,x_3,v_{4i+1},v_{4i+3},v_{4i+4}\}$ is blue, then
for arbitrary vertices $x_4,x_5,x_6,x_7\in W\setminus\{x_1,x_2,x_3\}$ the edges
\begin{eqnarray*}
&&f_2=\{x_1,x_{4},v_{4i+4},v_{4i+5},v_{4i+2}\},\\
&&f_3=\{v_{4i+2},v_{4i-2},x_{3},x_{5},v_{4i}\},\\
&&f_4=\{v_{4i},v_{4i-1},v_{4i+1},x_{6},x_{7}\},
\end{eqnarray*}
are red (since $\mathcal{P}$ is maximal w.r.t. $W$) and $\mathcal{Q}=f_2f_3f_4$ is the desired path. Note that for every
$\alpha\neq 2,$ $x_{\alpha}$ can be considered as an end vertex of
$\mathcal{Q}$. To see that, let $x_3 \notin
\{x_{\alpha},x_1,x_2\}$. Thereby, for $\alpha=1$ we have
$x_{\alpha}\in f_2$ and otherwise we may assume that
$x_{\alpha}=x_6$ ($x_{\alpha}\in f_4$).

\noindent Now, we may assume that there is a vertex $x_3\in W\setminus \{x_1,x_2\}$
so that $f_1$ is red. Again, since $\mathcal{P}$ is maximal w.r.t. $W,$ the path
$\mathcal{Q}=f_1f_4\{v_{4i-2},v_{4i},v_{4i+2},x_{4},x_5\}$ is
 the desired red $\mathcal{P}^5_3$. It is easy to check that for every
$\alpha\neq 1,$ $x_{\alpha}$ can be considered as  an end vertex of
$\mathcal{Q}.$
%(for $\alpha\in \{2,3\}$, $x_{\alpha}\in f_1$ and
%otherwise w.l.g. let $x_{\alpha}=x_4$).

\bigskip
\noindent \textbf{Case 2. }There exist   $x_{j},x_{j'}\in W$
 such that the edge $e'=\{v_{4i-1}, v_{4i}, v_{4i+1}, x_j,
x_{j'}\}$ is blue.

\medskip
\noindent Similar to Case 1, assume that $x_j=x_1$ and $x_{j'}=x_2$.
Since $\mathcal{P}$ is maximal w.r.t. $W,$ for every
%$x_{3},x_{4},x_{5},x_{6},x_{7}$
$x_{\ell} \in W\setminus \{x_1,x_2\},$ $3\leq {\ell}\leq 7,$ the edges
$g_1=\{u,v_{4i-2},v_{4i-1},x_{3},x_{4}\}$ and
$g_2=\{u,v_{4i},x_{5},x_{6},x_{7}\}$ are red (if the edge $g_{\ell},$ $1\leq {\ell} \leq 2,$ is blue, replacing the edge $e_i$ by $g_{\ell}e'$ in the path $\mathcal{P},$ yields a blue path $\mathcal{P}'$ with $n+1$ edges, a contradiction). If there is
$k\in \{1,2\}$ such that the edge
$g_3=\{v_{4i+3},v_{4i+4},v_{4i+5},x_k,x_{5}\}$ is blue, then for
   $\ell\in \{1,2\}\setminus\{k\},$ the edge
$g_4=\{v_{4i+1},v_{4i+2},v_{4i+3},x_{6},x_{\ell}\}$ is red. So
$\mathcal{Q}=g_1 g_2 g_4$ makes the  desired red $\mathcal{P}^5_3$ (it
is obvious that for every $\alpha \neq k,$ $x_{\alpha}$ can be
seen as an end vertex of $\mathcal{Q}$). So we may assume that for
every $k\in \{1,2\}$, the edge $g_3$ is red and $\mathcal{Q}=g_1 g_2 g_3$
is a red $\mathcal{P}^5_3$ such that $v_{4i+2}\notin V(\mathcal{Q})$
and every $x_{\alpha}\in W$ can be considered as an end vertex of
$\mathcal{Q}$.

\bigskip \noindent \textbf{Case 3. }For every  $x_1,x_{2},x_3,x_{4}\in W,$  the edges $e=\{u, v_{4i-2}, v_{4i-1}, x_{1}, x_{2}\}$ and
$e'=\{v_{4i-1},v_{4i},v_{4i+1},x_{3},x_{4}\}$ are red.

\medskip
\noindent
We may assume that for every $k\neq j$, $1\leq j\leq 4$,
the edge $h_1=\{x_{4}, v_{4i+3}, v_{4i+4}, v_{4i+5}, x_{\ell}\}$ is red
and so $\mathcal{Q}=ee'h_1$ is the desired path (note that $v_{4i+2}\notin V(\mathcal{Q})$). If
not, since $\mathcal{P}$ is maximal w.r.t. $W,$
for every vertices $x_{\ell}, x_{\ell'} \in W\setminus\{x_1,$ $x_2,$ $x_3,$
$x_4,$ $x_k\}$ the edge $h_2=\{v_{4i+1},v_{4i+2},v_{4i+3}, x_{\ell},
x_{\ell'}\}$ is red and
 $\mathcal{Q}=ee'h_2$ is the  desired  path (note that, in this case, every vertex of
$W$ can be considered as an end vertex of $\mathcal{Q}$).

}\end{proof}

Consider a given  2-edge colored complete 5-uniform hypergraph $\mathcal{H}$.
 By Lemma \ref{spacial configuration2}, we can find many disjoint blue $\varpi_S$-configuration corresponding to a maximal
red loose path. The following lemma guarantees how we can connect these configurations
 to make at most two blue paths $\mathcal{Q}$ and $\mathcal{Q}'$ so that $\|\mathcal{Q}\cup \mathcal{Q}'\|$ is sufficiently large.\\

\begin{lemma}\label{there is a Pl}
Let   $\mathcal{H}=\mathcal{K}_l^5$  be two edge
colored red and blue. Also let $\mathcal{P}=e_1e_2\ldots e_n,$ $n\geq
3,$ be a maximal red path w.r.t. $W,$ where $W\subseteq
V(\mathcal{H})\setminus V(\mathcal{P})$ and  $|W|\geq 5$. Then for
some $r\geq 0$ and $W'\subseteq W$ there are  two disjoint blue
paths $\mathcal{Q}$ and $\mathcal{Q}',$ with $\|\mathcal{Q}\|\geq 2$ and
\begin{eqnarray*}
\|\mathcal{Q}\cup \mathcal{Q}'\|=2(n-r)/3=\left\lbrace
\begin{array}{ll}
2(|W'|-2)  &\mbox{if} \ \|\mathcal{Q}'\|\neq 0,\vspace{.5 cm}\\
2(|W'|-1) & \mbox{if}\ \|\mathcal{Q}'\|=0,
\end{array}
\right.\vspace{.2 cm}
\end{eqnarray*}
 between $W'$ and
$\overline{\mathcal{P}}=e_1e_2\ldots e_{n-r}$ so that $e\cap W'$
is actually the end vertex of $e$ for each edge $e\in
\mathcal{Q}\cup \mathcal{Q}'$ and at least one of the vertices of
$e_{n-r}\setminus e_{n-r-1}$ is not in $V(\mathcal{Q})\cup
V(\mathcal{Q}')$. Moreover, if $\|\mathcal{Q}'\|=0$ then either
$x=|W\setminus W'|\in\{2,3\}$ or $x\geq 4$ and $0\leq r \leq
2$. Otherwise, either $x=|W\setminus W'|=1$ or $x\geq 2$ and
$0\leq r \leq 2$.

%for every $W\subseteq V(\mathcal{H})\setminus V(\mathcal{P})$.
%Then there is a blue path $Q$ of length
%$t=min\{2\lfloor\frac{n}{2} \rfloor, 2|W|-??\}$ between
%$e_1e_2\ldots e_t$ and $W$ so that the end vertices of $Q$ are in
%$W$ and each vertex of $W$, except end vertices, lies on two edges
%of $Q$. Moreover the end vertices of $Q$ ???.
\end{lemma}

%%%%%%%%%%%%%%%%%%%%%%%%%%%%
\begin{proof}{
Let $\mathcal{P}=e_1e_2\ldots e_{n}$ be a maximal  red path w.r.t.
$W,$ $W\subseteq V(\mathcal{H})\setminus V(\mathcal{P})$, and
\begin{eqnarray*} e_i=\{v_{(i-1)(k-1)+1},v_{(i-1)(k-1)+2},\ldots,v_{i(k-1)+1}\}, \hspace{1 cm} i=1,2,\ldots,n,\end{eqnarray*}
are the edges of $\mathcal{P}$.\\\\
{\bf Step 1:} Set  $\mathcal{P}_1=\mathcal{P}$, $W_1=W$ and
$\overline{\mathcal{P}}_1=\mathcal{P}'_1=e_{1}e_{2}e_3$. Since $\mathcal{P}$ is
maximal w.r.t. $W_1$, using Lemma
 \ref{spacial configuration2} there is a good $\varpi_S$-configuration, say
 $\mathcal{Q}_1=f_1g_1,$ in $\mathcal{H}_{\rm blue}$  with end vertices $x\in f_1$ and $y\in g_1$ in $W_1$
 so that $S\subseteq \mathcal{P}'_1$  and
 % between $\mathcal{P}'_1$ and $W'_1\subseteq W_1$
  $\mathcal{Q}_1$ does not contain  a vertex of
 $e_3\setminus e_2,$ say $u_1$. Set  $X_1=|W\setminus V(\mathcal{Q}_1)|$,
$\mathcal{P}_2=\mathcal{P}_1\setminus \overline{\mathcal{P}}_1=e_4e_5\ldots e_n$ and
$W_2=W.$
%(W_1\setminus V(\mathcal{Q}_1))\cup\{x_1,y_1\}$.
 If  $|W_2|=5$ or $\|\mathcal{P}_2\|\leq 2$, then $\mathcal{Q}=\mathcal{Q}_1$ is a blue path between $W'=W_1\cap V(\mathcal{Q}_1)$
 and $\overline{\mathcal{P}}=\overline{\mathcal{P}}_1$ with desired properties. Otherwise, go to Step
 2.\\\\
%If
%$\mathcal{P}'_1=\mathcal{P}$,
 %then $\mathcal{Q}=\mathcal{Q}_1$ is a blue path between $W'=W'_1$ and $\overline{\mathcal{P}}=\mathcal{P}$ with desired properties.

\noindent {\bf Step 2:}
 Clearly  $|W_2|\geq 6$ and $\|\mathcal{P}_2\|\geq 3.$
 Set $\overline{\mathcal{P}}_2=e_4e_5e_6$ and
 $\mathcal{P}'_2=((e_{4}\setminus \{f_{\mathcal{P},e_{4}}\})\cup\{u_1\})e_5e_6$.
 Since $\mathcal{P}$ is maximal w.r.t. $W_2$,
 using Lemma \ref{spacial configuration2} there is  a good $\varpi_S$-configuration, say
 $\mathcal{Q}_2=f_2g_2,$ in $\mathcal{H}_{\rm blue}$ with end vertices $x\in f_2$ and $y\in g_2$ in $W_2$
% $\mathcal{P}'_2$ and $W'_2\subseteq W_2$
 such that $S\subseteq \mathcal{P}'_2$  and  $\mathcal{Q}_2$ does not contain a vertex of $e_6\setminus e_5,$ say $u_2.$ By Lemma \ref{spacial configuration2}, there are two subsets $W_{21}\subseteq W_2$ and $W_{22}\subseteq W_2$ with $|W_{21}|\geq |W_2|-3$ and $|W_{22}|\geq |W_2|-4$ so that for every distinct vertices $x'\in W_{21}$ and $y'\in W_{22}$, the path $\mathcal{Q}'_2=\Big((f_2\setminus\{x\})\cup\{x'\}\Big)\Big((g_2\setminus\{y\})\cup\{y'\}\Big)$ is also a good $\varpi_S$-configuration in $\mathcal{H}_{\rm blue}$ with  end vertices $x'$ and $y'$ in $W_2.$
 % Since each vertex of $W_2$ with the exception of at most $k-2,$ can be considered as an end vertex of $\mathcal{Q}_2,$
Therefore, we may assume that   $\bigcup_{i=1}^{2}\mathcal{Q}_i$ is either  a blue path or the union of  two disjoint blue
paths.
 Set $X_2=|W\setminus \bigcup_{i=1}^{2} V(\mathcal{Q}_i)|$ and
$\mathcal{P}_3=\mathcal{P}_{2}\setminus
\overline{\mathcal{P}}_{2}=e_7e_8\ldots e_n$. If
$\bigcup_{i=1}^{2}\mathcal{Q}_i$ is a blue path $\mathcal{Q}$  with end vertices $x_{2}$ and $y_{2}$,
then set
\begin{eqnarray*}
W_3=\Big(W_{2}\setminus
V(\mathcal{Q})\Big)\cup\{x_{2},y_{2}\}.
\end{eqnarray*}
In this case, clearly $|W_3|= |W_2|-1$.
Otherwise, $\bigcup_{i=1}^{2}\mathcal{Q}_i$ is the union of  two disjoint blue
paths $\mathcal{Q}$ and $\mathcal{Q}'$ with end vertices $x_{2},y_{2}$ and $x'_{2},y'_{2}$ in $W_2$, respectively. In this case,  set
\begin{eqnarray*}
W_3=\Big(W_{2}\setminus
V(\mathcal{Q}\cup\mathcal{Q}')\Big)\cup\{x_{2},y_{2},x'_{2},y'_{2}\}.
\end{eqnarray*}
Clearly $|W_3|=|W_2|$. If  $|W_3|\leq 5$ or $\|\mathcal{P}_3\|\leq 2$,
 then   $\bigcup_{i=1}^{2}\mathcal{Q}_i=\mathcal{Q}$ and  $\emptyset$  or $\mathcal{Q}$ and $\mathcal{Q}'$ (in the case $\bigcup_{i=1}^{2}\mathcal{Q}_i=\mathcal{Q}\cup\mathcal{Q}'$) are the paths between $W'=W\cap \bigcup_{i=1}^{2} V(\mathcal{Q}_i)$ and $\overline{\mathcal{P}}=\overline{\mathcal{P}}_1\cup \overline{\mathcal{P}}_2$
  with desired properties. Otherwise, go to Step $3$.\\\\\\

  %Either
  %$\mathcal{Q}=\mathcal{Q}_1\cup \mathcal{Q}_2$
%is  a blue path  with end vertices $x_2, y_2$
%in $W_2$ or we have  two disjoint blue paths $\mathcal{Q}=\mathcal{Q}_1$ and
%$\mathcal{Q}'=\mathcal{Q}_2$ with end vertices $x_2, y_2$ and $x'_2, y'_2$ in
%$W_2$.
%If the last edge of $\overline{\mathcal{P}}_2$ is $e_n$, then either $\mathcal{Q}_1\cup \mathcal{Q}_2$ and $\emptyset$ or $\mathcal{Q}_1$ and $\mathcal{Q}_2$ %are the paths with desired properties.\\\\
\noindent{\bf Step $\ell$ ($\ell>2$):}
Clearly  $|W_{\ell}|\geq 6$ and $\|\mathcal{P}_{\ell}\|\geq 3.$ Set
\begin{eqnarray*}
\hspace{-0.7 cm}&&\overline{\mathcal{P}}_{\ell}=e_{3{\ell}-2}e_{3{\ell}-1}e_{3{\ell}},\\
\hspace{-0.7 cm}&&\mathcal{P}'_{\ell}=\Big((e_{3{\ell}-2}\setminus\{f_{\mathcal{P},e_{3{\ell}-2}}\})\cup
\{u_{{\ell}-1}\}\Big)e_{3{\ell}-1}e_{3{\ell}}.
\end{eqnarray*}
    Since $\mathcal{P}$
   is maximal w.r.t. $W_{\ell}$,
 using Lemma \ref{spacial configuration2} there is a good $\varpi_S$-configuration, say
 $\mathcal{Q}_{\ell}=f_{\ell}g_{\ell},$ in $\mathcal{H}_{\rm blue}$ with end vertices $x\in f_{\ell}$ and $y\in g_{\ell}$ in $W_{\ell}$
% $\mathcal{P}'_l$ and $W'_l\subseteq W_l$
  such
 that   $\mathcal{Q}_{\ell}$ does not contain a vertex of  $e_{3{\ell}}\setminus e_{3{\ell}-1},$ say $u_{\ell}$.
 By Lemma \ref{spacial configuration2}, there are two subsets $W_{{\ell}1}\subseteq W_{\ell}$ and $W_{{\ell}2}\subseteq W_{\ell}$ with $|W_{{\ell}1}|\geq |W_{\ell}|-3$ and $|W_{{\ell}2}|\geq |W_{\ell}|-4$ so that for every distinct vertices $x'\in W_{{\ell}1}$ and $y'\in W_{{\ell}2}$, the path $\mathcal{Q}'_{\ell}=\Big((f_{\ell}\setminus\{x\})\cup\{x'\}\Big)\Big((g_{\ell}\setminus\{y\})\cup\{y'\}\Big)$ is also a good $\varpi_S$-configuration in $\mathcal{H}_{\rm blue}$ with  end vertices $x'$ and $y'$ in $W_{\ell}.$ Therefore, we may assume that
 either
  $\bigcup_{i=1}^{{\ell}}\mathcal{Q}_i$ is  a blue path $\mathcal{Q}$  with end
 vertices  in $W_{\ell}$  or we have  two disjoint blue paths $\mathcal{Q}$ and $\mathcal{Q}'$ with end vertices  in $W_{\ell}$   so that $\mathcal{Q}\cup \mathcal{Q}'=\bigcup_{i=1}^{{\ell}}\mathcal{Q}_i$.

\noindent Set $X_{\ell}=|W\setminus \bigcup_{i=1}^{{\ell}}V(\mathcal{Q}_i)|$ and
$\mathcal{P}_{{\ell}+1}=\mathcal{P}_{{\ell}}\setminus
\overline{\mathcal{P}}_{{\ell}}=e_{3{\ell}+1}e_{3{\ell}+2}\ldots e_n$. If
$\bigcup_{i=1}^{{\ell}}\mathcal{Q}_i$ is a blue path $\mathcal{Q}$ with end vertices $x_{{\ell}}$ and $y_{{\ell}}$,
then set
\begin{eqnarray*}
W_{{\ell}+1}=\Big(W_{{\ell}}\setminus
V(\mathcal{Q})\Big)\cup\{x_{{\ell}},y_{{\ell}}\}.
\end{eqnarray*}
Note that in this case, $|W_{{\ell}}|-2\leq |W_{{\ell}+1}|\leq |W_{{\ell}}|-1$.
Otherwise, $\bigcup_{i=1}^{{\ell}}\mathcal{Q}_i$ is the union of  two disjoint blue
paths $\mathcal{Q}$ and $\mathcal{Q}'$ with end vertices $x_{{\ell}},y_{{\ell}}$ and $x'_{{\ell}},y'_{{\ell}}$, respectively. In this case,  set
\begin{eqnarray*}
W_{{\ell}+1}=\Big(W_{{\ell}}\setminus
V(\mathcal{Q}\cup\mathcal{Q}')\Big)\cup\{x_{{\ell}},y_{{\ell}},x'_{{\ell}},y'_{{\ell}}\}.
\end{eqnarray*}
Clearly, $|W_{{\ell}}|-1\leq |W_{{\ell}+1}|\leq |W_{{\ell}}|$.\\
 If  $|W_{{\ell}+1}|\leq 5$ or $\|\mathcal{P}_{{\ell}+1}\|\leq 2$,
 then   $\bigcup_{i=1}^{{\ell}}\mathcal{Q}_i=\mathcal{Q}$ and  $\emptyset$  or $\mathcal{Q}$ and $\mathcal{Q}'$ (in the case $\bigcup_{i=1}^{{\ell}}\mathcal{Q}_i=\mathcal{Q}\cup\mathcal{Q}'$) are the paths with the desired properties. Otherwise, go to Step $\ell+1$.\\

Let $t\geq 2$ be the minimum integer for which we have either $|W_t|\leq
5$ or $\|\mathcal{P}_t\|\leq 2$.
 Set $x=X_{t-1}$ and $r=\|\mathcal{P}_{t}\|=n-3(t-1)$. So $\bigcup_{i=1}^{t-1}\mathcal{Q}_i$ is either  a blue path $\mathcal{Q}$ or the union two disjoint  blue paths $\mathcal{Q}$ and $\mathcal{Q}'$ between $\overline{\mathcal{P}}=e_1e_2\ldots
 e_{n-r}$ and $W'=W\cap (\bigcup_{i=1}^{t-1} V(\mathcal{Q}_i))$ with the desired properties.
 If $\bigcup _{i=1}^{t-1}\mathcal{Q}_i$ is  a blue path $\mathcal{Q},$ then either $x\in\{2,3\}$ or $x\geq
4$ and $0\leq r\leq 2$. Otherwise,
$\bigcup _{i=1}^{t-1}\mathcal{Q}_i$ is the union of  two disjoint blue paths
$\mathcal{Q}$ and $\mathcal{Q}'$ and we have either $x=1$ or $x\geq 2$ and $0\leq
r\leq 2$.

 }\end{proof}

\bigskip

%%%%%%%%%%%%%%%%%%%%%%%%%%%

%%%%%%%%%%%%%%%%%%%%%%%%%%%%

%%%%%%%%%%%%%%%%%%%%%%%%%%%%%%%%%%%%%%%%%%%%%%%%%%%%%%%%%%%%%%%%%%%%%%%%%%%%%%%%%%%%%%%%%%%%%%%%%%%%%%%%%%

%%%%%%%%%%%%%%%%%%
\section{Ramsey number of 5-uniform loose cycles}
In this section we determine the exact value of the
Ramsey number $R(\mathcal{C}^5_{n},\mathcal{C}^5_{m})$, where
$n\geq\lfloor\frac{3m}{2}\rfloor.$ We shall use Lemma \ref{there is P3:2} to
prove the following basic lemma.

\bigskip
\begin{lemma}\label{pm-1 implies pn-1}

Let $n= \Big\lfloor\frac{3m}{2}\Big\rfloor$, $m\geq 4$, and
$\mathcal{H}=\mathcal{K}^5_{4n+\lfloor\frac{m-1}{2}\rfloor}$ be
$2$-edge colored red and blue. If there is no copy of
$\mathcal{C}^5_{m}$ in $\mathcal{H}_{\rm blue}$ and
$\mathcal{C}=\mathcal{C}^5_{m-1}\subseteq\mathcal{H}_{\rm blue}$,
then $\mathcal{C}^5_{n-1}\subseteq\mathcal{H}_{\rm red}$.
\end{lemma}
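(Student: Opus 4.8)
\section*{Proof proposal}

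The plan is to convert the given blue $\mathcal{C}^5_{m-1}$ into a red $\mathcal{C}^5_{n-1}$ by replacing consecutive pairs of blue edges with short red paths supplied by Lemma \ref{there is P3:2}, and then splicing these red paths together through the large reservoir of unused vertices. First I would fix the blue cycle $\mathcal{C}=c_1c_2\cdots c_{m-1}$ and put $W=V(\mathcal{H})\setminus V(\mathcal{C})$, so that $|W|=4n+\lfloor\frac{m-1}{2}\rfloor-4(m-1)$. Writing $m=2s+1$ (odd) or $m=2s$ (even) and using $n=\lfloor\frac{3m}{2}\rfloor$, this yields $|W|=5s+4$ when $m$ is odd and $|W|=5s+3$ when $m$ is even; in particular $|W|\geq 7$ for every $m\geq 4$, and the target cycle has $n-1=3s$ edges in the odd case and $n-1=3s-1$ edges in the even case. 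These exact counts are what force the extraction below to use \emph{every} edge of $\mathcal{C}$, not merely a sub-path of it.

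The key observation is that the hypothesis ``no blue $\mathcal{C}^5_m$'' plays exactly the role that path–maximality plays in Lemma \ref{there is P3:2}: if two consecutive blue edges $c_i,c_{i+1}$ of $\mathcal{C}$ could be rerouted through $W$ into three blue edges, the blue cycle would grow into a blue $\mathcal{C}^5_m$, a contradiction. Thus, reading the cyclic edge sequence of $\mathcal{C}$ as a closed blue structure maximal w.r.t.\ $W$, I would invoke the cyclic analogue of Lemma \ref{there is P3:2} (whose proof is verbatim that of Lemma \ref{there is P3:2}, with each appeal to path-maximality replaced by an appeal to the absence of a blue $\mathcal{C}^5_m$) on each consecutive pair. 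Concretely, partition the edges of $\mathcal{C}$ into the pairs $(c_1,c_2),(c_3,c_4),\dots$; for each pair the lemma produces a red $\mathcal{P}^5_3$, say $\mathcal{Q}_j$, with both end vertices in $W$, with $|W\cap V(\mathcal{Q}_j)|\leq 6$, and leaving at least one vertex of the succeeding edge free (the $A_{i+2}$ clause), which is precisely what stops adjacent red pieces from colliding.

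Next I would splice the $\mathcal{Q}_j$ into one red cycle. The decisive flexibility in Lemma \ref{there is P3:2} is that every vertex of $W$ with at most one exception may be taken as an end vertex of a given $\mathcal{Q}_j$; processing the pairs in cyclic order I would force the terminal end vertex of $\mathcal{Q}_j$ to coincide with the initial end vertex of $\mathcal{Q}_{j+1}$, so that the last edge of $\mathcal{Q}_j$ and the first edge of $\mathcal{Q}_{j+1}$ meet in exactly that one vertex of $W$, and finally identify the end of the last piece with the start of the first. When $m$ is odd all $m-1=2s$ edges pair up into $s$ pieces, giving $3s=n-1$ red edges and a red $\mathcal{C}^5_{n-1}$. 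When $m$ is even one blue edge is left over, and from it together with $W$ I would extract a red $\mathcal{P}^5_2$ (two further red edges), reaching the required $3s-1=n-1$ edges. A vertex count confirms feasibility: in a cyclic chain of $p$ pieces the $p$ splice vertices are shared, so at most $6p-p=5p$ vertices of $W$ are consumed, and since $p\leq s$ one has $5s\leq|W|$ in both parities, leaving a small surplus which also absorbs the at-most-one forbidden end vertex per piece.

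The main obstacle is the bookkeeping of the splicing step, where I must simultaneously guarantee that the single forbidden end vertex of each $\mathcal{Q}_j$ never blocks the prescribed connection, that the $W$-vertices used by distinct pieces are disjoint apart from the intended shared splice vertices, and that the wrap-around identification genuinely closes a loose cycle rather than leaving a path; the free-vertex clause and the $|W|$-surplus recorded above are exactly what is spent here, and in the even case one must additionally check that the leftover-edge $\mathcal{P}^5_2$ attaches consistently. The second delicate point is making the transfer from the path-maximality of Lemma \ref{there is P3:2} to the cyclic ``no blue $\mathcal{C}^5_m$'' hypothesis fully rigorous, in particular for the pair that wraps around $\mathcal{C}$; once these two points are secured, the remainder is the routine edge- and vertex-count verification summarized above.
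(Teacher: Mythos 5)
Your high-level strategy is the same as the paper's main engine: pair up consecutive blue edges, extract a red $\mathcal{P}^5_3$ from each pair via Lemma \ref{there is P3:2}, and chain the pieces through $W$ using the ``all but one end vertex'' flexibility. The arithmetic ($|W|=5s+4$ or $5s+3$, $p\leq s$ pieces, $3s$ resp.\ $3s-1$ red edges) is also correct. But the paper does \emph{not} run this cyclically. It first splits into two cases according to whether some edge $e_i$ of $\mathcal{C}$ admits a blue edge of the form $\{v_{4i-1},v_{4i},v_{4i+1},u,v\}$ or $\{v_{4i-3},v_{4i-2},v_{4i-1},u,v\}$ with $u,v\in W$. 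If no such blue edge exists, all these edges are red and an explicit direct construction of $\mathcal{C}^5_{n-1}$ finishes immediately. If one exists, the cycle is opened at $e_i$ into a path $\mathcal{P}$ of $m-2$ edges that is genuinely maximal w.r.t.\ $W$ (so Lemma \ref{there is P3:2} applies as stated, with no cyclic analogue needed), only $k-1$ pairs are processed to build a red path of length $3k-3$, and the red cycle is then \emph{closed by hand} with two or three explicit edges ($f_1f_2$, resp.\ $g_1g_2g_3$) built from the vertices of the deleted edge $e_i$, the witness vertices $u,v$, and leftover $W$-vertices; these closing edges are certified red precisely because a blue one, combined with the blue edge $e$ and the rest of $\mathcal{C}$, would produce a blue $\mathcal{C}^5_m$.

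The genuine gap in your proposal is exactly the step you defer to ``bookkeeping'': the wrap-around. Three things fail to follow from Lemma \ref{there is P3:2} as stated. First, the final piece of your cyclic chain must have \emph{both} end vertices prescribed (one to meet the previous piece, one to meet $\mathcal{Q}_1$), whereas the lemma only lets you move one end vertex, and even that with one forbidden exception; nothing guarantees the two required vertices are simultaneously achievable. Second, the first piece must borrow its connector $u\in A_1$ from the last blue edge of $\mathcal{C}$, which is processed only afterwards, and the free-vertex clause does not let you control \emph{which} vertex of that edge stays free, so a vertex collision between the first and last red pieces is not excluded. Third, in the even case you need a red $\mathcal{P}^5_2$ with both ends prescribed, built from one leftover blue edge and $W$; you give no reason these two specific edges are red, and this is precisely where the paper spends the hypothesis that a particular auxiliary edge $e$ is blue (so that a blue closing edge would complete a blue $\mathcal{C}^5_m$). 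Without the paper's case split and the explicit hand-closure, these three points are not ``routine verification'' --- they are where the content of the lemma lives, and your proposal leaves them open.
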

%%%%%%%%%%%%%%%%%%%%%%%%%%%%
\begin{proof}{
Let
$\mathcal{C}=e_1e_2\ldots e_{m-1}$ be a copy of
$\mathcal{C}_{m-1}^5\subseteq \mathcal{H}_{\rm blue}$ with  edges
\begin{eqnarray*}e_j=\{v_{4j-3},v_{4j-2},v_{4j-1},v_{4j},v_{4j+1}\}\hspace{0.5 cm} (\rm{mod}\ \  4(m-1)),\hspace{0.5 cm} 1\leq j\leq m-1\end{eqnarray*} and
 $W=V(\mathcal{H})\setminus V(\mathcal{C})$. We have two following cases.

\bigskip
\noindent \textbf{Case 1.} For some edge $e_i=\{v_{4i-3},v_{4i-2},v_{4i-1},v_{4i},v_{4i+1}\},$ $1\leq i\leq m-1$, there are
two vertices $u,v\in W$ such that at least one of the edges
$\{v_{4i-1},v_{4i},v_{4i+1},u,v\}$ or
$\{v_{4i-3},v_{4i-2},v_{4i-1},u,v\}$ is blue.

\medskip

\noindent We can  assume that the edge
$e=\{v_{4i-1},v_{4i},v_{4i+1},u,v\}$ is blue.
 Set
\begin{eqnarray*}\mathcal{P}=e_{i+1}e_{i+2}\ldots e_{m-1}e_1e_2\ldots
e_{i-2}e_{i-1}\end{eqnarray*} and
%=f_1f_2\ldots f_{m-2} and
$W_0=W\setminus
\{u,v\}$ (if  the edge  $\{v_{4i-3},v_{4i-2},v_{4i-1},u,v\}$ is
blue,  consider the path
\begin{eqnarray*}
\mathcal{P}=e_{i-1}e_{i-2}\ldots
e_2e_1e_{m-1}\ldots e_{i+2}e_{i+1}
\end{eqnarray*}
 and do the
following process to get a red
copy of $\mathcal{C}_{n-1}^5$).\\

Let  $m=2k+p$, where $p=0,1$.
For $1\leq \ell \leq k-1,$ do  the following process.\\

\noindent{\bf Step 1:}
Set $\mathcal{P}_1=\mathcal{P}=g_1g_2\ldots g_{m-2}$, $W_1=W_0$ and
$\mathcal{P}'_1=\overline{\mathcal{P}}_1=g_1g_2$. Since $\mathcal{P}$ is maximal w.r.t.
$W_1$, using Lemma \ref{there is P3:2}, there is a red path $\mathcal{P}_3^5,$ say
$\mathcal{Q}_1,$ with  end vertices $x_1,y_1$ in $W_1$
 so that  $V(\mathcal{Q}_1)\subseteq V(\mathcal{P}'_1)\cup W_1$ and  $\mathcal{Q}_1$ does not contain a vertex   of $g_2\setminus g_1$,  say $u_1$. Let $W'_1=W_1\cap V(\mathcal{Q}_1).$ By Lemma  \ref{there is P3:2}, we have  $|W'_1|\leq 6$.\\\\
%  and $W'_1\subseteq
%W_1$, $|W'_1|\leq 6$, so that
{\bf Step $\ell$ ($ 2\leq \ell \leq k-1$):} Set
$\mathcal{P}_{\ell}=\mathcal{P}_{\ell-1}\setminus \overline{\mathcal{P}}_{\ell-1}=g_{2\ell-1}g_{2\ell}\ldots g_{m-2}$,
$\mathcal{P}'_{\ell}=\Big((g_{2\ell-1}\setminus
\{f_{\mathcal{P},g_{2\ell-1}}\})\cup\{u_{\ell-1}\}\Big)g_{2\ell}$,
$\overline{\mathcal{P}}_{\ell}=g_{2\ell-1}g_{2\ell}$ and $W_{\ell}=(W_{1}\setminus
\bigcup_{j=1}^{\ell-1}V(\mathcal{Q}_j))\cup\{x_{\ell-1},y_{\ell-1}\}$. Since $\mathcal{P}$ is maximal w.r.t. $W_{\ell},$ using
% Since $\mathcal{P}$ is maximal w.r.t. $W_2$,
 Lemma \ref{there is P3:2}, there is  a  red path $\mathcal{P}_3^5,$ say $\mathcal{Q}_{\ell},$  with   mentioned  properties  so that $V(\mathcal{Q}_{\ell})\subseteq V(\mathcal{P}'_{\ell})\cup W_{\ell}$ and $\mathcal{Q}_{\ell}$ does not contain a vertex of $g_{2\ell}\setminus g_{2\ell-1},$ say $u_{\ell}$. Since each vertex of $W_{\ell},$ with the exception of at most one, can be considered as an end vertex of $\mathcal{Q}_{\ell},$ we may assume that
  $\bigcup_{j=1}^{\ell}\mathcal{Q}_j$
is  a red path with end vertices $x_{\ell}, y_{\ell}$ in $W_{\ell}$.
  Let $W'_{\ell}=W_{1}\cap V(\bigcup_{j=1}^{\ell} \mathcal{Q}_j)$. Clearly $|W'_{\ell}|\leq 5\ell+1$.\\
% and  $\mathcal{Q}_1\cup \mathcal{Q}_2$
%is  a red path with end vertices $x_2, y_2$ in $W_2$.

  Therefore, $\mathcal{Q}=\bigcup_{\ell=1}^{k-1} \mathcal{Q}_{\ell}$ is a red path of length $3k-3$ with end vertices $x_{k-1},
y_{k-1}$ in $W_{k-1}$ and $|W'_{k-1}|=|W_0\cap V(\mathcal{Q})|\leq 5(k-1)+1$. Let $W'=W_0\setminus V(\mathcal{Q})=W_0\setminus W'_{k-1}$.
% and $|W'|=5+i,$ $i=0,1,$
% where $W'=W_0\setminus V(\mathcal{Q})$.
 We have two following  subcases.\\

 \medskip
\noindent {\it Subcase $1$}. $m=2k.$

\medskip
\noindent Since $|W_0|=5k+1,$ we have   $|W'|\geq 5$. Let $\{u_1,u_2,\ldots,u_5\}\subseteq W'$ and $z$ be a vertex
of
$g_{m-2}\setminus g_{m-3}$
 ($e_{i-1}\setminus e_{i-2}$) so that $z\notin V(\mathcal{Q})$. Since there is no blue  copy of
$\mathcal{C}^5_{m}$ and the edge $e$ is blue, the edges
%\begin{eqnarray*}
$$f_1=\{y_{k-1},u_1,u_2,v_{4i-1},z\}, f_2=\{z,v_{4i-2},u,u_3,x_{k-1}\},$$
%\end{eqnarray*}
 are red and $\mathcal{Q}f_1f_2$
 is a red copy of $\mathcal{C}_{n-1}^5$ (note that $n=3k$).

\medskip
\noindent{\it Subcase $2$}. $m=2k+1.$

\medskip
\noindent In this case we have $n=3k+1$ and  $|W'|\geq 6.$ Let $\{u_1,$ $u_2,\ldots,u_6\}\subseteq W'$.  Also we have    $(e_{i-1}\setminus\{f_{\mathcal{P},e_{i-1}}\})\cap V(\mathcal{Q})=\emptyset.$  Since there is no blue copy of
$\mathcal{C}^5_{m}$ and the edge $e$ is blue, the edges
% \begin{eqnarray*}
$$g_1=\{y_{k-1},v_{4i-4},v_{4i-1},u_1,u_2\}, g_2=\{u_2,u_3,
u_4,v_{4i-5},v_{4i}\},$$ $$g_3=\{v_{4i},v_{4i-3},u_5,u_6,x_{k-1}\},$$
%\end{eqnarray*}
are red and $\mathcal{Q}g_1g_2g_3$
 is
a copy of  $\mathcal{C}_{n-1}^5$ in $\mathcal{H}_{\rm red}$.

%\bigskip
%\noindent \textbf{Case 2. } There are an edge
%$e_i=\{v_{4i-3},v_{4i-2},v_{4i-1},v_{4i},v_{4i+1}\}$, $1\leq i\leq
%m-1$, and two vertices $u,v\in W$ such that
%$\{v_{4i-3},v_{4i-2},v_{4i-1},u,v\}$ is blue.

%\medskip
%In this case, consider the path $
%\mathcal{P}_1=e_{i-1}e_{i-2}\ldots e_2e_1e_{m-1}e_{m-2}\ldots
%e_{i+2}e_{i+1}$ and repeat the proof of Case $1$. By
%  an argument similar  we can find a red copy of
%  $\mathcal{C}^5_{n-1}$ and so we are done.

\bigskip
\noindent \textbf{Case 2.} For every edge
$e_i=\{v_{4i-3},v_{4i-2},v_{4i-1},v_{4i},v_{4i+1}\}$, $1\leq i\leq
m-1$, and every vertices $u,v\in W$  the edges
$\{v_{4i-1},v_{4i},v_{4i+1},u,v\}$ and
$\{v_{4i-3},v_{4i-2},v_{4i-1},u,v\}$ are red.

\medskip
\noindent Clearly
\begin{eqnarray*}
 |W|=\left\lbrace \begin{array}{ll}
5k+3
&\mbox{if $m=2k$},\vspace{.5 cm}\\
5k+4 &\mbox{if $m=2k+1$}.
\end{array}\right. \end{eqnarray*}
\noindent  Now let $W'=\{x_1,x_2,\ldots,x_{5k+3}\}\subseteq W$.
For   $0\leq j\leq k-2$ and $1\leq \ell \leq 3$,  set
\begin{eqnarray*}
 f_{3j+\ell}=\left\lbrace \begin{array}{ll}
(e_{2j+1}\setminus
\{v_{8j+4},v_{8j+5}\})\cup\{x_{5j+1},x_{5j+2}\}
&\mbox{if $\ell=1$},\vspace{.5 cm}\\
(e_{2j+1}\setminus
\{v_{8j+1},v_{8j+2}\})\cup\{x_{5j+3},x_{5j+4}\} &\mbox{if $\ell=2$},\vspace{.5 cm}\\
(e_{2j+2}\setminus
\{v_{8j+5},v_{8j+6}\})\cup\{x_{5j+4},x_{5j+5}\} &\mbox{if $\ell=3$},
\end{array}\right. \end{eqnarray*}
where $$e_{2j+1}=\{v_{8j+1},v_{8j+2},v_{8j+3},v_{8j+4},v_{8j+5}\}, e_{2j+2}=\{v_{8j+5},v_{8j+6},v_{8j+7},v_{8j+8},v_{8j+9}\}.$$
Also, let
\begin{eqnarray*}
 f_{3(k-1)+\ell}=\left\lbrace \begin{array}{ll}
(e_{2k-1}\setminus
\{v_{8k-4},v_{8k-3}\})\cup\{x_{5k-4},x_{5k-3}\}
&\mbox{if $\ell=1$},\vspace{.5 cm}\\
(e_{2k-1}\setminus
\{v_{8k-7},v_{8k-6}\})\cup\{x_{5k-2},x_{5k-1}\} &\mbox{if $\ell=2$},\vspace{.5 cm}\\
(e_{2k}\setminus
\{v_{8k-3},v_{8k-2}\})\cup\{x_{5k-1},x_{5k}\} &\mbox{if $\ell=3$ and $m=2k+1$}.\vspace{.5 cm}
\end{array}\right. \end{eqnarray*}
 Clearly for $m=2k,$ $\mathcal{C}'
 =f_1f_2 \ldots f_{3k-1}$
% $\mathcal{C}=e_1^{\prime}e_1^{\prime\prime}e_2^{\prime}e_3^{\prime}e_3^{\prime\prime}e_4^{\prime}e_5^{\prime}\ldots
%e_{2k-2}^{\prime} e_{2k-1}^{\prime}e_{2k-1}^{\prime\prime}$ and
 and for $m=2k+1$, $\mathcal{C}''=f_1f_2 \ldots f_{3k}$
%$\mathcal{C}=e_1^{\prime}e_1^{\prime\prime}e_2^{\prime}e_3^{\prime}e_3^{\prime\prime}e_4^{\prime}e_5^{\prime}\ldots
%e_{2k-2}^{\prime} e_{2k-1}^{\prime}e_{2k-1}^{\prime\prime}e_{2k}^{\prime}$
is   a copy of
 $\mathcal{C}_{n-1}^5$ in $\mathcal{H}_{\rm red}$.
}\end{proof}

%%%%%%%%%%%%%%%%%%%%%%%%%%%%
\begin{lemma}\label{Cn-1 implies small Cm}
Let $n\geq \Big\lfloor\frac{3m}{2}\Big\rfloor$, $6\geq m\geq 4$ and
$\mathcal{H}=\mathcal{K}^5_{4n+\lfloor\frac{m-1}{2}\rfloor}$ be
$2$-edge colored red and blue. If there is no red copy of
$\mathcal{C}^5_{n}$  and $\mathcal{C}=\mathcal{C}^5_{n-1}\subseteq
\mathcal{H}_{\rm red}$, then $\mathcal{C}^5_{m}\subseteq
\mathcal{H}_{\rm blue}$.
\end{lemma}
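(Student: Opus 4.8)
We are given a large complete $5$-uniform hypergraph $\mathcal{H}=\mathcal{K}^5_{4n+\lfloor(m-1)/2\rfloor}$, $2$-colored so that there is no red $\mathcal{C}^5_n$, and we are handed a red copy $\mathcal{C}=\mathcal{C}^5_{n-1}$. The goal is to produce a blue $\mathcal{C}^5_m$, where $m\in\{4,5,6\}$ is small. The natural plan is to look at the set $W=V(\mathcal{H})\setminus V(\mathcal{C})$ of leftover vertices. A short count gives $|V(\mathcal{C})|=4(n-1)$, so $|W|=4n+\lfloor(m-1)/2\rfloor-4(n-1)=4+\lfloor(m-1)/2\rfloor$, which is a small but positive number of ``free'' vertices. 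The key tension we will exploit is that the red cycle is \emph{not extendable} to a red $\mathcal{C}^5_n$: this maximality should force many blue edges, which we will assemble into the desired blue cycle.

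\textbf{The main line of argument.}

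First I would invoke the red-maximality to show that many of the candidate edges joining $W$ to the red cycle $\mathcal{C}$ must be blue. Concretely, for a consecutive triple of edges on $\mathcal{C}$ and vertices $u,v\in W$, the failure of red-extendability (as in the contradiction arguments of Lemmas \ref{spacial configuration2} and \ref{there is P3:2}) forces certain ``crossing'' edges using $u,v$ together with vertices of $\mathcal{C}$ to be blue. I would set this up as a case split parallel to Case 1/Case 2 of Lemma \ref{pm-1 implies pn-1}: either some edge-plus-two-$W$-vertices configuration is blue (giving a ready-made blue link), or else \emph{all} such configurations are red, in which case that very abundance of red edges lets us reroute and build a longer red cycle, contradicting the absence of a red $\mathcal{C}^5_n$. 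The point is that since $m\le 6$, we only need a blue cycle of length at most $6$, so we only need to assemble a constant number of blue edges — we do \emph{not} need the full machinery of Lemma \ref{there is a Pl} chaining many configurations.

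\textbf{Assembling the blue cycle.}

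The construction of the blue $\mathcal{C}^5_m$ itself is a direct splicing. Each blue edge of $\mathcal{C}^5_m$ should use two vertices of $W$ (as its ``ends'') and three vertices drawn from $\mathcal{C}$, chained so that consecutive edges share exactly one vertex in the loose-cycle pattern. Because $m\le 6$ the cycle has at most $6$ edges, consuming at most $2m\le 12$ vertices of $W$; here the hypothesis $n\ge\lfloor 3m/2\rfloor$ together with the exact count $|W|=4+\lfloor(m-1)/2\rfloor$ must be checked to guarantee $W$ is large enough, and this is where the numerology is delicate. I would handle $m=4,5,6$ as explicit small cases (possibly with further subcases on the parity of $m$), exhibiting the edges $f_1,\dots,f_m$ of the blue cycle explicitly in terms of named vertices of $\mathcal{C}$ and $W$, and verifying each is blue either from the blue-edge produced in the first step or from the structural consequences of red non-extendability.

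\textbf{The main obstacle.}

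The hard part will be guaranteeing that \emph{enough} vertices of $W$, and the right vertices of $\mathcal{C}$, are simultaneously available to close the blue cycle into a genuine loose cycle (as opposed to a loose path) — closing the cycle requires the last edge to reconnect to the first shared vertex, which constrains which vertices of $\mathcal{C}$ may be reused. I expect the delicate bookkeeping to lie in the Case 2 analysis, where all the crossing edges are red: there one must show that rerouting cannot be avoided, i.e.\ that the red reroute genuinely yields a red $\mathcal{C}^5_n$ (the full cycle, not merely a long path), which is precisely what contradicts the hypothesis. Keeping the shared-vertex structure consistent around the full cyclic closure, while respecting the exact size of $W$, is the technical crux.
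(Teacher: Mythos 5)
Your overall strategy --- work inside $W=V(\mathcal{H})\setminus V(\mathcal{C})$ with $|W|=4+\lfloor\frac{m-1}{2}\rfloor$, split into cases according to the colours of the ``crossing'' edges $\{v_{4i-1},v_{4i},v_{4i+1},u,v\}$ and $\{v_{4i-3},v_{4i-2},v_{4i-1},u,v\}$, and finish with explicit constructions for $m=4,5,6$ --- is the paper's strategy in outline, and you are right that the chaining machinery of Lemma \ref{there is a Pl} is not needed here. But your dichotomy is set up the wrong way round, and this hides the entire difficulty. The easy case is when \emph{all} such crossing edges are blue: then (and only then) the direct splicing you describe works, giving $h_1h_2\dots h_m$ with consecutive $h_i$'s sharing a vertex of $W$. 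Your branch ``some crossing configuration is blue'' is far too weak to run that construction --- one blue link is not $m$ blue links --- while your branch ``all such configurations are red'' carries no weight: it yields an immediate contradiction (two red crossing edges of the two types on disjoint $W$-pairs splice into $\mathcal{C}$ in place of $e_i$ to give a red $\mathcal{C}^5_n$), so nothing is left to do there. The case that actually does the work is ``some crossing edge is red'': one fixes such a red edge $e$, opens $\mathcal{C}$ into a red path $\mathcal{P}$ that is maximal w.r.t.\ $W$ (because $e$ together with any red extension would close a red $\mathcal{C}^5_n$), and invokes Lemma \ref{spacial configuration2} to extract good blue $\varpi_S$-configurations whose edges each contain \emph{one} vertex of $W$ and four vertices of $\mathcal{P}$; these are then closed into a blue $\mathcal{C}^5_m$ by a case analysis on which vertices of $W$ are end vertices and on whether $z_1,z_2$ lie on the configurations. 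Your proposal never engages with this step, and your structural template for the blue cycle (every edge with two $W$-vertices and three $\mathcal{C}$-vertices) is not the right one in that case.

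The numerology also signals the problem: you budget ``at most $2m\le 12$ vertices of $W$,'' but $|W|\le 6$. A blue loose cycle built entirely from crossing edges fits only because consecutive edges share their $W$-vertex, consuming exactly $m\le|W|$ vertices of $W$; and in the hard case two vertices of $W$ are already committed to the red edge $e$, so even that template fails and the blue edges supplied by Lemma \ref{spacial configuration2} must be used instead. As written, the proposal has a genuine gap precisely where the paper's proof does its work.
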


\begin{proof}{Let
$\mathcal{C}=e_1e_2\ldots e_{n-1}$ be a copy of
$\mathcal{C}_{n-1}^5$ in $\mathcal{H}_{\rm red}$ with  edges
\begin{eqnarray*}e_i=\{v_{4i-3},v_{4i-2},v_{4i-1},v_{4i},v_{4i+1}\}\hspace{0.5 cm} (\rm{mod}\ \  4(n-1)),\hspace{0.5 cm} 1\leq i\leq n-1
\end{eqnarray*} and  $W=V(\mathcal{H})\setminus V(\mathcal{C})$.
We
have two following cases.

\bigskip
\noindent \textbf{Case 1.} For some edge $e_i=\{v_{4i-3},v_{4i-2},v_{4i-1},v_{4i},v_{4i+1}\}$, $1\leq i\leq n-1$,  there are
 vertices $z_1,z_2\in W$  such that at least one of the edges
$\{v_{4i-1},v_{4i},v_{4i+1},z_1,z_2\}$ or
$\{v_{4i-3},v_{4i-2},v_{4i-1},z_1,z_2\}$ is red.

\medskip
\noindent We can suppose  that there are
 vertices $z_1,z_2\in W$  so that the edge
$e=\{v_{4i-1},v_{4i},v_{4i+1},z_1,z_2\}$ is red. Set
\begin{eqnarray*}\mathcal{P}=e_{i+1}e_{i+2}\ldots e_{n-1}e_1e_2\ldots
e_{i-2}e_{i-1}\end{eqnarray*}
 and  $W_0=W\setminus
\{z_1,z_2\}$. If  the edge $\{v_{4i-3},v_{4i-2},v_{4i-1},z_1,z_2\}$ is
red, then  consider the path \begin{eqnarray*}\mathcal{P}=e_{i-1}e_{i-2}\ldots
e_2e_1e_{n-1}\ldots e_{i+2}e_{i+1}\end{eqnarray*} and do the
following process   to get a blue copy of $\mathcal{C}_{m}^5$.\\

%It is not difficult to check that if $m\leq 6,$ there is a blue copy of $\mathcal{C}^5_m$
%(similar to Lemma \ref{cn-1 implies cm for n>m}). So we may assume
First let $m=4$. Hence  we have $n\geq 6$ and $|W_0|=3$. Let $W_0=\{u_1,u_2,u_3\}$.
 Since there is no red copy
of $\mathcal{C}^5_n$, $\mathcal{P}$ is a maximal path w.r.t.
$W$. Use Lemma \ref{spacial configuration2} (by putting $u=f_{\mathcal{P},e_{i-4}}$) to obtain  a good $\varpi_{S}$-configuration, say $C_1$, in $\mathcal{H}_{\rm blue}$ with end vertices in $W$ and $S\subseteq e_{i-4}e_{i-3}e_{i-2}$. Since $C_1$ is a good $\varpi_{S}$-configuration, there is a vertex $w\in e_{i-2}\setminus e_{i-3}$ so that $w\notin V(C_1).$  We have two following cases. \\

\begin{itemize}
\item[(i)] $V(C_1)\cap W_0\neq \emptyset$. \\
Assume that $u_1\in V(C_1)\cap W_0$. Since $|W_0|=3$,   we have $W_0\setminus V(C_1)\neq \emptyset$. Suppose that  $u_2\in W_0\setminus V(C_1)$.
Let $f=\{u_1,w,v_{4i-6},v_{4i-5},u_2\}.$ If the edge  $f$ is blue, then set
\begin{eqnarray*}
&&g_1=\{u_2,v_{4i-3},v_{4i-2},u_3,z_1\},\\
&&g_2=\{u_2,v_{4i-3},v_{4i-2},u_3,z_2\}.
\end{eqnarray*}
Since there is no red copy of $\mathcal{C}_{n}^5$ and the edge $e$ is red, then the edges $g_1$ and $g_2$ are blue.
%\begin{eqnarray*}
%&&\mathcal{C}'=C_1f\{u_2,u_3,z_1,v_{4i-3},v_{4i-2}\},\\
%&&\mathcal{C}''=C_1f\{u_2,u_3,z_2,v_{4i-3},v_{4i-2}\}.
%\end{eqnarray*}
Clearly at least one of $\mathcal{C}'=C_1fg_1$ or  $\mathcal{C}''=C_1fg_2$ is a blue copy of $\mathcal{C}_4^5$. So we may assume that the edge $f$ is red. If $\{z_1,z_2\}\cap V(C_1)\neq \emptyset$, then set $g=\{u_2,v_{4i-4},v_{4i-3},z_1,z_2\}.$ The edge $g$ is blue (otherwise, $fge_ie_{i+1}\ldots e_{n-1}e_1\ldots e_{i-2}$ is a red copy of $\mathcal{C}^5_{n},$ a contradiction). Again, since there is no red copy of $\mathcal{C}_{n}^5$ and the edge $e$ is red,
  $C_1g \{v_{4i-3},v_{4i-2},v_{4i-1},u_3,u_1\}$ is a blue copy of $\mathcal{C}_4^5$. So we may suppose  that  $\{z_1,z_2\}\cap V(C_1)= \emptyset.$ Therefore, $u_1$ and $u_3$ are end vertices of $C_1$ in $W.$ Clearly
\begin{eqnarray*}
C_1\{u_1,z_1,z_2,v_{4i-4},v_{4i-3}\}\{v_{4i-3},v_{4i-2},v_{4i-1},u_2,u_3\}
\end{eqnarray*}
 is a blue copy of $\mathcal{C}_4^5$.

\item[(ii)] $V(C_1)\cap W_0= \emptyset$.\\
Therefore $z_1$ and $z_2$ are end vertices of $C_1$ in $W$. Let $f=\{z_1,w,v_{4i-6},v_{4i-5},u_1\}.$ If the edge  $f$ is blue, then
\begin{eqnarray*}
C_1f\{u_1,u_2,v_{4i-3},v_{4i-2},z_2\},
\end{eqnarray*}
  is a blue copy of $\mathcal{C}_4^5$. Otherwise, since there is no red copy of $\mathcal{C}_{n}^5,$ the edge $g=\{z_1,u_2,u_3,v_{4i-4},v_{4i-3}\}$ is blue and $C_1g\{u_2,u_1,v_{4i-5},v_{4i-2},z_2\}$ is a blue copy of $\mathcal{C}_4^5$.
\end{itemize}

Now let $m=5$. Therefore, we have $n\geq 7$ and $|W_0|=4$. Let $W_0=\{u_1,\ldots,u_4\}$.
 Since there is no red copy
of $\mathcal{C}^5_n$, $\mathcal{P}$ is a maximal path w.r.t.
$W'=W_0\cup \{z_1\}$. Use Lemma \ref{spacial configuration2} (by putting $u=f_{\mathcal{P},e_{i-5}}$) to obtain a good $\varpi_{S}$-configuration, say $C_2$, in $\mathcal{H}_{\rm blue}$ with end vertices in $W'$ and $S\subseteq e_{i-5}e_{i-4}e_{i-3}$. Since $C_2$ is a good $\varpi_{S}$-configuration, there is a vertex $w\in e_{i-3}\setminus e_{i-4}$ so that $w\notin V(C_2).$
%By Lemma \ref{spacial configuration1} each vertex of $W'$ with the exception of at most $3,$ can be considered as an end vertex of $C_2.$
Clearly
 $V(C_2)\cap W_0\neq \emptyset$. By symmetry we may assume that $u_1$ is an end vertex of $C_2$ in $W_0$. Since $|V(C_2)\cap W'|=2$, we have $|W_0\setminus V(C_2)|\geq 2$. Without loss of generality suppose  that $u_2,u_3\in W_0\setminus V(C_2)$. Let $f'=\{u_1,v_{4i-9},v_{4i-10},w,u_2\}$.
  %where $x\in (e_{i-3}\setminus e_{i-4})\setminus V(C_2)$.
  If the edge $f'$ is blue, then set
\begin{eqnarray*}
&&\mathcal{C}'=C_2f'\{u_2,v_{4i-7},v_{4i-6},v_{4i-5},u_3\}\{u_3,u_4,z_1,v_{4i-3},v_{4i-2}\},\\
&&\mathcal{C}''=C_2f'\{u_2,u_3,v_{4i-1},v_{4i-2},v_{4i-3}\}\{v_{4i-3},v_{4i-4},v_{4i-5},z_1,u_4\}.
\end{eqnarray*}
Clearly at least one of $\mathcal{C}'$ or $\mathcal{C}''$ is a blue copy of $\mathcal{C}_5^5$. So we may assume that the edge $f'$ is red. Since there is no red copy of $\mathcal{C}_n^5,$ the edge $g'=\{u_1,u_3,z_2,v_{4i-7},v_{4i-8}\}$ is blue. If the edge $g''=\{u_3,v_{4i-9},v_{4i-6},v_{4i-5},u_2\}$ is blue, then $C_2g'g''\{u_2,v_{4i-3},v_{4i-2},z_1,u_4\}$ is a blue copy of $\mathcal{C}_5^5$. Otherwise,
\begin{eqnarray*}
C_2g'\{u_3,u_2,v_{4i-1},v_{4i-2},v_{4i-3}\}\{v_{4i-3},v_{4i-4},v_{4i-5},z_1,u_4\}
\end{eqnarray*}
 is a blue copy of $\mathcal{C}_5^5$.\\\\

Now let $m=6$. So we have $n\geq 9$ and $|W_0|=4$.   Since $\mathcal{P}$ is
maximal w.r.t. $W$, using Lemma
 \ref{spacial configuration2} there is a good  $\varpi_{S}$-configuration, say $\mathcal{Q}_1$, in $\mathcal{H}_{\rm blue}$ with end vertices $x_1$ and $y_1$ in $W$ so that
 $S\subseteq e_{i-7}\cup e_{i-6}\cup e_{i-5}$  and $\mathcal{Q}_1$ does not contain  a vertex
 of $e_{i-5}\setminus e_{i-6},$ say $u$. Now, Since $\mathcal{P}$ is
maximal w.r.t. $W\setminus\{x_1\}$,  using Lemma \ref{spacial configuration2} there is a good $\varpi_{S}$-configuration, say $\mathcal{Q}_2$, in $\mathcal{H}_{\rm blue}$ with end vertices  in $W\setminus \{x_1\}$ so that
 $S\subseteq \Big((e_{i-4}\setminus\{f_{\mathcal{P},e_{i-4}}\})\cup\{u\}\Big)\cup e_{i-3}\cup e_{i-2}$  and $\mathcal{Q}_2$ does not contain
 a vertex of  $e_{i-2}\setminus e_{i-3},$ say $u'$. Clearly, $\mathcal{Q}_1\cup \mathcal{Q}_2$ is either a blue path of length $4$ or the union of two disjoint blue paths of length $2$. First consider the case $\mathcal{Q}_1\cup \mathcal{Q}_2$ is a blue path, say $\mathcal{Q}$, of length $4$ with end vertices $x_1$ and $y_2$ in $W$. It is easy to see that one of the following cases holds.

\begin{itemize}
\item[(i)]  $|\{x_1,y_2\}\cap \{z_1,z_2\}|=0$\\
In this case, obviously $|V(\mathcal{Q})\cap \{z_1,z_2\}|\leq 1$.
By symmetry we may suppose  that $z_2\notin V(\mathcal{Q})$. Since $|W_0|=4$, we have $|W_0\setminus V(\mathcal{Q})|\geq 1$. Let $u_1\in W_0\setminus V(\mathcal{Q})$. Consider the edge $f=\{y_2,u',v_{4i-6},v_{4i-5},u_1\}$. If the edge $f$ is blue, since there is no red copy of $\mathcal{C}_n^5$ and the edge $e$ is red,
  then $\mathcal{Q}f\{u_1,v_{4i-3},v_{4i-2},z_2,x_1\}$ is a blue copy of $\mathcal{C}_6^5$. Otherwise,
set $T=W\setminus (V(\mathcal{Q})\cup \{u_1\})$. So we have $z_2\in T$ and $|T|=2$. The edge $f'=\{y_2,v_{4i-4},v_{4i-3}\}\cup T$ is blue (otherwise, $ff'e_ie_{i+1}\ldots e_{n-1}e_1\ldots e_{i-2}$ is a red copy of $\mathcal{C}^5_{n},$ a contradiction). Thereby, $\mathcal{Q}f' \{v_{4i-3},v_{4i-2},v_{4i-1},u_1,x_1\}$ is a blue copy of $\mathcal{C}_6^5$.

\item[(ii)]  $|\{x_1,y_2\}\cap \{z_1,z_2\}|=1$\\
 By symmetry we may assume that $y_2=z_1$. Since $|W_0|=4$ and $|W\cap V(\mathcal{Q})|=3$, we have $|W_0\setminus V(\mathcal{Q})|\geq 2$. Let $u_1\in W_0\setminus V(\mathcal{Q})$. Consider the edge $f=\{y_2,u',v_{4i-6},v_{4i-5},u_1\}$. If the edge $f$ is blue, then $\mathcal{Q}f\{u_1,v_{4i-3},v_{4i-2},v_{4i-1},x_1\}$ is a blue copy of $\mathcal{C}_6^5$. Otherwise,
set $T=W\setminus (V(\mathcal{Q})\cup \{u_1\})$.  Clearly, the edge $f'=\{y_2,v_{4i-4},v_{4i-3}\}\cup T$ is blue and thereby, $\mathcal{Q}f' \{v_{4i-3},v_{4i-2},v_{4i-1},u_1,x_1\}$ is a blue copy of $\mathcal{C}_6^5$.

\item[(iii)]  $|\{x_1,y_2\}\cap \{z_1,z_2\}|=2$\\
In this case, clearly $|W_0\setminus V(\mathcal{Q})|=3$. Set $T=W_0\setminus V(\mathcal{Q})=\{u_1,u_2,u_3\}$. If the edge $f=\{y_2,u',v_{4i-6},v_{4i-5},u_1\}$ is blue, then $\mathcal{Q}f\{u_1,v_{4i-3},v_{4i-2},u_2,x_1\}$ is a blue copy of $\mathcal{C}_6^5$. Otherwise, the edge $f'=\{y_2,v_{4i-4},v_{4i-3},u_2,u_3\}$ is blue and  $\mathcal{Q}f' \{u_3,u_1,v_{4i-5},v_{4i-2},x_1\}$ is a blue copy of $\mathcal{C}_6^5$.
\end{itemize}

Now let $\mathcal{Q}_1$ and $\mathcal{Q}_2$ are two disjoint blue paths with end vertices $x_1,y_1$ and $x_2,y_2$ in $W,$ respectively. We have the following cases.

\begin{itemize}
\item[(i)] $|\{x_1,y_1,x_2,y_2\}\cap \{z_1,z_2\}|=0$\\
 Let $f=\{y_1,u',v_{4i-6},v_{4i-5},x_2\}$. If the edge $f$ is blue, since there is no red copy of $\mathcal{C}_n^5$ and the edge $e$ is red,  then
 \begin{eqnarray*}
 \mathcal{Q}_1f\mathcal{Q}_2\{y_2,v_{4i-3},v_{4i-2},v_{4i-1},x_1\},
  \end{eqnarray*}
  is a blue copy of $\mathcal{C}_6^5$. Otherwise, the edge $f'=\{x_1,v_{4i-5},v_{4i-4},v_{4i-3},y_2\}$ is blue (if not, we can find a red copy of $\mathcal{C}_n^5,$ a contradiction). Thereby, $\mathcal{Q}_1f'\mathcal{Q}_2\{x_2,v_{4i-6},v_{4i-2},v_{4i-1},y_1\}$
 is a blue copy of $\mathcal{C}_6^5$.

 \item[(ii)] $|\{x_1,y_1,x_2,y_2\}\cap \{z_1,z_2\}|\geq 1$\\
 By symmetry we may assume that $y_2=z_1.$
 Clearly $|W_0\setminus V(\mathcal{Q}_1\cup \mathcal{Q}_2)|,|W_0 \cap V(\mathcal{Q}_1)|\geq 1.$ We can  without loss of generality suppose that $u_1\in W_0\setminus V(\mathcal{Q}_1\cup\mathcal{Q}_2) $ and $x_1\in W_0 \cap V(\mathcal{Q}_1).$   If the edge $f=\{y_1,u',v_{4i-6},v_{4i-5},y_2\}$ is blue, then set
 \begin{eqnarray*}
 &&g=\{x_2,v_{4i-3},v_{4i-2},u_1,x_1\},\\
 &&g'=\{x_2,v_{4i-3},v_{4i-2},v_{4i-1},x_1\}.
 \end{eqnarray*}
 If $x_2=z_2,$ then $\mathcal{Q}_1f\mathcal{Q}_2g $  is a blue copy of $\mathcal{C}_6^5$ (if not, $gee_{i+1}\ldots e_{n-1}e_1\ldots e_{i-1}$ is a red copy of $\mathcal{C}_n^5,$ a contradiction). Otherwise, the edge $g'$ is blue (if not, $g'ee_{i+1}\ldots e_{n-1}e_1\ldots e_{i-1}$ is a red copy of $\mathcal{C}_n^5$) and
  $\mathcal{Q}_1f\mathcal{Q}_2g'$ is a blue copy of $\mathcal{C}_6^5$.\\
  Therefore, we may assume that the edge $f$ is red. Since there is no red copy of $\mathcal{C}_n^5,$ the edges $h_1=\{x_1,v_{4i-5},v_{4i-4},v_{4i-3},x_2\}$ and $h_2=\{y_1,v_{4i-4},v_{4i},u_1,x_2\}$ are blue (if the edge $h_j,$ $1\leq j \leq 2,$  is red, then $fh_je_{i}\ldots e_{n-1}e_1\ldots e_{i-2}$ is a red copy of $\mathcal{C}_n^5$).
 If $y_1\neq z_2,$ then $\mathcal{Q}_1h_1\mathcal{Q}_2 \{y_2,u_1,v_{4i-2},v_{4i-6},y_1\}$ is a blue copy of $\mathcal{C}_6^5$. If $y_1=z_2,$ then there is a vertex $u_2\in W_0\setminus(V(\mathcal{Q}_1\cup \mathcal{Q}_2)\cup\{u_1\})$ and clearly $\mathcal{Q}_1h_2\mathcal{Q}_2 \{y_2,v_{4i-3},v_{4i-2},u_2,x_1\}$ is a blue copy of $\mathcal{C}_6^5$.

\end{itemize}

\bigskip
\noindent \textbf{Case 2. } For every edge
$e_i=\{v_{4i-3},v_{4i-2},v_{4i-1},v_{4i},v_{4i+1}\}$, $1\leq i\leq
n-1$, and every vertices $z_1,z_2\in W$  the edges $\{v_{4i-1},v_{4i},v_{4i+1},z_1,z_2\}$
and $\{v_{4i-3},v_{4i-2},v_{4i-1},z_1,z_2\}$ are blue.

\medskip
\noindent Let $W=\{u_1,u_2,\ldots,u_{\lfloor\frac{m-1}{2}\rfloor+4}\}$.
Set
\begin{eqnarray*}
h_i=\left\lbrace \begin{array}{ll}
(e_i\setminus \{v_{4i},v_{4i+1}\})\cup \{u_i,u_{i+1}\}
&\mbox{for $1\leq i \leq m-1$},\vspace{.5 cm}\\
(e_i\setminus \{v_{4i},v_{4i+1}\})\cup \{u_i,u_{1}\} &\mbox{for $i=m$}.
\end{array}\right.
 \end{eqnarray*}
Since $4\leq m \leq 6$ and  $h_i$'s, $1\leq i \leq m$, are blue, then   $h_1h_2\ldots h_m$ is a blue $\mathcal{C}^5_m$.

}\end{proof}
%%%%%%%%%%%%%%%%%%%%%%%%%%%%

\begin{lemma}\label{Cn-1 implies cm}
Let $n\geq \Big\lfloor\frac{3m}{2}\Big\rfloor$, $m\geq 7$ and
$\mathcal{H}=\mathcal{K}^5_{4n+\lfloor\frac{m-1}{2}\rfloor}$ be
$2$-edge colored red and blue. If there is no red copy of
$\mathcal{C}^5_{n}$  and $\mathcal{C}=\mathcal{C}^5_{n-1}\subseteq
\mathcal{H}_{\rm red}$, then $\mathcal{C}^5_{m}\subseteq
\mathcal{H}_{\rm blue}$.
\end{lemma}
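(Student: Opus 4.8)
The plan is to mirror the structure of Lemma~\ref{Cn-1 implies small Cm} but to exploit the larger value of $m$ ($m\geq 7$), which lets me build the blue cycle $\mathcal{C}^5_m$ out of blue configurations produced by a maximal red path, chained together. As in the previous lemma, I fix a red copy $\mathcal{C}=e_1e_2\ldots e_{n-1}$ and set $W=V(\mathcal{H})\setminus V(\mathcal{C})$, whose size is $4n+\lfloor\frac{m-1}{2}\rfloor-4(n-1)=\lfloor\frac{m-1}{2}\rfloor+4$. I split into the same two cases: Case~2, where every ``boundary'' quintuple on an edge $e_i$ together with two vertices of $W$ is blue, is handled verbatim as before---the $h_i$'s defined by swapping out the last two vertices of each $e_i$ for two vertices of $W$ form a blue $\mathcal{C}^5_m$, and since $|W|=\lfloor\frac{m-1}{2}\rfloor+4\geq m$ for the relevant range this is just a direct construction. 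So the substance is Case~1.

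\medskip

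\noindent In Case~1 I may assume some edge $e_i$ has $e=\{v_{4i-1},v_{4i},v_{4i+1},z_1,z_2\}$ red with $z_1,z_2\in W$ (the symmetric subcase is handled by reversing orientation, exactly as in the earlier proof). I then form the maximal red path $\mathcal{P}=e_{i+1}\ldots e_{n-1}e_1\ldots e_{i-1}$ and put $W_0=W\setminus\{z_1,z_2\}$, so $|W_0|=\lfloor\frac{m-1}{2}\rfloor+2$. Because there is no red $\mathcal{C}^5_n$, the path $\mathcal{P}$ is maximal w.r.t.\ $W$, so Lemma~\ref{there is a Pl} applies: I extract from $\mathcal{P}$ two disjoint blue paths $\mathcal{Q},\mathcal{Q}'$ (or one blue path $\mathcal{Q}$ with $\|\mathcal{Q}'\|=0$) with $\|\mathcal{Q}\cup\mathcal{Q}'\|$ as large as the lemma permits, whose end vertices lie in a set $W'\subseteq W$, consuming the edges of $\overline{\mathcal{P}}=e_{i+1}\ldots e_{n-r}$ for some small $r$ with $0\le r\le 2$. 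The remaining ``free'' vertices of $W$ together with the red edge $e$ and the two or three uncovered edges near $e_{i-1}$ are what I use to \emph{close up} $\mathcal{Q}\cup\mathcal{Q}'$ into a single blue loose cycle of the right length.

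\medskip

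\noindent The closing step is where the case analysis lives, and it splits according to the parity $m=2k+p$ ($p\in\{0,1\}$) and according to whether Lemma~\ref{there is a Pl} returns one blue path or two. The target is a blue $\mathcal{C}^5_m$, so I must arrange that $\|\mathcal{Q}\cup\mathcal{Q}'\|$ plus the number of connecting edges I insert equals exactly $m$; the bookkeeping $\|\mathcal{Q}\cup\mathcal{Q}'\|=2(|W'|-2)$ or $2(|W'|-1)$ from the lemma is what makes the arithmetic close, using $|W|=\lfloor\frac{m-1}{2}\rfloor+4$. When there are two blue paths I bridge the gap between an end vertex $y$ of one and an end vertex $x'$ of the other through a blue edge built from $\{y,x',w,\ldots\}$ where $w$ is a free vertex of $W$; if that edge is red, maximality of $\mathcal{P}$ forces an alternative blue edge through the skeleton vertices $v_{4i-\cdot}$ near $e$, and one of the two resulting closures is the desired blue $\mathcal{C}^5_m$ (the red copy of $\mathcal{C}^5_n$ that the ``bad'' choice would produce is the contradiction I repeatedly invoke, exactly as in the $m\le 6$ proof). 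The main obstacle is purely combinatorial accounting: I must verify that the free vertices of $W$ not consumed by $\mathcal{Q}\cup\mathcal{Q}'$ suffice---at least the two or three needed to build the bridging and closing edges---and that this holds uniformly across the four parity/path-count sub-cases and across the $0\le r\le 2$ slack. This is the step I expect to be delicate, since it is precisely the place where the hypothesis $n\geq\lfloor\frac{3m}{2}\rfloor$ (ensuring $\mathcal{P}$ is long enough to host the full blue path, and $W$ large enough to close it) is consumed.
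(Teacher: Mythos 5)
Your outline of Case 1 follows the paper's route (maximal red path, Lemma~\ref{there is a Pl}, then closing the blue paths into a cycle), but there are two concrete gaps. The more serious one is Case 2. You claim it is ``handled verbatim as before'' because $|W|=\lfloor\frac{m-1}{2}\rfloor+4\geq m$, but this inequality fails for every $m\geq 8$ (e.g.\ $m=8$ gives $|W|=7<8$). The cyclic construction $h_1h_2\ldots h_m$ needs $m$ distinct vertices of $W$ (one per consecutive overlap), so the direct construction only works for $m=7$, where $|W|=7$. For $m\geq 8$ the paper has to run an entirely separate argument in Case 2: it applies Lemma~\ref{there is a Pl} a second time to the path $e_1e_2\ldots e_{n-3}$ and carries out its own case analysis on $\|\mathcal{Q}'\|$ and $|T|=|W_0\setminus W'|$, using the all-blue hypothesis of Case 2 to build the closing edges out of $e_{n-3},e_{n-2},e_{n-1}$. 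This is roughly half of the paper's proof and is missing from your plan.

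The second gap is in your accounting for Case 1. Lemma~\ref{there is a Pl} does \emph{not} guarantee $0\leq r\leq 2$ in general; that bound holds only when enough vertices of $W$ are left over ($x\geq 4$ resp.\ $x\geq 2$). In the subcases where $|T|=|W_0\setminus W'|\geq 2$, the blue paths satisfy $\ell'=\|\mathcal{Q}\cup\mathcal{Q}'\|\leq 2\lfloor\frac{m-1}{2}\rfloor-2$, i.e.\ they are four or more edges short of $m$, and $r$ can be as large as $7$. There your plan of inserting one or two bridging edges cannot reach length $m$: the paper has to manufacture one or even two additional good $\varpi_S$-configurations via Lemma~\ref{spacial configuration2} on the uncovered tail $e_{n-r+1},\ldots$ of $\mathcal{P}$ (using the maximality of $\ell'$ to control which vertices can serve as their end vertices) before the cycle can be closed. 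Without that extra mechanism the length arithmetic does not close in the subcases $|T|\geq 2$, so the proof as proposed is incomplete.
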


%%%%%%%%%%%%%%%%%%%%%%%%%%%%
\begin{proof}{Let
$\mathcal{C}=e_1e_2\ldots e_{n-1}$ be a copy of
$\mathcal{C}_{n-1}^5$ in $\mathcal{H}_{\rm red}$ with  edges
\begin{eqnarray*}e_i=\{v_{4i-3},v_{4i-2},v_{4i-1},v_{4i},v_{4i+1}\}\hspace{0.5 cm} (\rm mod\ \  4(n-1)),\hspace{0.5 cm} 1\leq i\leq n-1.
\end{eqnarray*} Also, let $W=V(\mathcal{H})\setminus V(\mathcal{C})$. We
have the following cases:

\bigskip
\noindent \textbf{Case 1. } For some edge $e_i=\{v_{4i-3},v_{4i-2},v_{4i-1},v_{4i},v_{4i+1}\}$, $1\leq i\leq n-1$,  there are
 vertices $z_1,z_2\in W$ and $v'\in e_{i+1}\setminus\{l_{\mathcal{C},e_{i+1}}\}$ (resp. $z_1,z_2\in W$ and $v'\in e_{i-1}\setminus\{f_{\mathcal{C},e_{i-1}}\}$) such that the edge
$\{v_{4i-1},v_{4i},v',z_1,z_2\}$ (resp. the edge
$\{v',v_{4i-2},v_{4i-1},z_1,z_2\}$) is red.

\medskip
 We can without loss of generality assume that there are
 vertices $z_1,z_2\in W$ and $v'\in e_{i+1}\setminus\{l_{\mathcal{C},e_{i+1}}\}$ so that the edge
$e=\{v_{4i-1},v_{4i},v',z_1,z_2\}$ is red. Set
\begin{eqnarray*}\mathcal{P}=e_{i+1}e_{i+2}\ldots e_{n-1}e_1e_2\ldots
e_{i-2}e_{i-1}\end{eqnarray*}
 and  $W_0=W\setminus
\{z_1,z_2\}$. If there are
 vertices $z_1,z_2\in W$ and $v'\in e_{i-1}\setminus\{f_{\mathcal{C},e_{i-1}}\}$ so that the edge $\{v',v_{4i-2},v_{4i-1},z_1,z_2\}$ is
red,  consider the path \begin{eqnarray*}\mathcal{P}=e_{i-1}e_{i-2}\ldots
e_1e_{n-1}e_{n-2}\ldots e_{i+2}e_{i+1}\end{eqnarray*} and repeat the
following process  to get a blue copy of $\mathcal{C}_{m}^5$.\\

%It is not difficult to check that if $m\leq 6,$ there is a blue copy of $\mathcal{C}^5_m$
%(similar to Lemma \ref{cn-1 implies cm for n>m}). So we may assume
Since $m\geq 7$,  we have  $|W_0|=\lfloor\frac{m-1}{2}\rfloor+2 \geq 5$. Also,  since there is no red copy
of $\mathcal{C}^5_n$, $\mathcal{P}$ is a maximal path w.r.t.
$W_0$. Applying Lemma \ref{there is a Pl}, there are two
disjoint blue paths
 between
 $\overline{\mathcal{P}}$, the path obtained from $\mathcal{P}$ by deleting the last $r$ edges for
 some $r\geq 0,$ and $W'\subseteq W_0$ with the mentioned properties. Consider the paths
 $\mathcal{Q}$ and $\mathcal{Q}'$ with $\|\mathcal{Q}\|\geq \|\mathcal{Q}'\|$
  so that $\ell'=\|\mathcal{Q}\cup\mathcal{Q}'\|$ is maximum. Among these paths,  choose  $\mathcal{Q}$ and $\mathcal{Q}'$, where $\|\mathcal{Q}\|$ is maximum. Since $\|\mathcal{P}\|=n-2,$  by Lemma \ref{there is a Pl}, we have  $\ell'=\frac{2}{3}(n-2-r)$.\\
  %Without loss of generality we may assume that $\|\mathcal{Q}\|\geq \|\mathcal{Q}'\|.$
  %Among these paths choose $\mathcal{Q}$ and $\mathcal{Q}'$, where $\|\mathcal{Q}\|$ is maximum.
  %By Lemma \ref{there is a Pl},

\noindent {\it Subcase $1$.} $\|\mathcal{Q}'\|\neq 0$.\\
 %Now we may assume that  $\|\mathcal{Q}'\|\neq 0$.
Set $T=W_0\setminus W'.$ Let  $x,y$ and $x',y'$ be the end
vertices of $\mathcal{Q}$ and $\mathcal{Q}'$ in $W',$
respectively. Using Lemma \ref{there is a Pl} we have one of
the following cases:

%Now we may assume that there is two blue path $\mathcal{Q}'$ and
%$\mathcal{Q}''$ between $\overline{\mathcal{P}}$ and  $W'$. Let
%$x',y'$ and $x'',y''$ be end vertices of $\mathcal{Q}'$ and
%$\mathcal{Q}''$ in $W',$ respectively,
% $T=W_0\setminus V(\mathcal{Q})$ and $b=|T|$. Using Lemma \ref{there is a Pl} we have one of  the following
% subcases.\\

\medskip
%{\it Subcase $1$}. $b=0.$
\begin{itemize}
\item[I.] $|T|\geq 2.$

\medskip
In this case, we have $\ell'\leq 2\lfloor\frac{m-1}{2}\rfloor-4$ and so
 $r\geq 4$. Therefore,  this case
does not occur by Lemma \ref{there is a Pl}.\\

%\begin{itemize}
\item[II.] $|T|=1.$

\medskip
Let $T=\{u_1\}$. Clearly $\ell'=2\lfloor\frac{m-1}{2}\rfloor-2$. If
 $m$ is odd, then $\ell'=m-3$ and $r\geq 2$. Set $h=\{y,w,v_{4i-10},v_{4i-9},x'\}$,  where $w\in (e_{i-3}\setminus
 e_{i-4})\setminus V(\mathcal{Q}\cup \mathcal{Q}')$ (the existence of $w$ is guaranteed by Lemma \ref{there is a Pl}). If $h$ is blue, then set
 \begin{eqnarray*}
&&g_1=\{{y'},v_{4i-7},v_{4i-6},v_{4i-5},u_1\},\\
&&g_2=\{{y'},z_1,z_2,v_{4i-4},v_{4i-3}\}.
\end{eqnarray*}
If the edge $g_1$ is blue, Since there is no red copy of $\mathcal{C}_n^5,$
\begin{eqnarray*}
\mathcal{Q}h{\mathcal{Q}'}g_1\{u_1,v_{4i-3},v_{4i-2},v_{4i-1},x\},
\end{eqnarray*}
 is a blue copy of $\mathcal{C}_m^5$. So we may assume that the edge $g_1$ is red. Then the edge $g_2$ is blue (otherwise, $g_1g_2e_i e_{i+1}\ldots e_{n-1}e_1 \ldots e_{i-2}$ is a red copy of $\mathcal{C}_n^5$, a contradiction)  and $\mathcal{Q}h{\mathcal{Q}'}g_2\{v_{4i-3},v_{4i-2},v_{4i-1},u_1,x\}$ is a blue copy of $\mathcal{C}_m^5$.
% \begin{eqnarray*}
%&&\mathcal{C}_1=\mathcal{Q}h{\mathcal{Q}'}\{{y'},v_{4i-7},v_{4i-6},v_{4i-5},u_1\}\{u_1,v_{4i-3},v_{4i-2},v_{4i-1},x\},\\
%&&\mathcal{C}_2=\mathcal{Q}h{\mathcal{Q}'}\{{y'},z_1,z_2,v_{4i-4},v_{4i-3}\}\{v_{4i-3},v_{4i-2},v_{4i-1},u_1,x\}.
%\end{eqnarray*}
%Clearly at least one of $\mathcal{C}_1$ or $\mathcal{C}_2$ is a blue copy of $\mathcal{C}_m^5$.
So we may assume that the edge $h$ is red. Since there is no red copy of $\mathcal{C}_n^5$ the edge $h'=\{x,v_{4i-9},v_{4i-8},v_{4i-7},{y'}\}$ is blue (if not, $hh'e_{i-1}e_i\ldots e_{n-1}e_{1}\ldots e_{i-3}$ is a red copy of $\mathcal{C}_n^5$, a contradiction). Now set
\begin{eqnarray*}
&&h_1=\{x',v_{4i-10},v_{4i-6},v_{4i-5},u_1\},\\
&&h_2=\{x',z_1,z_2,v_{4i-4},v_{4i-3}\}.
\end{eqnarray*}
If the edge $h_1$ is blue, then $\mathcal{Q}h'{\mathcal{Q}'}h_1\{u_1,v_{4i-3},v_{4i-2},v_{4i-1},y\}$ is a blue copy of $\mathcal{C}_m^5$.
 Therefore, we may assume that the edge $h_1$ is red. Since there is no red copy of $\mathcal{C}_n^5,$ the edge $h_2$ is blue and $\mathcal{Q}h'{\mathcal{Q}'}h_2\{v_{4i-3},v_{4i-2},v_{4i-1},u_1,y\}$ is a blue copy of $\mathcal{C}_m^5.$
% So at least one of $\mathcal{C}_3$ or $\mathcal{C}_4$ is a blue copy of $\mathcal{C}_m^5$, where
%\begin{eqnarray*}
%&&\mathcal{C}_3=\mathcal{Q}h'{\mathcal{Q}'}\{x',v_{4i-10},v_{4i-6},v_{4i-5},u_1\}\{u_1,v_{4i-3},v_{4i-2},v_{4i-1},y\},\\
%&&\mathcal{C}_4=\mathcal{Q}h'{\mathcal{Q}'}\{x',z_1,z_2,v_{4i-4},v_{4i-3}\}\{v_{4i-3},v_{4i-2},v_{4i-1},u_1,y\}.
%\end{eqnarray*}
% where $w\in (e_{i-3}\setminus
% e_{i-4})\setminus(\mathcal{Q}\cup\mathcal{Q}')$. At least on of $\mathcal{C}_i$'s, $1\leq i\leq 4$,
%is desired blue $\mathcal{C}_m^5$.\\

Now let $m$ be even. Therefore,  $\ell'=m-4$ and $r\geq 4.$
 Let $u$ be a vertex of $e_{i-5}\setminus e_{i-6}$ so
that $u\notin V(\mathcal{Q}\cup \mathcal{Q}')$ (the existence of $u$ is guaranteed by Lemma \ref{there is a Pl}). Set
 \begin{eqnarray*}
 \overline{W}=\{x,y,x',y',z_1,z_2,u_1\}.
 \end{eqnarray*}
 Using Lemma  \ref{spacial configuration2} there is a good $\varpi_{S}$-configuration in $\mathcal{H}_{\rm blue}$, say $C_1$, with
 end vertices in $\overline{W}$  so that
  $S\subseteq ((e_{i-4}\setminus
\{f_{\mathcal{P},e_{i-4}}\})\cup\{u\})e_{i-3}e_{i-2}$.
  Let $w\in
e_{i-2}\setminus e_{i-3}$ so that $w\notin C_1$.
Using Lemma \ref{spacial configuration2} and since $\ell'$ is maximum,    we may assume that $y'$ and $z_1$ are end vertices of $C_1.$
Let $h=\{z_1,v_{4i-5},v_{4i-6},w,x\}$. If the edge $h$ is blue then
\begin{eqnarray*}
\mathcal{Q}'C_1 h{\mathcal{Q}}\{y,v_{4i-3},v_{4i-2},v_{4i-1},x'\}
\end{eqnarray*}
 is a blue copy of
$\mathcal{C}^5_m$. Otherwise, the edge $h'=\{z_1,z_2,v_{4i-4},v_{4i-3},y\}$ is blue and clearly
\begin{eqnarray*}\mathcal{Q}'C_1 h'{\mathcal{Q}}\{x,v_{4i-5},v_{4i-2},v_{4i-1},x'\}
\end{eqnarray*} is a  copy of $\mathcal{C}^5_m$ in $\mathcal{H}_{\rm blue}$. \\

%one of the following  cases holds:\\
%
%  {\it (i)  $y$ and $x'$ are the end vertices of $C_1$.}
%
%\medskip
%Clearly at least one of $\mathcal{C}'$ or $\mathcal{C}''$ is a blue copy of  $\mathcal{C}^5_m$, where
%\begin{eqnarray*}
%\mathcal{C}'=\mathcal{Q}C_1{\mathcal{Q}'}\{{y'},w,v_{4i-6},v_{4i-5},u_1\}\{u_1,v_{4i-3},v_{4i-2},v_{4i-1},x\},\\
%\mathcal{C}''=\mathcal{Q}C_1{\mathcal{Q}'}\{{y'},z_1,z_2,v_{4i-4},v_{4i-3}\}\{v_{4i-3},v_{4i-2},v_{4i-1},u_1,x\}.
%\end{eqnarray*}
%
%
%   {\it (ii)  $y'$ and $u_1$ are the end vertices of $C_1$.}
%
%\medskip
%Let $h=\{u_1,v_{4i-5},v_{4i-6},w,x\}$. If the edge $h$ is blue then
%\begin{eqnarray*}
%\mathcal{Q}'C_1 h{\mathcal{Q}}\{y,v_{4i-3},v_{4i-2},v_{4i-1},x'\}
%\end{eqnarray*}
% is a blue copy of
%$\mathcal{C}^5_m$. Otherwise the edge $h'=\{u_1,z_2,v_{4i-4},v_{4i-3},y\}$ is blue and clearly
%\begin{eqnarray*}\mathcal{Q}'C_1 h'{\mathcal{Q}}\{x,v_{4i-5},v_{4i-2},v_{4i-1},x'\}
%\end{eqnarray*} is a  copy of $\mathcal{C}^5_m$ in $\mathcal{H}_{\rm blue}$. \\
%
%
%{\it (iii)  $y'$ and $z_1$ are the end vertices of $C_1$.}
%
% The proof of this  case is similar to the proof of  Case $(ii)$. So we emit it here.\\

\item[III.] $|T|=0.$

\medskip
We can clearly observe that $\ell'=2\lfloor\frac{m-1}{2}\rfloor$. If
$m$ is even, then $\ell'=m-2$ and $r\geq 1$. By Lemma \ref{there is a Pl}, there is a vertex
 $w\in
e_{i-2}\setminus e_{i-3}$ so that $w\notin V(\mathcal{Q}\cup\mathcal{Q}')$. Since there is no red copy of $\mathcal{C}_n^5$ and the edge $e$ is red, the edges
\begin{eqnarray*}
&&g=\{y',
v_{4i-3},v_{4i-2},v_{4i-1}, x\},\\
&&g'=\{x',
v_{4i-3},v_{4i-2},v_{4i-1}, y\},
\end{eqnarray*}
are blue.
If the edge $f=\{y,w,v_{4i-6},v_{4i-5},x'\}$ is blue, then $\mathcal{Q}f\mathcal{Q}'g$ is a blue copy of  $\mathcal{C}^5_m$. Therefore, we may assume that the edge $f$ is red.
Thereby  the edge $f'=\{x,v_{4i-5},v_{4i-4},v_{4i},y'\}$ is blue (if not, $ff' e_i\ldots e_{n-1}e_{1}\ldots e_{i-2}$ is a red copy of $\mathcal{C}_n^5$, a contradiction) and $\mathcal{Q}f'\mathcal{Q}'g' $
%\begin{eqnarray*}
%\mathcal{Q}\{y,w,v_{4i-6},v_{4i-5},x'\}\mathcal{Q}'\{y',
%v_{4i-3},v_{4i-2},v_{4i-1}, x\}
%\end{eqnarray*}
%\vspace{-0.3 cm}
% or
%\vspace{-0.2 cm}
% \begin{eqnarray*}
%\mathcal{Q}\{x,v_{4i-5},v_{4i-4},v_{4i},y'\}\mathcal{Q}'\{x',
%v_{4i-3},v_{4i-2},v_{4i-1},y\}
%\end{eqnarray*}
%\vspace{-0.9 cm}
 is desired $\mathcal{C}^5_m$.

If $m$ is odd, then $\ell'=m-1$. Remove the last two edges of $\mathcal{Q}\cup \mathcal{Q}'$ to get two disjoint blue  paths $\overline{\mathcal{Q}}$ and $\overline{\mathcal{Q}'}$ so that $\|\overline{\mathcal{Q}}\cup \overline{\mathcal{Q}'}\|=m-3$ and  $(\overline{\mathcal{Q}}\cup \overline{\mathcal{Q}'})\cap (e_{i-2}\cup e_{i-1})=\emptyset$. We can without loss of generality  assume that $\mathcal{Q}=\overline{\mathcal{Q}}$. First let $\|\overline{\mathcal{Q}'}\|>0$ and $x',y''$ with  $y''\neq y'$ be the end vertices of $\overline{\mathcal{Q}'}$ in $W'$. Since there is no red copy of $\mathcal{C}_n^5$ and the edge $e$ is red, the edges
\begin{eqnarray*}
&&g_1=
\{y',v_{4i-3},v_{4i-2},v_{4i-1},x\},\\
&&g_2=\{y',v_{4i-3},v_{4i-2},v_{4i-1},y\},
\end{eqnarray*}
are blue. Consider the edge $h_1=\{y,v_{4i-11},v_{4i-10},v_{4i-9},x'\}.$ If the edge $h_1$ is blue, then at least one of $\mathcal{C}_1$ or $\mathcal{C}_2$ is the desired blue cycle, where
\begin{eqnarray*}
&&\mathcal{C}_1=\mathcal{Q}h_1\overline{\mathcal{Q}'}\{y'',v_{4i-7},v_{4i-6},v_{4i-5},y'\}g_1,\\
&&\mathcal{C}_2=\mathcal{Q}h_1\overline{\mathcal{Q}'}\{y'',z_1,z_2,v_{4i-4},v_{4i-3}\}g_1.\\
\end{eqnarray*}
So we may assume that the edge $h_1$ is red. Then, since there is no red copy of $\mathcal{C}_n^5,$ the edge $h_2=\{x,v_{4i-9},v_{4i-8},v_{4i-7},y''\}$ is blue. Clearly, at least one of $\mathcal{C}_3$ or $\mathcal{C}_4$ is the desired blue cycle, where
 \begin{eqnarray*}
&&\mathcal{C}_3=\mathcal{Q}h_2\overline{\mathcal{Q}'}\{x',v_{4i-10},v_{4i-6},v_{4i-5},y'\}g_2,\\
&&\mathcal{C}_4=\mathcal{Q}h_2\overline{\mathcal{Q}'}\{x',z_1,z_2,v_{4i-4},v_{4i-3}\}g_2.
\end{eqnarray*}
If $\|\overline{\mathcal{Q}'}\|=0,$ by some discussions similar to the above, we can find a blue copy of $\mathcal{C}_m^5.$
 So
we omit it's proof here.\\
%
%It is easy to check that $\mathcal{Q} h_1 \overline{\mathcal{Q}'} h_2 h_3$ is a blue copy of $\mathcal{C}_m^5$, where

%Let $\overline{Q}$ and
%$\overline{\mathcal{Q}'}$ are two disjoint blue paths obtained by
%removing the last two edges  of $\mathcal{Q}\cup \mathcal{Q}'$. So
%we have one of
% two following cases.\\
%
%{\it i) $\mathcal{Q}=\overline{\mathcal{Q}}$ and
%$\|\overline{\mathcal{Q}'}\|>0$.}
%
%\medskip
%Let $x',\overline{y'}$ be end vertices of
%$\overline{\mathcal{Q}'}$ in $W'$. Set
% \begin{eqnarray*}
%&&\mathcal{C}_1=\mathcal{Q}\{y,v_{4i-11},v_{4i-10},v_{4i-9},x'\}\overline{\mathcal{Q}'}\{\overline{y'},v_{4i-7},v_{4i-6},v_{4i-5},y'\}\{y',v_{4i-3},v_{4i-2},v_{4i-1},x\},\\
%&&\mathcal{C}_2=\mathcal{Q}\{y,v_{4i-11},v_{4i-10},v_{4i-9},x'\}\overline{\mathcal{Q}'}\{\overline{y'},z_1,z_2,v_{4i-4},v_{4i-3}\}\{v_{4i-3},v_{4i-2},v_{4i-1},y',x\},\\
%&&\mathcal{C}_3=\mathcal{Q}\{x,v_{4i-9},v_{4i-8},v_{4i-7},\overline{y'}\}\overline{\mathcal{Q}'}\{x',v_{4i-10},v_{4i-6},v_{4i-5},y'\}\{y',v_{4i-3},v_{4i-2},v_{4i-1},y\},\\
%&&\mathcal{C}_4=\mathcal{Q}\{x,v_{4i-9},v_{4i-8},v_{4i-7},\overline{y'}\}\overline{\mathcal{Q}'}\{x',y',v_{4i-1},v_{4i-2},v_{4i-3}\}\{v_{4i-3},v_{4i-4},v_{4i-5},z_1,y\}.
%\end{eqnarray*}
%Clearly, at least on of $\mathcal{C}_i$'s, $1\leq i\leq 4$,
%is desired blue $\mathcal{C}_m^5$.\\

\end{itemize}

\medskip
\noindent{\it Subcase $2$}. $\|\mathcal{Q}'\|=0$.

\medskip
% First, assume that $\|\mathcal{Q}'\|=0$.
\noindent Let $x$ and $y$ be the
end vertices of $\mathcal{Q}$ in $W'$ and $T=W_0\setminus W'$. Using Lemma \ref{there is a Pl}, we have one of the following cases:

%let $\mathcal{Q}'$ be a blue path of length $m-1$ with end
%vertices $x,y'$ in $W'$ by removing the last two edges of
%$\mathcal{Q}$. So
%$\mathcal{Q}'\{x,v_{4i-3},v_{4i-2},v_{4i-1},y'\}$ is the desired
%blue cycle.

\begin{itemize}
\item[I.] $|T|\geq 4$.

\medskip
Since $\ell'\leq 2\lfloor\frac{m-1}{2}\rfloor-6$ and $\ell'=\frac{2}{3}(n-2-r),$
we have $r\geq 3$. So this subcase
does not occur by Lemma \ref{there is a Pl}.\\

\item[II.] $|T|=3.$

\medskip
Let $T=\{u_1,u_2,u_3\}$. Clearly
$\ell'=2\lfloor\frac{m-1}{2}\rfloor-4$. First let $m$ be odd. Therefore,  $\ell'=m-5.$ Since $\ell'=\frac{2}{3}(n-2-r)$ and $n\geq \lfloor\frac{3m}{2}\rfloor,$  we have $r\geq 5.$ By Lemma \ref{there is a Pl}, there is a vertex  $v\in e_{i-6}\setminus e_{i-7}$ so that $v\notin
 V(\mathcal{Q})$. Since $\mathcal{P}$ is maximal w.r.t.
 $\overline{W}=\{x,y,z_1,z_2,u_1,u_2,u_3\}$, using Lemma  \ref{spacial configuration2},
 %and
% $v\in e_{i-6}\setminus e_{i-7}$ be a vertex so that $v\notin
% V(\mathcal{Q})$ (the existence of $v$ is guaranteed by Lemma \ref{there is a Pl}).
%Using Lemma \ref{spacial configuration2},
 there is a good $\varpi_{S}$-configuration, say $C_1=fg$, in $\mathcal{H}_{\rm blue}$ with end vertices $\overline{x}\in f$ and $\overline{y}\in g$ in $\overline{W}$ and $S\subseteq (e_{i-5}\setminus\{f_{\mathcal{P},e_{i-5}}\}\cup\{v\})e_{i-4}e_{i-3}$.
  By Lemma  \ref{spacial configuration2}, there is a vertex
  of $e_{i-3}\setminus e_{i-4},$ say $w,$ so that  $w\notin V(C_1)$. Moreover there are two subsets $W_1$ and $W_2$ of $\overline{W}$ with $|W_1|\geq 4$ and $|W_2|\geq 3$ so that for every distinct vertices $\overline{x}'\in W_1$ and $\overline{y}'\in W_2,$ the path
  $C_1'=((f\setminus \{\overline{x}\})\cup\{\overline{x}'\})((g\setminus \{\overline{y}\})\cup\{\overline{y}'\})$ is also a good $\varpi_{S}$-configuration in $\mathcal{H}_{\rm blue}$ with end vertices $\overline{x}'$ and $\overline{y}'$ in $\overline{W}$. Since $|W_1|\geq 4$ and $\ell'$ is maximum,
%
%
%  By Lemma \ref{our conjecture2}, each vertex of $\overline{W}$ with the exception of at most three can be considered as an end vertex of $C_1$.
% Since $\ell'$ is maximum,
%
 by symmetry,  we may
assume that   $y$ and $z_1$ are  the end vertices of $C_1$  in $\overline{W}$.
  If the edge $f=\{u_1,w,v_{4i-10},v_{4i-9},x\}$ is blue, then set
\begin{eqnarray*}
&&g_1=\{u_2,v_{4i-5},v_{4i-6},v_{4i-7},u_1\},\\
&&g_2=\{z_1,v_{4i-5},v_{4i-4},u_3,v_{4i-3}\}.
\end{eqnarray*}
First let  the edge $g_1$ is blue. Since there is no red copy of $\mathcal{C}_n^5$ and the edge $e$ is red, the cycle  $g_1f\mathcal{Q}C_1\{z_1,v_{4i-3},v_{4i-2},u_3,u_2\}$ is a blue copy of $\mathcal{C}_m^5.$
So we may  suppose that the edge
 $g_1$ is red.  Therefore,  the edge $g_2$ is blue ( otherwise, $g_1 g_2 e_i e_{i+1} \ldots e_{n-1} e_1 \ldots e_{i-2}$ is a red copy of $\mathcal{C}_n^5,$ a contradiction to our assumption). Again, since there is no red copy of $\mathcal{C}_n^5$ and the edge $e$ is red, the edge $g_3=\{v_{4i-3},v_{4i-2},v_{4i-1},u_2,u_1\}$ is blue (otherwise, $g_3ee_{i+1}\ldots e_{n-1}e_1\ldots e_{i-1}$ is a red copy of $\mathcal{C}_n^5,$ a contradiction) and
  $f\mathcal{Q}C_1g_2g_3$ is a blue copy of $\mathcal{C}_m^5$.
%\begin{eqnarray*}
%&&\mathcal{C}_1=\{u_2,v_{4i-5},v_{4i-6},v_{4i-7},u_1\}f\mathcal{Q}C_1\{z_1,v_{4i-3},v_{4i-2},u_3,u_2\},\\
%&&\mathcal{C}_2=f\mathcal{Q}C_1\{z_1,v_{4i-5},v_{4i-4},u_3,v_{4i-3}\}\{v_{4i-3},v_{4i-2},v_{4i-1},u_2,u_1\}.
%\end{eqnarray*}
%Clearly at least one of $\mathcal{C}_1$ or  $\mathcal{C}_2$ is a blue copy of $\mathcal{C}_m^5$.
Now suppose that the edge $f$ is red. Then the edge
  $f'=\{u_3,v_{4i-7},v_{4i-8},v_{4i-9},z_1\}$ is blue.
%  (otherwise, $ff'e_{i-1}e_{i}\ldots e_{n-1}e_1 \ldots e_{i-3}$ is a red copy of $\mathcal{C}_n^5,$ a contradiction).
  Let
 \begin{eqnarray*}
&&h_1=\{u_2,v_{4i-5},v_{4i-6},v_{4i-10},u_3\},\\
&&h_2=
\{x,u_1,v_{4i-5},v_{4i-4},v_{4i-3}\}.
\end{eqnarray*}
 If the edge $h_1$ is blue, then  $h_1f'
C_1\mathcal{Q}\{x,v_{4i-3},v_{4i-2},v_{4i-1},u_2\}$ is a blue copy of $\mathcal{C}_m^5.$ So suppose  that the edge $h_1$ is red. Therefore,  the edge $h_2$ is blue.
%(otherwise, $h_1h_2e_ie_{i+1}\ldots e_{n-1}e_1\ldots e_{i-2}$ is a red copy of $\mathcal{C}_n^5,$ a contradiction).
Again, since there is no red copy of $\mathcal{C}_n^5$ and the edge $e$ is red,   the cycle $f'C_1\mathcal{Q}
h_2\{v_{4i-3},v_{4i-2},v_{4i-1},u_2,u_3\}$ is a blue copy of $\mathcal{C}_m^5.$

%  and at least one of $\mathcal{C}_3$ or $\mathcal{C}_4$ is a blue copy of $\mathcal{C}_m^5$, where
%\begin{eqnarray*}
%&&\mathcal{C}_3=\{u_2,v_{4i-5},v_{4i-6},v_{4i-10},u_3\}g
%C_1\mathcal{Q}\{x,v_{4i-3},v_{4i-2},v_{4i-1},u_2\},\\
%&&\mathcal{C}_4=gC_1\mathcal{Q}
%\{x,u_1,v_{4i-5},v_{4i-4},v_{4i-3}\}\{v_{4i-3},v_{4i-2},v_{4i-1},u_2,u_3\}.
%\end{eqnarray*}
%\vspace{-0.9 cm}
% One can check that at least one of the
% $\mathcal{C}_i$'s, $1\leq i\leq 4,$ is a blue copy of $\mathcal{C}_m^5$.

\medskip
Now, let $m$ be even. Hence, we have  $\ell'=m-6$ and $r\geq 7$.  Let $ \overline{W} = \{x, y, z_1, z_2, u_1, u_2, u_3\}$ and $v\in e_{i-8}\setminus
e_{i-9}$ be a vertex so that $v\notin
 V(\mathcal{Q})$ (the existence of $v$ is guaranteed by Lemma \ref{there is a Pl}).
Using Lemma
\ref{spacial configuration2}, there is a good $\varpi_{S_1}$-configuration, say $C_1$, in $\mathcal{H}_{\rm blue}$ with end vertices in $\overline{W}$ so that $S_1\subseteq (e_{i-7}\setminus\{f_{\mathcal{P},e_{i-7}}\}\cup\{v\})e_{i-6}e_{i-5}$ and at least one of the vertices of $e_{i-5}\setminus e_{i-6},$ say $w,$ is not in $C_1$. By an argument similar to the case that $m$ is odd,
 we may assume that $y$ and $z_1$ are  the end vertices of $C_1$  in $\overline{W}$. Now set $\widetilde{W}=\overline{W}\setminus\{y\}=\{x,z_1,z_2,u_1,u_2,u_3\}.$ Again, using Lemma
\ref{spacial configuration2}, there is a good $\varpi_{S_2}$-configuration, say $C_2$, in $\mathcal{H}_{\rm blue}$ with end vertices in $\widetilde{W}$ so that $S_2\subseteq (e_{i-4}\setminus\{f_{\mathcal{P},e_{i-4}}\}\cup\{w\})e_{i-3}e_{i-2}$ and at least one of the vertices of $e_{i-2}\setminus e_{i-3},$ say $w',$ is not in $C_2$. By the properties of Lemma \ref{spacial configuration2} and since $\ell'$ is maximum, we may suppose  that $x$ and  $z_2$ or $z_2$ and $u_1$ are end vertices of $C_2$ in $\widetilde{W}.$
 %  there are  two blue configurations,
%say $C_1$ and $C_2$, between
%$((e_{i-7}\setminus\{f_{\mathcal{P},e_{i-7}}\})\cup\{w\})e_{i-6}\ldots
%e_{i-2}$ and $\overline{W}=T\cup \{x,y,z_1,z_2\}$ with end vertices in $W$
%where $w$ is a vertex of $e_{i-8}\setminus e_{i-9}$ so that
%$w\notin \mathcal{Q}$. Without loss of generality   we may assume
%that $\mathcal{Q}C_1C_2$ is one path or $\mathcal{Q}C_1$ and $C_2$
%are two blue paths with end vertices  in $\overline{W}$.
It is not difficult to show that in each cases
%, by analogous arguments to the
%above subcases, to show that
 there is a blue copy of
$\mathcal{C}^5_m$. Here, for
abbreviation, we omit the proof.\\

\item[III.] $|T|=2.$

\medskip
Let $T=\{u_1,u_2\}$. Since $\ell'=2\lfloor\frac{m-1}{2}\rfloor-2$,
for odd $m$ we have  $\ell'=m-3$ and $r\geq 2$.
By Lemma \ref{there is a Pl}, there is a vertex  $w\in e_{i-3}\setminus e_{i-4}$ so that
$w\notin V(\mathcal{Q}).$ If the edge $f=\{y,w,v_{4i-10},v_{4i-9},u_1\}$ is blue, then set
\begin{eqnarray*}
&&g_1=\{u_1,z_1,z_2,v_{4i-4},v_{4i-3}\},\\
&&g_2=\{u_1,v_{4i-7},v_{4i-6},v_{4i-5},u_2\}.
\end{eqnarray*}
 Since there is no red copy of $\mathcal{C}_n^5,$ at least one of the edges $g_1$ or $g_2$, say $g',$  is  blue  (otherwise, $g_2g_1 e_{i}e_{i+1}\ldots e_{n-1}e_1\ldots e_{i-2}$ is a red copy of $\mathcal{C}_n^5$). Now, since that edge $e$ is red, the cycle  $\mathcal{C}_1=\mathcal{Q}fg'\{v_{4i-3},v_{4i-2},v_{4i-1},u_2,x\}$  is a blue copy of $\mathcal{C}_m^5$.
%\begin{eqnarray*}
%&&\mathcal{C}_1=\mathcal{Q}fg'\{v_{4i-3},v_{4i-2},v_{4i-1},u_2,x\},\\
%&&\mathcal{C}_2=\mathcal{Q}f\{u_2,v_{4i-3},v_{4i-2},v_{4i-1},x\},
%\end{eqnarray*}
%where $g_1=\{u_1,z_1,z_2,v_{4i-4},v_{4i-3}\}$ and $g_2=\{u_1,v_{4i-7},v_{4i-6},v_{4i-5},u_2\}.$
 If the edge $f$ is red, then
 % since there is no red copy of $\mathcal{C}_n^5,$
  the edge
 $g=\{x,v_{4i-9},v_{4i-8},v_{4i-7},u_2\}$ is blue and at least one of $\mathcal{C}_3$ or $\mathcal{C}_4$ is a blue copy of $\mathcal{C}_m^5$, where
\begin{eqnarray*}
&&\mathcal{C}_3=\mathcal{Q}g\{u_2,u_1,v_{4i-1},v_{4i-2},v_{4i-3}\}\{v_{4i-3},v_{4i-4},v_{4i-5},z_1,y\},\\
&& \mathcal{C}_4=\mathcal{Q}g\{u_2,v_{4i-10},v_{4i-6},v_{4i-5},u_1\}\{u_1,v_{4i-3},v_{4i-2},v_{4i-1},y\}.
\end{eqnarray*}
% Obviously at least one of $\mathcal{C}_i$'s, $1\leq i\leq 4,$ is a  blue copy of $\mathcal{C}_m^5$.

Now, we may assume that $m$ is even. Consequently, $\ell'=m-4$ and
$r\geq 4$. Let $\overline{W}=\{x,y,z_1,z_2,u_1,u_2\}$ and $u$ be a vertex of $e_{i-5}\setminus e_{i-6}$ so
that $u\notin V(\mathcal{Q})$.
 Using Lemma \ref{spacial configuration2}, there is a good $\varpi_{S}$-configuration, say $C_1$, in $\mathcal{H}_{\rm blue}$  with end vertices in $\overline{W}$ so that $S\subseteq ((e_{i-4}\setminus
\{f_{\mathcal{P},e_{i-4}}\})\cup\{u\})e_{i-3}e_{i-2}$.
  Let $w\in
e_{i-2}\setminus e_{i-3}$ so that $w\notin C_1$. Since $\ell'$ is maximum, then $u_1,u_2$ or $u_j,x$ or $u_j,y$ for $1\leq j \leq 2$ can not be end vertices of $C_1.$
By symmetry  we may assume that one of the following  cases holds:\\

 {\it $(i)$  $y$ and $z_1$ are the end vertices of $C_1$ $($when $y,z_2$ or $x,z_1$ or $x,z_2$ are end vertices of $C_1$ the proof is similar to this case$)$.}

 \medskip
If the edge $f=\{u_1,v_{4i-5},v_{4i-6},w,x\}$ is blue, then $\mathcal{Q}C_1\{z_1,v_{4i-2},v_{4i-3},u_2,u_1\}f$ is a blue copy of $\mathcal{C}_m^5$. So we may suppose  that the edge $f$ is red. Since there is no red copy of $\mathcal{C}_n^5,$ the edge $f'=\{z_1,u_2,v_{4i-5},v_{4i-4},v_{4i-3}\}$ is blue and
\begin{eqnarray*}\mathcal{Q}C_1f'\{v_{4i-3},v_{4i-2},v_{4i-1},u_1,x\},
\end{eqnarray*}
  is a blue copy of  $\mathcal{C}_m^5.$\\
%\begin{eqnarray*}
%&&\mathcal{C}'=\mathcal{Q}C_1\{z_1,v_{4i-2},v_{4i-3},u_2,u_1\}\{u_1,v_{4i-5},v_{4i-6},w,x\},\\
%&&\mathcal{C}''=\mathcal{Q}C_1\{z_1,u_2,v_{4i-5},v_{4i-4},v_{4i-3}\}\{v_{4i-3},v_{4i-2},v_{4i-1},u_1,x\}.
%\end{eqnarray*}

% {\it (ii)  $y$ and $u_1$ are the  end vertices of $C_1$ (when $y,u_2$ or $x,u_1$ or $x,u_2$ are end vertices of $C_1$ the proof is similar to this case).}
%
% \medskip
%
%Set $f=\{u_1,w,v_{4i-6},v_{4i-5},u_2\}$. If the edge $f$ is blue, then
% \begin{eqnarray*}
%\mathcal{Q}C_1f\{u_2,v_{4i-3},v_{4i-2},v_{4i-1},x\},
%\end{eqnarray*}
% is a blue copy of $\mathcal{C}_m^5$. Otherwise, since there is no red copy of $\mathcal{C}_n^5,$  the edge $f'=\{v_{4i-3},v_{4i-4},v_{4i-5},z_1,x\}$ is blue and $\mathcal{Q}C_1\{u_1,u_2,v_{4i-1},v_{4i-2},v_{4i-3}\}f'$ is a blue copy of $\mathcal{C}_m^5$.\\

% \begin{eqnarray*}
%&&\mathcal{C}'=\mathcal{Q}C_1\{u_1,w,v_{4i-6},v_{4i-5},u_2\}\{u_2,v_{4i-3},v_{4i-2},v_{4i-1},x\},\\
%&&\mathcal{C}''=\mathcal{Q}C_1\{u_1,u_2,v_{4i-1},v_{4i-2},v_{4i-3}\}\{v_{4i-3},v_{4i-4},v_{4i-5},z_1,x\}.
%\end{eqnarray*}
%Clearly at least one of $\mathcal{C}'$ or $\mathcal{C}''$
%is a blue copy of  $\mathcal{C}_m^5$.\\

 {\it $(ii)$ $u_2$ and $z_1$  are the   end
vertices of $C_1$ $($when $u_2,z_2$ or $u_1,z_1$ or $u_1,z_2$ are end vertices of $C_1$ the proof is similar to this case$)$.}

\medskip
If the edge $f=\{y,w,v_{4i-6},v_{4i-5},u_2\}$ is blue, then $\mathcal{Q}fC_1\{z_1,v_{4i-3},v_{4i-2},u_1,x\}$ is a blue copy of $\mathcal{C}_m^5$. Otherwise, $\mathcal{Q}f'C_1\{u_2,z_2,v_{4i-2},v_{4i-3},y\}$ is our desired cycle, where $f'=\{x,v_{4i-5},v_{4i-4},v_{4i-1},z_1\}.$\\

%In this case, at least one of  $\mathcal{C}'$ or $\mathcal{C}''$
%is a blue copy of  $\mathcal{C}_m^5$, where
%\begin{eqnarray*}
%&&\mathcal{C}'=\mathcal{Q}\{y,w,v_{4i-6},v_{4i-5},u_2\}C_1\{z_1,v_{4i-3},v_{4i-2},u_1,x\},\\
%&&\mathcal{C}''=\mathcal{Q}\{x,v_{4i-5},v_{4i-4},v_{4i-1},z_1\}C_1\{u_2,z_2,v_{4i-2},v_{4i-3},y\}.
%\end{eqnarray*}

\item[IV.] $|T|=1.$

\medskip
Let $T=\{u_1\}$. One can easily check that
$\ell'=2\lfloor\frac{m-1}{2}\rfloor$. If $m$ is odd, then  $\ell'=m-1$.
Let $w$ be a vertex of $e_{i-1}\setminus e_{i-2}$ so that $w\notin
V(\mathcal{Q})$(the existence of $w$ is guaranteed by Lemma \ref{there is a Pl}). Since there is no red copy of $\mathcal{C}^5_n$ and the edge $e$ is red, the cycle $\mathcal{Q}\{y,u_1,v_{4i-1},w,x\}$
is a blue copy of $\mathcal{C}^5_m$.

Now, we may assume that $m$ is even. So
$\ell'=m-2$ and $r\geq 1$. Let $w$ be a vertex of $e_{i-2}\setminus
e_{i-3}$ so that $w\notin V(\mathcal{Q})$. Since there is no red copy of $\mathcal{C}^5_n$ at least one of the edges
\begin{eqnarray*}
&&g_1=\{y,w,v_{4i-6},v_{4i-5},u_1\},\\
&&g_2=\{x,v_{4i-5},v_{4i-4},z_1,v_{4i-3}\},
\end{eqnarray*}
is blue (if not, $g_1g_2 e_ie_{i+1}\ldots e_{n-1}e_1\ldots e_{i-2}$ form a red copy of $\mathcal{C}^5_n$). If the edge $g_1$ is blue, then
\begin{eqnarray*}
\mathcal{Q}g_1\{u_1,v_{4i-3},v_{4i-2},v_{4i-1},x\}
\end{eqnarray*}
is a blue copy of $\mathcal{C}^5_m.$ Otherwise,
\begin{eqnarray*}
\mathcal{Q}g_2\{v_{4i-3},v_{4i-2},v_{4i-1},
u_1,y\}
\end{eqnarray*}
 is a copy of $\mathcal{C}^5_m$ in
 $\mathcal{H}_{\rm blue}$.
%Now by simple arguments, similar to Subcase $1$, we have a blue
%copy of $\mathcal{C}^5_m$.
%

\item[V.] $|T|=0.$

\medskip
One can easily check that $\ell'=2\lfloor\frac{m-1}{2}\rfloor+2$.
Remove the last
two edges of $\mathcal{Q}$  to get two disjoint blue paths $\overline{\mathcal{Q}}$ and $\overline{\mathcal{Q}'}$ so that $\|\overline{\mathcal{Q}}\cup\overline{\mathcal{Q}'}\|=2\lfloor\frac{m-1}{2}\rfloor$ and
$(\overline{\mathcal{Q}}\cup \overline{\mathcal{Q}'})\cap((e_{i-3}\setminus\{f_{\mathcal{P},e_{i-3}}\})\cup e_{i-2}\cup e_{i-1})=\emptyset$. By an argument similar to the case $|T|=0$ of Subcase $1$ and the case $|T|=1$ of this subcase, we can find a blue copy of $\mathcal{C}^5_m.$

\end{itemize}

\bigskip
\noindent \textbf{Case 2. }For every edge
$e_i=\{v_{4i-3},v_{4i-2},v_{4i-1},v_{4i},v_{4i+1}\}$, $1\leq i\leq
n-1$, and every vertices $z_1,z_2\in W$ and $v'\in e_{i+1}\setminus\{l_{\mathcal{C},e_{i+1}}\}$ (also $v'\in e_{i-1}\setminus\{f_{\mathcal{C},e_{i-1}}\}$)
 the edge $\{v_{4i-1},v_{4i},v',z_1,z_2\}$
(also  the edge $\{v',v_{4i-2},v_{4i-1},z_1,z_2\}$) is blue.

\medskip
 Let $W=\{u_1,u_2,\ldots,u_{\lfloor\frac{m-1}{2}\rfloor+4}\}$.
First assume that $m=7$.
%For fixed $3\leq m\leq 7$,
 Set
\begin{eqnarray*}
h_i=\left\lbrace \begin{array}{ll}
(e_i\setminus \{v_{4i},v_{4i+1}\})\cup \{u_i,u_{i+1}\}
&\mbox{for $1\leq i \leq 6$},\vspace{.5 cm}\\
(e_i\setminus \{v_{4i},v_{4i+1}\})\cup \{u_i,u_{1}\} &\mbox{for $i=7$}.
\end{array}\right.
 \end{eqnarray*}
Since $h_i$'s, $1\leq i \leq 7$, are blue, then   $h_1h_2\ldots h_7$ is a blue copy of  $\mathcal{C}^5_7$. Therefore, we may suppose  that $m\geq 8$.
Let $\mathcal{P}=e_1e_2\ldots e_{n-3}$ and $W_0=W\setminus \{u_1\}.$
Clearly $|W_0|\geq 6$. Since there is no red copy
of $\mathcal{C}^5_n$, $\mathcal{P}$ is a maximal path w.r.t.
$W_0$. Use Lemma \ref{there is a Pl} to obtain   two
disjoint blue paths $\mathcal{Q}$ and $\mathcal{Q}'$
%with
%$V(\mathcal{Q})\cap V(\mathcal{Q}')=\emptyset$
% and $\|\mathcal{Q}\cup \mathcal{Q}'\|=\ell'$
 between
 $\overline{\mathcal{P}}$, the path obtained from $\mathcal{P}$ by deleting the last $r$ edges for
 some $r\geq 0$  and $W'\subseteq W_0$ with the mentioned properties.
 %Let   $\mathcal{Q}$ and $\mathcal{Q}'$ be such paths  so that $\ell'=\|\mathcal{Q}\cup \mathcal{Q}'\|$ is maximum.
   Consider the paths $\mathcal{Q}$ and $\mathcal{Q}'$ with $\|\mathcal{Q}\|\geq \|\mathcal{Q}'\|$ so that $\ell'=\|\mathcal{Q}\cup \mathcal{Q}'\|$ is maximum. Among these paths, choose $\mathcal{Q}$ and $\mathcal{Q}'$, where $\|\mathcal{Q}\|$ is maximum.   Since $\|\mathcal{P}\|=n-3,$ by Lemma \ref{there is a Pl}, we have $\ell'=\frac{2}{3}(n-3-r)$.\\

\noindent {\it Subcase $1$}. $\|\mathcal{Q}'\|\neq 0$.

\medskip
 %Now we may assume that  $\|\mathcal{Q}'\|\neq 0$.
\noindent  Set $T=W_0\setminus W'.$ Let  $x,y$ and $x',y'$ be the end
vertices of $\mathcal{Q}$ and $\mathcal{Q}'$ in $W'$,
respectively. Using Lemma \ref{there is a Pl} we have one of
the following cases:

\begin{itemize}

\medskip
\item[I.] $|T|\geq 3.$

\medskip
In this case we have $\ell'\leq 2\lfloor\frac{m-1}{2}\rfloor-4$ and so
 $r\geq 4$. Therefore,  this case
does not occur by Lemma \ref{there is a Pl}.

 \medskip
\item[II.] $|T|=2.$

\medskip
Let $T=\{u_2,u_3\}$.
In this case, $\ell'=2\lfloor\frac{m-1}{2}\rfloor-2$. If $m$ is even, then $\ell'=m-4$ and $r\geq 3$. It is impossible  by Lemma \ref{there is a Pl}. So we may assume that $m$ is odd. Therefore, $\ell'=m-3$ and $r\geq 1$. Based on our  assumptions,  the edges $h_1,h_2$ and $h_3$ are blue, where
\begin{eqnarray*}
&&h_1=(e_{n-3}\setminus \{v_{4(n-3)-3},v_{4(n-3)-2}\})\cup\{y,x'\},\\
&&h_2=(e_{n-2}\setminus \{v_{4(n-2)-3},v_{4(n-2)-2}\})\cup\{y',u_2\},\\
&&h_3=(e_{n-1}\setminus \{v_{4(n-1)},v_1\})\cup\{u_3,x\}.
\end{eqnarray*}
Thereby $\mathcal{Q}h_1\mathcal{Q}' h_2h_3$ is our desired blue $\mathcal{C}_m^5$.

\medskip
\item[III.] $|T|=1.$

\medskip
By symmetry  we may assume that  $T=\{u_2\}$. Clearly $\ell'=2\lfloor\frac{m-1}{2}\rfloor$. If
 $m$ is odd, then  $\ell'=m-1$. Remove the last two edges of $\mathcal{Q}\cup \mathcal{Q}'$ to get two disjoint blue  paths $\overline{\mathcal{Q}}$ and $\overline{\mathcal{Q}'}$ so that $V(\overline{\mathcal{Q}}\cup \overline{\mathcal{Q}'})\cap (e_{n-4}\cup e_{n-3})=\emptyset$. We can without loss of generality  assume that $\mathcal{Q}=\overline{\mathcal{Q}}$. First let $\|\overline{\mathcal{Q}'}\|>0$ and $x',y''$ with  $y''\neq y'$ be the end vertices of $\overline{\mathcal{Q}'}$ in $W'$. It is easy to check that $\mathcal{Q} h_1 \overline{\mathcal{Q}'} h_2 h_3$ is a blue copy of $\mathcal{C}_m^5$, where
 \begin{eqnarray*}
&&h_1=(e_{n-3}\setminus \{v_{4(n-3)-3},v_{4(n-3)-2}\})\cup\{y,x'\},\\
&&h_2=(e_{n-2}\setminus \{v_{4(n-2)-3},v_{4(n-2)-2}\})\cup\{y'',y'\},\\
&&h_3=(e_{n-1}\setminus \{v_{4(n-1)},v_1\})\cup\{u_2,x\}.
\end{eqnarray*}
So we may assume that $\|\overline{\mathcal{Q}'}\|=0$. Clearly $\mathcal{Q} h'_1  h'_2 h'_3$ is a blue copy of $\mathcal{C}_m^5$, where
\begin{eqnarray*}
&&h'_1=(e_{n-3}\setminus \{v_{4(n-3)-3},v_{4(n-3)-2}\})\cup\{y,x'\},\\
&&h'_2=(e_{n-2}\setminus \{v_{4(n-2)-3},v_{4(n-2)-2}\})\cup\{x',y'\},\\
&&h'_3=(e_{n-1}\setminus \{v_{4(n-1)},v_1\})\cup\{u_2,x\}.
\end{eqnarray*}

Now let $m$ be even. Therefore,  $\ell'=m-2$ and $r\geq 0.$
Let  $w$ be a vertex of $e_{n-3}\setminus e_{n-4}$ so
that $w\notin V(\mathcal{Q}\cup \mathcal{Q}')$ (the existence of $w$ is guaranteed by Lemma \ref{there is a Pl}). By the assumption,  the edges $h$ and $h'$ are blue, where
\begin{eqnarray*}
&&h=(e_{n-1}\setminus\{v_{4(n-1)},v_{1}\})\cup\{y',x\},\\
&&h'=(e_{n-2}\setminus\{v_{4(n-2)-3},v_{4(n-2)},v_{4(n-2)+1}\})\cup\{w,y,x'\}.
 \end{eqnarray*}
Thereby $\mathcal{Q}h'\mathcal{Q}' h$ is a blue copy of $\mathcal{C}_m^5$.
%\begin{eqnarray*}
%\mathcal{Q}h\mathcal{Q}' h'
%\end{eqnarray*}

\end{itemize}

\medskip
\noindent{\it Subcase $2$.}
 $\|\mathcal{Q}'\|=0$.

 \medskip
% First, assume that $\|\mathcal{Q}'\|=0$.
\noindent Let $x$ and $y$ be the
end vertices of $\mathcal{Q}$ in $W'$ and  $T=W_0\setminus W'$.  Using Lemma \ref{there is a Pl} we have the following
 cases:

\begin{itemize}

\medskip
\item[I.] $|T|\geq 4$.

\medskip
In this case, we have  $\ell'=2\lfloor\frac{m-1}{2}\rfloor-4$ and  $r\geq 4$.
 So this subcase
does not hold by Lemma \ref{there is a Pl}.

\medskip
\item[II.] $|T|=3.$

\medskip
Let  $T=\{u_2,u_3,u_4\}$. Clearly
$\ell'=2\lfloor\frac{m-1}{2}\rfloor-2$. First let $m$ be odd. Then
$\ell'=m-3$ and $r\geq 1$.  Set
\begin{eqnarray*}
&&h_1=(e_{n-2}\setminus \{v_{4(n-2)},v_{4(n-2)+1}\})\cup\{y,u_1\},\\
&&h_2=(e_{n-2}\setminus \{v_{4(n-2)-3},v_{4(n-2)-2}\})\cup\{u_2,u_3\},\\
&&h_3=(e_{n-1}\setminus \{v_{4(n-1)},v_1\})\cup\{u_4,x\}.
\end{eqnarray*}
%\vspace{-0.9 cm}
 Clearly  $\mathcal{Q}h_1h_2h_3$ is a blue copy of $\mathcal{C}_m^5$.

\medskip
Now, let $m$ be even. Hence $\ell'=m-4$ and $r\geq 3$.  It is easy  to see that $\mathcal{Q}h'_1h'_2h'_3h'_4$ is a blue copy of
$\mathcal{C}^5_m$, where
\begin{eqnarray*}
&&h'_1=(e_{n-4}\setminus \{v_{4(n-4)},v_{4(n-4)+1}\})\cup\{y,u_1\},\\
&&h'_2=(e_{n-3}\setminus \{v_{4(n-3)},v_{4(n-3)+1}\})\cup\{u_1,u_2\},\\
&&h'_3=(e_{n-2}\setminus \{v_{4(n-2)},v_{4(n-2)+1}\})\cup\{u_2,u_3\},\\
&&h'_4=(e_{n-1}\setminus \{v_{4(n-1)},v_1\})\cup\{u_3,x\}.
\end{eqnarray*}

\medskip
\item[III.] $|T|=2.$

\medskip
Let $T=\{u_2,u_3\}$. Clearly  $\ell'=2\lfloor\frac{m-1}{2}\rfloor$. If  $m$ is odd, then
 $\ell'=m-1$. By the assumption, the edge $h=(e_{n-1}\setminus \{v_{4(n-1)},v_1\})\cup\{x,y\}$ is blue and so $\mathcal{Q}h$ is a blue copy of $\mathcal{C}_m^5$.

Now, we may assume that $m$ is even. So $\ell'=m-2$. It is easy to see that $\mathcal{Q}h'h$
  is a copy of  $\mathcal{C}_m^5$ in $\mathcal{H}_{\rm blue}$, where
\begin{eqnarray*}
&&h=(e_{n-1}\setminus \{v_{4(n-1)},v_1\})\cup\{u_2,x\},\\
&&h'=(e_{n-2}\setminus \{v_{4(n-2)-3},v_{4(n-2)-2}\})\cup\{y,u_3\}.
\end{eqnarray*}

\end{itemize}

 }\end{proof}

%%%%%%%%%%%%%%%%%%%%%%%%%%%%%%%%%%%%%%%%%%%%%%%%%%%%%%%%%%%%%%%%%%%%%%%%%%%%%%%%%%%%%%%%%%%%%%%%%%%%%%
We shall use Theorem  \ref{R(C3,C4)} and Lemmas \ref{pm-1 implies pn-1}, \ref{Cn-1 implies small Cm} and
\ref{Cn-1 implies cm} to prove the following main theorem.
%%%%%%%%%%%%%%%%%%%%%%%%%%%%%%%%%%%%%%%%%%%%%%%%%%%%%%%%%%%%%%%%%%%%%%%%%%%%%%%%%%%%%%%%%%%%%%%%%%%%%%

\bigskip
\begin{theorem}\label{main theorem,k=5}
For every $n\geq \Big\lfloor\frac{3m}{2}\Big\rfloor$,
$$R(\mathcal{C}^5_n,\mathcal{C}^5_m)=4n+\Big\lfloor\frac{m-1}{2}\Big\rfloor.$$
\end{theorem}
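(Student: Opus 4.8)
The plan is to prove only the upper bound $R(\mathcal{C}^5_n,\mathcal{C}^5_m)\le 4n+\lfloor\frac{m-1}{2}\rfloor$, since the matching lower bound is supplied by Lemma~1 of \cite{subm}. So I would fix $N=4n+\lfloor\frac{m-1}{2}\rfloor$, take an arbitrary red--blue coloring of $\mathcal{K}^5_N$, and argue by contradiction under the assumption that there is \emph{no} red $\mathcal{C}^5_n$ and \emph{no} blue $\mathcal{C}^5_m$. The whole argument is a nested induction: an outer induction on $m$ whose base case $m=3$ is exactly Theorem~\ref{R(C3,C4)} (it gives $R(\mathcal{C}^5_n,\mathcal{C}^5_3)=4n+1=4n+\lfloor\frac{3-1}{2}\rfloor$ for all $n\ge\lfloor\frac{9}{2}\rfloor$), and, inside the step for a fixed $m\ge 4$, an inner induction on $n\ge\lfloor\frac{3m}{2}\rfloor$.

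For the base case of the inner induction, namely $n=\lfloor\frac{3m}{2}\rfloor$, I would first produce a blue $\mathcal{C}^5_{m-1}$. Since $N\ge 4n+\lfloor\frac{m-2}{2}\rfloor=R(\mathcal{C}^5_n,\mathcal{C}^5_{m-1})$ by the outer induction hypothesis (legitimate because $n\ge\lfloor\frac{3m}{2}\rfloor\ge\lfloor\frac{3(m-1)}{2}\rfloor$), and there is no red $\mathcal{C}^5_n$, the coloring must contain a blue $\mathcal{C}^5_{m-1}$. Now Lemma~\ref{pm-1 implies pn-1} applies verbatim, as it is stated for precisely $n=\lfloor\frac{3m}{2}\rfloor$, $m\ge 4$, host $\mathcal{K}^5_{4n+\lfloor\frac{m-1}{2}\rfloor}$, no blue $\mathcal{C}^5_m$, and a blue $\mathcal{C}^5_{m-1}$; it yields a red $\mathcal{C}^5_{n-1}$.

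For the inner induction step $n>\lfloor\frac{3m}{2}\rfloor$, I would instead use the inner hypothesis: here $n-1\ge\lfloor\frac{3m}{2}\rfloor$, so $R(\mathcal{C}^5_{n-1},\mathcal{C}^5_m)=4(n-1)+\lfloor\frac{m-1}{2}\rfloor<N$, and since there is no blue $\mathcal{C}^5_m$ this forces a red $\mathcal{C}^5_{n-1}$. In both cases I now hold a red $\mathcal{C}^5_{n-1}$ together with the standing assumption that no red $\mathcal{C}^5_n$ exists, so Lemma~\ref{Cn-1 implies small Cm} (when $4\le m\le 6$) or Lemma~\ref{Cn-1 implies cm} (when $m\ge 7$), each valid for all $n\ge\lfloor\frac{3m}{2}\rfloor$, produces a blue $\mathcal{C}^5_m$, contradicting the assumption. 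This closes both inductions and hence establishes the upper bound.

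The conceptual content is entirely absorbed into the preliminary lemmas, so the assembly above is routine once the bookkeeping is set up. The one delicate point worth flagging is that Lemma~\ref{pm-1 implies pn-1} is available only at the boundary value $n=\lfloor\frac{3m}{2}\rfloor$, so it can drive only the base case of the inner induction; the range $n>\lfloor\frac{3m}{2}\rfloor$ must be reduced to $n-1$ through the Ramsey number $R(\mathcal{C}^5_{n-1},\mathcal{C}^5_m)$ rather than through that lemma. Consequently the main thing to check carefully is the chain of floor inequalities $\lfloor\frac{m-1}{2}\rfloor\ge\lfloor\frac{m-2}{2}\rfloor$ and $\lfloor\frac{3m}{2}\rfloor\ge\lfloor\frac{3(m-1)}{2}\rfloor$, which guarantee that $N$ dominates each smaller Ramsey number invoked and that every induction hypothesis is genuinely in range.
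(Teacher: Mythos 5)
Your proposal is correct and takes essentially the same route as the paper: induction grounded in Theorem~\ref{R(C3,C4)} for $m=3$, the boundary case $n=\lfloor 3m/2\rfloor$ resolved by producing a blue $\mathcal{C}^5_{m-1}$ and applying Lemma~\ref{pm-1 implies pn-1}, and the case $n>\lfloor 3m/2\rfloor$ resolved by producing a red $\mathcal{C}^5_{n-1}$ and applying Lemma~\ref{Cn-1 implies small Cm} or Lemma~\ref{Cn-1 implies cm}. The only (harmless) difference is that at the boundary you invoke $R(\mathcal{C}^5_{n},\mathcal{C}^5_{m-1})$ to obtain the blue $\mathcal{C}^5_{m-1}$ directly, whereas the paper invokes $R(\mathcal{C}^5_{n-1},\mathcal{C}^5_{m-1})$ and separately disposes of the alternative outcome of a red $\mathcal{C}^5_{n-1}$ via the same concluding lemmas.
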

%%%%%%%%%%%%%%%%%%%%%%%%%%%%
%%%%%%%%%%%%%%%%%%%%%%%%%%%%
\begin{proof}{We give a proof by induction on $m+n$.  Using Theorem
\ref{R(C3,C4)} the statement of this theorem holds for $m=3.$  Let $m\geq 4,$ $n\geq \Big\lfloor\frac{3m}{2}\Big\rfloor$ and
$\mathcal{H}=\mathcal{K}^5_{4n+\lfloor\frac{m-1}{2}\rfloor}$ be 2-edge colored
red and blue with no red copy of
$\mathcal{C}^5_n$ and no blue copy of $\mathcal{C}^5_m$.  Consider
the following cases:

\bigskip
\noindent \textbf{Case 1. } $n=
\Big\lfloor\frac{3m}{2}\Big\rfloor$.\\
By induction hypothesis,
$$R(\mathcal{C}^5_{n-1},\mathcal{C}^5_{m-1})=
4(n-1)+\Big\lfloor\frac{m-2}{2}\Big\rfloor<
4n+\Big\lfloor\frac{m-1}{2}\Big\rfloor.$$ Therefore,  there is a copy of
$\mathcal{C}^5_{n-1}\subseteq \mathcal{H}_{\rm red}$ or a copy of
$\mathcal{C}^5_{m-1}\subseteq \mathcal{H}_{\rm blue}$. If we have
a red copy of $\mathcal{C}^5_{n-1}$, then by Lemma \ref{Cn-1 implies small Cm} or
 \ref{Cn-1
implies cm}  we have a copy of  $\mathcal{C}^5_{m}\subseteq \mathcal{H}_{\rm
blue}$. So, we may suppose that there is a blue copy of
$\mathcal{C}^5_{m-1}$.
%then Since $n-1= \Big\lfloor\frac{5(m-1)}{4}\Big\rfloor$,
 Lemma \ref{pm-1 implies pn-1} implies that $\mathcal{C}^5_{n-1}\subseteq
\mathcal{H}_{\rm red}$ and using Lemmas \ref{Cn-1 implies small Cm} and  \ref{Cn-1 implies cm} we
have $\mathcal{C}^5_{m}\subseteq \mathcal{H}_{\rm blue}$. This is a
contradiction.

\bigskip
\noindent \textbf{Case 2. }$n>
\Big\lfloor\frac{3m}{2}\Big\rfloor$.\\
 In this case, $n-1\geq \Big\lfloor\frac{3m}{2}\Big\rfloor.$ Since $$R(\mathcal{C}^5_{n-1},\mathcal{C}^5_{m})=
4(n-1)+\Big\lfloor\frac{m-1}{2}\Big\rfloor<
4n+\Big\lfloor\frac{m-1}{2}\Big\rfloor,$$
we have a copy of  $\mathcal{C}^5_{n-1}$ in $\mathcal{H}_{\rm red}$.
 Applying  Lemma \ref{Cn-1 implies small Cm} for $4\leq m \leq 6$  and  Lemma \ref{Cn-1 implies cm} for $m\geq 7$, we have a blue copy of
$\mathcal{C}^5_{m}$. This
contradiction  completes  the proof. }\end{proof}

\footnotesize

\end{document}